\renewcommand*\env@matrix[1][*\c@MaxMatrixCols c]{%
 \hskip -\arraycolsep
 \let\@ifnextchar\new@ifnextchar
 \array{#1}}
\newtheorem{coro}{Corollary}
\newtheorem{defi}{Definition}
\newtheorem{teo}{Theorem}
\newtheorem{pro}{Proposition}
\newtheorem{lemma}{Lemma}
\newtheorem{rem}{Remark}
\newcommand{\ii}{\operatorname{i}}
\renewcommand{\d}{\operatorname{d}}
\newcommand{\diag}{\operatorname{diag}}
\newcommand{\C}{\mathbb{C}}
\newcommand{\N}{\mathbb{N}}
\newcommand{\R}{\mathbb{R}}
\def\@settitle{\begin{center}%
  \baselineskip14\p@\relax
  \bfseries
  \uppercasenonmath\@title
  \@title
  \ifx\@subtitle\@empty\else
  \\[1ex]\uppercasenonmath\@subtitle
  \footnotesize\mdseries\@subtitle
  \fi
 \end{center}%
}
\def\subtitle#1{\gdef\@subtitle{#1}}
\def\@subtitle{}
\title[Matrix Biorthogonality: from Hermite to Painlevé]{Matrix Biorthogonal Polynomials: eigenvalue problems and non-Abelian discrete Painlevé equations.}
\author[A Branquinho]{Amilcar Branquinho$^\dag$}
\address{$^\dag$Departamento de Matemática,
Universidade de Coimbra, 3001-454 Coimbra, Portugal}
\email{ajplb@mat.uc.pt}
\author[A Foulquié]{Ana Foulquié Moreno$^\maltese$}
\address{$^\maltese$Departamento de Matemática, Universidade de Aveiro, 3810-193 Aveiro, Portugal}
\email{foulquie@ua.pt}
\author[M Mañas]{Manuel Mañas$^\ddag$}
\address{$^\ddag$Departamento de Física Teórica, Universidad Complutense de Madrid, 28040-Madrid, Spain}
\email{manuel.manas@ucm.es}
\thanks{$^\dag$Acknowledges Centro de Matem\'{a}tica da Universidade de Coimbra (CMUC) -- UID/MAT/00324/2013, funded by the Portuguese Government through FCT/MEC and co-funded by the European Regional Development Fund through the Partnership Agreement PT2020}
\thanks{$^\maltese$Acknowledges CIDMA Center for Research and Development in Mathematics and Applications (University of Aveiro) and the 
Portuguese Foundation for Science and Technology (FCT)
within project UID/MAT/04106/2013}
\thanks{$^\ddag$Thanks financial support from the Spanish ``Ministerio de Economía y Competitividad" research project [MTM2015-65888-C4-2-P], \emph{Ortogonalidad, teoría de la aproximación y aplicaciones en física matemática}}
\begin{document}

\maketitle

\begin{abstract}
In this paper we use the Riemann--Hilbert problem, with jumps supported on appropriate curves in the complex plane, for matrix biorthogonal polynomials and apply it to find Sylvester systems of differential equations for the orthogonal polynomials and its second kind functions as well. For this aim, Sylvester type differential Pearson equations for the matrix of weights are shown to be instrumental. Several applications are given, in order of increasing complexity. First, a general discussion of non-Abelian Hermite biorthogonal polynomials in the real line, understood as those whose matrix of weights is a solution of a Sylvester type Pearson equation with coefficients first order matrix polynomials, is given. All these is applied to the discussion of possible scenarios leading to eigenvalue problems for second order linear differential operators with matrix eigenvalues. Nonlinear matrix difference equations are discussed next. Firstly, for the general Hermite situation a general non linear relation (non trivial because the non commutativity features of the setting) for the recursion coefficients is gotten.
 In the next case of higher difficulty, degree two polynomials are allowed in the Pearson equation, but the discussion is simplified by considering only a left Pearson equation. In the case, the support of the measure is on an appropriate branch of an hyperbola. The recursion coefficients are shown to fulfill a non-Abelian extension of the alternate discrete Painlev\'e I equation. Finally, a discussion is given for the case of degree three polynomials as coefficients in the left Pearson equation characterizing the matrix of weights. However, for simplicity only odd polynomials are allowed. In this case, a new and more general matrix extension of the discrete Painlev\'e I equation is found.
\end{abstract}

\tableofcontents

\section{Introduction}

Matrix extensions of real orthogonal polynomials where first discussed back in 1949 by Krein~\cite{Krein1,Krein2} 
and thereafter were studied sporadically until the last decade of the~XX~century, being some relevant papers~\cite{bere,geronimo,nikishin}. 
Then, in 1984, Aptekarev and Nikishin, for a kind of discrete Sturm--Liouville operators, solved the corresponding scattering problem in~\cite{nikishin}, and found that the polynomials that satisfy a relation of the~form
\begin{align*}
xP_{k}(x)&=A_{k}P_{k+1}(x)+B_{k}P_{k}(x)+A_{k-1}^{*}P_{k-1}(x),& k&=0,1,\dots,
\end{align*}
are orthogonal with respect to a positive definite measure; i.e., they derived a matrix version of Favard's theorem.

In a period of 20 years, from 1990 to 2010, it was found that matrix orthogonal polynomials (MOP) satisfy, in some cases, properties as do the classical orthogonal polynomials.
Let us mention, for example, that for matrix versions of Laguerre, Hermite and Jacobi polynomials, i.e., the scalar-type Rodrigues' formula~\cite{duran20052,duran_1} and a second order differential equation~\cite{borrego,duran_3,duran2004} has been discussed. It also has been proven~\cite{duran_5} that operators of the form
$D$=${\partial}^{2}F_{2}(t)$+${\partial}^{1}F_{1}(t)$+${\partial}^{0}F_{0}$ have as eigenfunctions different infinite families of~MOP's. A new family of MOP's satisfying second order differential equations whose coefficients do not behave asymptotically as the identity
matrix was found in~\cite{borrego}; see also~\cite{cantero}.
We have studied~\cite{cum,carlos2} matrix extensions of the generalized polynomials studied in~\cite{adler-van-moerbeke,adler-vanmoerbeke-2}.
Recently, in~\cite{nuevo}, the Christoffel transformation to m,atrix orthogonal polynomials in the real line (MOPRL)
have extended to obtaining a new matrix Christoffel formula, and in~\cite{nuevo2,nuevo3} more general transformations --of Geronimus and Uvarov type-- where also considered.

It was 26 years ago, on 1992, when Fokas, Its and Kitaev, in the context of 2D quantum gravity, discovered that certain Riemann-Hilbert problem was solved in terms of orthogonal polynomials in the real line (OPRL),~\cite{FIK}. Namely, it was found that the solution of a $2\times 2$
Riemann--Hilbert problem can be expressed in terms of orthogonal polynomials in the real line and its Cauchy transforms. Later, Deift and Zhou combined these ideas with a non-linear steepest descent analysis in a series of papers~\cite{deift1,deift2,deift3,deift4} which was the seed for a large activity in the field. To mention just a few relevant results let us cite the study of strong asymptotic with applications in random matrix theory,~\cite{deift1,deift5}, the analysis of determinantal point processes~\cite{daems2,daems3,kuijlaars2,kuijlaars3}, orthogonal Laurent polynomials~\cite{McLaughlin1, McLaughlin2} and Painlevé equations~\cite{dai,kuijlaars4}. 

The study of equations for the recursion coefficients for OPRL or orthogonal polynomials in the unit circle   constitutes a subject of current interest. The question of how the form of the weight and its properties, for example to satisfy a Pearson type equation,
translates to the recursion coefficients has been treated in several places, for a review see~\cite{VAssche}. In~1976, Freud~\cite{freud0} studied weights in $\R$ of exponential variation $w(x)=|x|^\rho\exp(-|x|^m)$, $\rho>-1$ and $m>0$. For $m=2,4,6$ he constructed relations among them as well as determined its asymptotic behavior. However, Freud did not found the role of the discrete Painlevé I, that was discovered later by Magnus~\cite{magnus}. For the unit circle and a weight of the form $w(\theta)=\exp(k\cos\theta)$, $k\in\mathbb R$, Periwal and Shevitz~\cite{Periwal0,Periwal}, in the context of matrix models, found the discrete Painlevé~II equation for the recursion relations of the corresponding orthogonal polynomials. This result was rediscovered latter and connected with the Painlevé III equation~\cite{hisakado}.
In~\cite{baik0} the discrete Painlevé II was found using the Riemann--Hilbert problem given in~\cite{baik}, see also~\cite{tracy}. For a nice account of the relation of these discrete Painlevé equations and integrable systems see~\cite{cresswell}, and for a survey on the subject of differential and discrete Painlev\'e equations cf.~\cite{Clarkson_1}. We also mention the recent paper~\cite{clarkson} where
a discussion on the relationship between the recurrence coefficients of orthogonal polynomials with respect to a semiclassical Laguerre weight and classical solutions of the fourth Painlevé equation can be found. Also, in~\cite{Clarkson_2} the solution of the discrete alternate Painlev\'e equations is presented in terms of the Airy function.

In~\cite{CM} the Riemann--Hilbert problem
for this matrix situation and the appearance of non-Abelian discrete versions of Painlevé I were explored, showing singularity confinement~\cite{CMT}. The
singularity analysis for a matrix discrete version of the Painlevé I
equation was performed. It was found that the singularity confinement holds
generically, i.e. in the whole space of parameters except possibly for
algebraic subvarieties. The situation was considered in~\cite{Cassatella_3} for the matrix extension of the Szeg\H{o} polynomials in the unit circle and corresponding non-Abelian versions discrete Painlevé II equations 
For an alternative discussion of the use of Riemann--Hilbert problem for MOPRL see~\cite{GIM}.

Let us mention that in~\cite{miranian,miranian2} and~\cite{Cafasso} the MOP are expressed in terms of Schur complements that play the role of determinants in the standard scalar case. In~\cite{Cafasso} an study of matrix Szegő polynomials and the relation with a non Abelian Ablowitz--Ladik lattice is carried out, and in~\cite{carlos} the CMV ordering is applied to study orthogonal Laurent polynomials in the circle.

In this work we obtain Sylvester systems of differential equations for the orthogonal polynomials and its second kind functions, directly from a Riemann--Hilbert problem, with jumps supported on appropriate curves in the complex plane. The differential properties for the weight function are fundamental. In this case we consider a Sylvester type differential Pearson equation for the matrix of weights.
We also study whenever the orthogonal polynomials and its second kind functions are solutions of a second order linear differential operators with matrix eigenvalues. This is done by stating an appropriate boundary value problem for the matrix of weights.
In particular, special attention is paid to non-Abelian Hermite biorthogonal polynomials in the real line, understood as those whose matrix of weights is a solution of a Sylvester type Pearson equation with given first order matrix polynomials coefficients.

Several applications are given, in order of increasing complexity, as well. First, we return to the non-Abelian Hermite biorthogonal polynomials in the real line, and give nonlinear matrix difference equations for the recurrent coefficients of the non-Abelian Hermite biorthogonal polynomials. 
Next, we consider the orthogonal polynomials and functions of second kind associated with matrix of weights, that satisfy a differential matrix Person equation with degree two polynomials as coefficients. To simplify the discussion, only a left Pearson equation is considered. In this case, the support of the measure belongs to an appropriate branch of an hyperbola, and the recursion coefficients are shown to fulfill a non-Abelian extension of the scalar alternate discrete Painlevé I equation. 
Finally, a discussion is given for the case of degree three polynomials as coefficients in the left Pearson equation characterizing the matrix of weights. However, for simplicity only odd polynomials are allowed. In this case, a new and more general matrix extension of the discrete Painlevé equation is found. To end this study we present a comparison with the results already obtained by several authors in the scalar and matrix cases.

The layout of the paper is as follows.  In \S~\ref{sec:2} we introduce the basic objects and results fundamental to the rest of the work. Then, \S~\ref{sec:3} is devoted to study the interplay between 
fundamental matrices with constant jump and structure formulas. In \S~\ref{sec:4} and~\ref{sec:5} we characterize sequences of orthogonal polynomials whose matrix weight satisfy a Pearson--Sylvester matrix differential equation by means of a Sylvester matrix differential system and a second order differential operator. Finally, in \S~\ref{sec:6} we show how to derive Painlevé equations for the matrix recurrence coefficients of orthogonal polynomial sequences associated with matrix weight functions of ``exponential'' type.

\section{Riemann--Hilbert problem for Matrix Biorthogonal Polynomials} \label{sec:2}

\subsection{Matrix biorthogonal polynomials }
Let 
\begin{gather*} 
 W = \begin{bmatrix} 
 W^{(1,1)} & \cdots & W^{(1,N)} \\
\vdots & \ddots & \vdots \\
 W^{(N,1)} & \cdots & W^{(N,N)} 
\end{bmatrix}\in \C^{ N\times N}
\end{gather*} 
be a $N \times N$ matrix of weights with support on a smooth oriented non self-intersecting curve~$\gamma$ in the complex plane $\C$, i.e.
 $ W^{(j,k)} $ 
is, for each $j,k \in \{ 1, \ldots , N \}$, a complex weight with support on $\gamma$. 
We define the \emph{moment of order $n $} associated with $ W$ as 
\begin{align*} 
 W_n &= \frac{1} {2\pi\ii} \int_\gamma z^n W (z) \d z, & n& \in \N 
 :
 =\{0,1,2,\dots\}.
\end{align*}
We say that $ W$ is \emph{regular} if
 $\det \big[ W_{j+k} \big]_{j,k=0, \ldots n}\not = 0 $, $n \in \mathbb N $.
In this way, we define a \emph{sequence of matrix monic polynomials},
 $\big\{ {P}_n^{\mathsf L} (z)\big\}_{n\in\N} $, \emph{left orthogonal} and \emph{right orthogonal}, $\big\{ P_n^{\mathsf R}(z)\big\}_{n\in\N} $ with respect to a regular matrix measure $ W$, by the conditions,
\begin{align} \label{eq:ortogonalidadL}
 \frac 1 {2\pi\ii} \int_\gamma {P}_n^{\mathsf L} (z) W (z) z^k \d z
 = \delta_{n,k} C_n^{-1} , \\
 \label{eq:ortogonalidadR}
 \frac 1 {2\pi\ii} \int_\gamma z^k W (z) {P}_n^{\mathsf R} (z) \d z
 = \delta_{n,k} C_n^{-1} , 
\end{align}
for $k = 0, 1, \ldots , n $ and $n \in \mathbb N$,
where $C_n $ is an nonsingular matrix.

Notice that neither the matrix of weights is requested to be Hermitian 
nor the curve~$\gamma$ to be the real line, i.e., we are dealing, in principle with nonstandard orthogonality and, consequently, with biorthogonal matrix polynomials instead of orthogonal matrix polynomials.

The matrix of weights induce a sesquilinear form in the set of matrix 
polynomials $\C^{N\times N}[z]$ given by
\begin{align}\label{eq:sesquilinear}
\langle P,Q\rangle_ W :=\frac 1 {2\pi\ii} \int_\gamma {P} (z) W (z) Q(z) \d z.
\end{align}
Moreover, we say that $\big\{ P_n^{\mathsf L}(z)\big\}_{n\in\N}$ and $\big\{ P_n^{\mathsf R}(z)\big\}_{n\in\N}$ are biorthogonal with respect to a matrix weight functions $ W$ if
\begin{align} \label{eq:biorthogonality}
\big\langle P_n^{\mathsf L}, {P}_m^{\mathsf R} \big\rangle_ W
& = \delta_{n,m} C_n^{-1}, &\ n , m &\in \mathbb N .
\end{align}
 As the polynomials are chosen to be monic, we can write
\begin{align*}
 {P}_n^{\mathsf L} (z) = I z^n + p_{\mathsf L,n}^1 z^{n-1} + p_{\mathsf L,n}^2 z^{n-2} + \cdots + p_{\mathsf L,n}^n , 
 \\
 {P}_n^{\mathsf R} (z) = I z^n + p_{\mathsf R,n}^1 z^{n-1} + p_{\mathsf R,n}^2 z^{n-2} + \cdots + p_{\mathsf R,n}^n , 
\end{align*}
with matrix coefficients $p_{\mathsf L, n}^k , p_{\mathsf R, n}^k \in 
\C^{N\times N} $, $k = 0, \ldots, n $ and $n \in \mathbb N $ (imposing that $p_{\mathsf L,n}^0 = p_{\mathsf R,n}^0=I $, $n \in \mathbb N $). Here $I\in\C^{N\times N}$ denotes the identity matrix.

\subsection{Three term relations}
From \eqref{eq:ortogonalidadL} we deduce that the Fourier coefficients of the expansion
\begin{gather*}
z {P}^{\mathsf L}_n (z) = \sum_{k=0}^{n+1} \ell_{\mathsf L,k}^n {P}^{\mathsf L}_k (z) ,
\end{gather*}
are given by $\ell_{\mathsf L,k}^n = 0_N $, $k = 0 , 1, \ldots , n-2 $ (here we denote the zero matrix by $0_N$),
 $\ell_{\mathsf L,n-1}^n = C_n^{-1} C_{n-1} $ (is a direct consequence of orthogonality conditions), $\ell_{\mathsf L,n+1}^n = I \,$ (as $ {P}^{\mathsf L}_n (z) $ are monic polynomials) and 
 $\ell_{\mathsf L,n}^n = 
p_{\mathsf L,n}^1 - p_{\mathsf L,n+1}^1= 
 :
\beta^\mathsf L_n $
(by comparison of the coefficients, assuming $C_0 = I$). 

Hence, assuming the orthogonality relations~\eqref{eq:ortogonalidadL}, we conclude that the sequence of monic polynomials $\big\{ {P}^{\mathsf L}_n (z)\big\}_{n\in\N} $ is defined by the three term recurrence relations
\begin{align} \label{eq:ttrr}
z {P}^{\mathsf L}_n (z) &= {P}^{\mathsf L}_{n+1} (z) + \beta^{\mathsf L}_n {P}^{\mathsf L}_{n} (z) + \gamma^{\mathsf L}_n {P}^{\mathsf L}_{n-1} (z),& \ n &\in \mathbb N, 
\end{align}
with recursion coefficients 
\begin{align*}
\beta^\mathsf L_n& :=
p_{\mathsf L,n}^1 - p_{\mathsf L,n+1}^1, &
\gamma^{\mathsf L}_n 
 &:= C_n^{-1} C_{n-1},
\end{align*}
 with initial conditions, $ {P}^{\mathsf L}_{-1} = 0_N$ and 
 $ {P}^{\mathsf L}_{0} = I $.

Any sequence of monic matrix polynomials, $\big\{ P^{\mathsf R}_{n}(z) \big\}_{n\in\N} $, with $\deg P^{\mathsf R}_n= n $, biorthogonal with respect to $\big\{ P_n^{\mathsf L} (z)\big\}_{n\in\N}$ and $ W(z)$, i.e.~\eqref {eq:biorthogonality}
is fulfilled, also satisfies a three term relation. To prove this we compute the Fourier coefficients of $zP^{\mathsf R}_m (z)$ in the expansion
\begin{align*}
zP^{\mathsf R}_n (z) &= \sum_{k= 0}^{n+1} P^{\mathsf R}_k (z) \ell^n_{\mathsf R,k}, &
 \ell_{\mathsf R,k}^n &=
\frac 1 {2 \pi \ii} \int_\gamma zP^{\mathsf L}_k (z) W(z) P^{\mathsf R}_{n} (z) \d z .
\end{align*}
From~\eqref{eq:ortogonalidadL} we have
$ \ell_{\mathsf R,n+1}^n = I $, $\ell_{\mathsf R,n}^n = C_n \beta^{\mathsf L}_n C_n^{-1} $, $\ell_{\mathsf R,n-1}^n = C_{n-1}C_n^{-1} $, and $\ell_{\mathsf R,k}^n = 0_N$, $k = 0, \dots, n-2$, i.e.
the sequence of monic polynomials $\big\{ P^{\mathsf R}_n(z) \big\}_{n\in\N}$
satisfies
\begin{align} \label{eq:rightttrr}
P^{\mathsf R}_{-1}& = 0_N, &
 P^{\mathsf R}_{0} &= I , & 
 z P^{\mathsf R}_n (z) &= P^{\mathsf R}_{n+1} (z) + P^{\mathsf R}_{n} (z) \beta^{\mathsf R}_n + P^{\mathsf R}_{n-1} (z) \gamma^{\mathsf R}_n ,& n &\in\N,
\end{align}
where 
\begin{align*}
\beta_n^{\mathsf R} &:= C_n \beta^{\mathsf L}_n C_n^{-1}, &\gamma^{\mathsf R}_n &:= C_n \gamma^{\mathsf L}_nC_n^{-1} = C_{n-1} C_n^{-1},
\end{align*}
and the orthogonality conditions~\eqref{eq:ortogonalidadR} are satisfied.

\subsection{Second kind functions}

We define the \emph{sequence of second kind matrix functions} by
\begin{align} \label{eq:secondkind}
Q^{\mathsf L}_n (z) &
 :
= \frac1{ 2 \pi \ii} \int_\gamma \frac{P^{\mathsf L}_n (z')}{z'-z} { W (z')} \d z^\prime , 
 \\ 
\label{eq:right_secondkind}
{Q}_n^{\mathsf R} (z) &
 :
= \frac{1}{2\pi \ii} \int_\gamma W (z') \frac{P^{\mathsf R}_{n} (z')}{z'-z} \d z' ,
\end{align} 
for $ n \in \mathbb N$.
From the orthogonality conditions \eqref{eq:ortogonalidadL} and \eqref{eq:ortogonalidadR} we have, for all $n \in \N$, the following asymptotic expansion near infinity for the sequence of functions of the second~kind 
\begin{align} 
 \label{eq:secondkindlaurent}
Q^{\mathsf L}_n (z) 
& = - C_n^{-1} \big( I z^{-n-1} + q_{\mathsf L,n}^1 z^{-n-2} + \cdots \big), 
   \\
  \label{eq:secondkindlaurentR}
Q^{\mathsf R}_n (z) 
& = - \big( I z^{-n-1} + q_{\mathsf R,n}^1 z^{-n-2} + \cdots \big)C_n^{-1} . 
\end{align}
Assuming that the measures $ W^{(j,k)} $, $j,k \in \{ 1, \ldots , N \} $ are Hölder continuous we obtain, by the Plemelj's formula applied to~\eqref{eq:secondkind} and~\eqref{eq:right_secondkind}, the following fundamental jump identities
\begin{align} \label{eq:inverseformula}
\big( Q^{\mathsf L}_n (z) \big)_+ - \big( Q_n (z)^{\mathsf L} \big)_- &= {P}^{\mathsf L}_n (z) W (z), 
  \\
\label{eq:inverseformulaR}
\big( Q^{\mathsf R}_n (z) \big)_+ - \big( Q^{\mathsf R}_n (z) \big)_- &= W (z){P}^{\mathsf R}_n (z) ,
\end{align}
$z\in\gamma$, where, 
$ \big( f(z) \big)_{\pm} = \lim\limits_{\epsilon \to 0^{\pm}} f(z + i \epsilon ) $; 
here $\pm$ indicates the the positive/negative region according to the orientation of the curve $\gamma$.

Now, multiplying this equation on the right by $ W$ and integrating we get, using the definition~\eqref{eq:secondkind} of $\big\{ Q^{\mathsf L}_n (z)\big\}_{n\in\N}$, that
\begin{gather*}
\frac{1} {2\pi\ii} \int_\gamma \frac{z' {P}^{\mathsf L}_n (z')} {z'-z} { W (z')} \d z' = Q^{\mathsf L}_{n+1} (z) + \beta^{\mathsf L}_n Q^{\mathsf L}_{n} (z) + C_n^{-1} C_{n-1} Q^{\mathsf L}_{n-1} (z).
\end{gather*}
As $\frac{z'}{z'-z} = 1 + \frac{z}{z'-z} $, from the orthogonality conditons \eqref{eq:ortogonalidadL} we conclude that
\begin{align*}
z Q^{\mathsf L}_n (z) & = Q^{\mathsf L}_{n+1} (z) 
+ \beta^{\mathsf L}_n Q^{\mathsf L}_{n} (z) + C_n^{-1} C_{n-1} Q^{\mathsf L}_{n-1} (z) , & n \in \mathbb N ,
\end{align*}
with initial conditions 
\begin{gather*} 
Q^{\mathsf L}_{-1} (z) = Q^{\mathsf R}_{-1} (z) = - C_{-1}^{-1} 
\quad \mbox{and} \quad
Q^{\mathsf L}_0 (z)=Q^{\mathsf R}_0(z)=S_ W(z):= \frac{1}{2\pi\ii}\int_{\gamma}\frac{ W(z')}{z'-z}\d z' , 
\end{gather*}
where $S_ W(z)$ is the 
Stieltjes--Markov 
transformation of the matrix of weights $ W$, which is a complex measure of orthogonality for $\big\{ {P}^{\mathsf L}_n(z) \big\}_{n\in\N} $ --direct consequence of Fubini theorem and Cauchy integral formula.
It can be seen that
\begin{gather*}
 {P}^{\mathsf L}_n (z) Q_0 (z) = -\frac{1} {2\pi\ii} \int \frac{ {P}^{\mathsf L}_n (z') - {P}^{\mathsf L}_n (z)} {z'-z} { W (z') } \operatorname{d} z' +
\frac{1} {2\pi\ii} \int \frac{{P}^{\mathsf L}_n (z')}{z'-z} { W (z') } \d z',
\end{gather*}
i.e. we have the Hermite--Padé formula for the left orthogonal polynomials,
\begin{align*}
 {P}^{\mathsf L}_n (z) S_ W(z) + {P}_{n-1}^{\mathsf L,(1)} (z)& =Q^{\mathsf L}_n (z) , & n \in \mathbb N ,
\end{align*} 
where
\begin{align*} 
{P}_{n-1}^{\mathsf L,(1)} (z) & = \frac{1} {2\pi\ii} \int \frac{ P^{\mathsf L}_n (z') - P^{\mathsf L}_n (z)} {z'-z} { W (z') } \operatorname{d} w , & n \in \mathbb N ,
\end{align*} 
is a polynomial of degree at most $n-1 $ said to be the \emph{first kind associated polynomial with respect to 
$\big\{ P^{\mathsf L}_n (z) \big\}_{n\in\N} $ and $ W(z)$}.
Similarly, for the right situation we have the associated 
\begin{align*}
P_n^{\mathsf R,(1)} (z)& = \frac{1}{2\pi i} \int_\gamma W (z')\frac{ P^{\mathsf R}_{n+1} (z') - P^{\mathsf R}_{n+1} (z) }{z'-z}
\d w, & n &\in\N ,
\end{align*}
and the corresponding Hermite--Padé formula for the right orthogonal polynomials,
\begin{align*}
 S_ W(z){P}^{\mathsf R}_n (z) + {P}_{n-1}^{\mathsf R,(1)} (z)& = Q^{\mathsf R}_n (z) & n \in \mathbb N .
\end{align*}

\subsection{Reductions: from biorthogonality to orthogonality}

We consider two possible reductions for the matrix of weights, the symmetric reduction and the Hermitian reduction.

\begin{enumerate}
 \item A matrix of weights $W(z)$ with support on $\gamma$ is said to be symmetric if
\begin{align*}
(W(z))^\top&=W(z),& z&\in\gamma.
\end{align*}
\item A matrix of weights $W(x)$ with support on $\R$ is said to be Hermitian if
\begin{align*}
(W(x))^\dagger&=W(x),& x&\in\R.
\end{align*}
 \end{enumerate}

These two reductions leads to orthogonal polynomials, as the two biorthogonal families are identified; i.e., for the symmetric case
\begin{align*}
P_n^\mathsf R (z)&=\big(P_n^\mathsf L (z)\big)^\top, &
Q_n^\mathsf R (z)&=\big(Q_n^\mathsf L (z)\big)^\top,& z&\in\C,
\end{align*}
and for the Hermitian case, with $\gamma=\R$,
\begin{align*}
P_n^\mathsf R (z)&=\big(P_n^\mathsf L (\bar z)\big)^\dagger, &
Q_n^\mathsf R (z)&=\big(Q_n^\mathsf L (\bar z)\big)^\dagger,& z&\in\C.
\end{align*}

In both cases biorthogonality collapses into orthogonality, that
for the symmetric case reads~as
\begin{align*} 
 \frac{1}{2\pi\ii}\int_{\gamma} P_n(z) W(z)\big({P}_m(z)\big)^\top\d z
 & = \delta_{n,m} C_n^{-1}, &\ n , m &\in \mathbb N,
\end{align*}
while for the Hermitian case can be written as follows
\begin{align*} 
 \frac{1}{2\pi\ii}\int_{\R} P_n(
x) W(x)\big({P}_m(x)\big)^\dagger\d x
 & = \delta_{n,m} C_n^{-1}, &\ n , m &\in \mathbb N,
\end{align*}
where $P_n=P_n^\mathsf L $.

\subsection{Fundamental and transfer matrices vs Riemann--Hilbert problems}

We can summarize the left three term relation as follows
\begin{gather*}
\begin{bmatrix}
{P}^{\mathsf L}_{n+1} (z) & Q^{\mathsf L}_{n+1} (z) \\[.05cm]
-C_{n} {P}^{\mathsf L}_n (z) & -C_{n} Q^{\mathsf L}_n (z)
\end{bmatrix}
=
\begin{bmatrix}
z I - \beta^{\mathsf L}_n & C_{n}^{-1} \\[.05cm]
-C_{n} & {0}_N
\end{bmatrix}
\begin{bmatrix}
{P}^{\mathsf L}_{n} (z) & Q^{\mathsf L}_{n} (z) \\[.05cm]
-C_{n-1} {P}^{\mathsf L}_{n-1} (z) & -C_{n-1} Q^{\mathsf L}_{n-1} (z)
\end{bmatrix};
\end{gather*} 
and 
\begin{gather*}
\begin{bmatrix}
{P}_{n}^{{\mathsf L},(1)} (z) \\[.05cm]
-C_{n} {P}_{n-1}^{{\mathsf L},(1)} (z)
\end{bmatrix}
=
\begin{bmatrix}
z I - \beta^{\mathsf L}_n & C_{n}^{-1} \\[.05cm]
-C_{n} & {0}_N
\end{bmatrix}
\begin{bmatrix}
{P}_{n-1}^{{\mathsf L},(1)} (z) \\[.05cm]
-C_{n-1} {P}_{n-2}^{{\mathsf L},(1)} (z)
\end{bmatrix}.
\end{gather*}
In terms of the left \emph{fundamental matrix} $Y_n^{\mathsf L}(z)$ and the left \emph{transfer matrix} $T_n^{\mathsf L}(z)$,
\begin{align*} 
 Y^{\mathsf L}_n (z) & 
 :
= 
\begin{bmatrix}
 {P}^{\mathsf L}_{n} (z) & Q^{\mathsf L}_{n} (z) \\[.05cm]
-C_{n-1} {P}^{\mathsf L}_{n-1} (z) & -C_{n-1} Q^{\mathsf L}_{n-1} (z)
\end{bmatrix}
, &
 T^{\mathsf L}_n (z)&
 :
= 
\begin{bmatrix}
z I - \beta_n^{\mathsf L}& C_{n}^{-1} \\[.05cm]
-C_{n} & {0}_N
\end{bmatrix},
\end{align*}
we rewrite the above identities as follows
\begin{align*}
Y_{n+1}^{\mathsf L} (z) &= T^{\mathsf L}_n (z) Y^{\mathsf L}_n (z) , & n &\in \mathbb N .
\end{align*}
From these we see that
 $\det Y^{\mathsf L}_n (z) = \det Y^{\mathsf L}_{0} (z) = 1 $, as $\det T^{\mathsf L}_n = 1 $, $n \in \mathbb N $.

For the right orthogonality, we similarly obtain from~\eqref{eq:rightttrr} that
\begin{align*}
\begin{bmatrix}
P^{\mathsf R}_{n+1} (z) & - P^{\mathsf R}_{n} (z) C_n \\[.05cm]
{Q}^{\mathsf R}_{n+1} (z) & - {Q}^{\mathsf R}_n (z) C_n
\end{bmatrix}
=
\begin{bmatrix}
P^{\mathsf R}_{n} (z) & - P^{\mathsf R}_{n-1} (z) C_{n-1} \\[.05cm]
{Q}^{\mathsf R}_{n} (z) & - {Q}^{\mathsf R}_{n-1} (z) C_{n-1}
\end{bmatrix}
\begin{bmatrix}
z I -\beta_n^{\mathsf R} & - C_{n} \\[.05cm]
C_{n}^{-1} & {0}_N
\end{bmatrix}
\end{align*}
and also
\begin{gather*}
\begin{bmatrix}
P^{\mathsf R,(1)}_{n} (z) & - P^{\mathsf R,(1)}_{n-1}(z) C_{n}
\end{bmatrix}
=
\begin{bmatrix}
P^{\mathsf R,(1)}_{n-1} (z) & - P^{\mathsf R,(1)}_{n-2}(z) C_{n}
\end{bmatrix}
\begin{bmatrix}
z I - \beta^{\mathsf R}_n & -C_{n} \\[.05cm]
C_{n}^{-1} & 0_N
\end{bmatrix}
\end{gather*}
as we have the Hermite-Pad\'e formula for the right orthogonal polynomials,
\begin{gather*}
{Q}^{\mathsf R}_0 (z) \, P^{\mathsf R}_m (z) + {P}_{m-1}^{{\mathsf R},(1)} (z) = Q^{\mathsf R}_{m} (z)
\, .
\end{gather*}
Taking the right versions of \emph{fundamental matrix} $Y_n^{\mathsf R}(z)$ and \emph{transfer matrix} $T_n^{\mathsf R}(z)$,
\begin{align*} 
{Y}^{\mathsf R}_n (z) &
 : 
= 
\begin{bmatrix}
P^{\mathsf R}_{n} (z) & - P^{\mathsf R}_{n-1} (z) C_{n-1} \\[.05cm]
{Q}^{\mathsf R}_{n} (z) & - {Q}^{\mathsf R}_{n-1} (z) C_{n-1}
\end{bmatrix}, &
{T}^{\mathsf R}_m (z) &
 :
=
\begin{bmatrix}
z I - \beta^{\mathsf R}_n & - C_{n} \\[.05cm]
C_{n}^{-1} & 0_N
\end{bmatrix},
\end{align*}
we see that
$\det {Y}^{\mathsf R}_n (z) = \det \, Y^{\mathsf R}_{0} (z) = 1$, because
$\det {T}^{\mathsf R}_n= 1$, $n \in \N$.

Note that,
\begin{align*}
T^{\mathsf R}_n(z)
 & =
\begin{bmatrix}
C_n & 0_N \\ 
0_N & - C_{n}^{-1}
\end{bmatrix}
{T}^{\mathsf L}_n (z) 
\begin{bmatrix}
C_n & 0_N \\ 
0_N & - C_{n}^{-1}
\end{bmatrix}^{-1} , & n \in \mathbb N .
\end{align*}
As a conclusion we arrive to the following left Riemann--Hilbert problem.
\begin{teo} \label{teo:LRHP}
The matrix function
 $ Y_n^{\mathsf L}(z) $ 
is, for each $n \in \N$, the unique solution of the Riemann--Hilbert problem; which consists in the determination of a $2N \times 2 N$ complex matrix function such~that:

\hangindent=.9cm \hangafter=1
{\noindent}$\phantom{ol}${\rm (RH1):} $ Y_n^{\mathsf L} (z)$ is holomorphic in $\C \setminus \gamma $;

\hangindent=.9cm \hangafter=1
{\noindent}$\phantom{ol}${\rm (RH2):} has the following asymptotic behavior near infinity,
\begin{gather*} 
Y_n^{\mathsf L} (z) = \big( I + \operatorname{O} (z^{-1}) \big)
\begin{bmatrix}
I z^n & {0}_N \\
{0}_N & I z^{-n} 
\end{bmatrix} ;
\end{gather*}

\hangindent=.9cm \hangafter=1
{\noindent}$\phantom{ol}${\rm (RH3):} satisfies the jump condition
\begin{align*} 
 \big( Y^{\mathsf L}_n (z) \big)_+ &= \big( Y^{\mathsf L}_n (z) \big)_- \,
\begin{bmatrix}
I & W (z) \\ 
{0}_N & I 
\end{bmatrix}, &z &\in \gamma .
\end{align*} 
\end{teo}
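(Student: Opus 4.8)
The plan is to run the classical Fokas--Its--Kitaev scheme in the matrix setting: first verify that the explicit matrix $Y_n^{\mathsf L}$ built from the monic left orthogonal polynomials and their second kind functions satisfies (RH1)--(RH3), and then prove uniqueness by a Liouville-type argument. Write $J(z):=\begin{bmatrix} I & W(z)\\ 0_N & I\end{bmatrix}$ for the jump matrix and $D_n(z):=\operatorname{diag}(I z^n, I z^{-n})$ for the diagonal factor appearing in (RH2). For (RH1), the entries $P_n^{\mathsf L}(z)$ and $P_{n-1}^{\mathsf L}(z)$ are matrix polynomials, hence entire, while $Q_n^{\mathsf L}(z)$ and $Q_{n-1}^{\mathsf L}(z)$, being Cauchy-type transforms $\frac1{2\pi\ii}\int_\gamma \frac{P^{\mathsf L}_k(z')}{z'-z}W(z')\d z'$ of Hölder-continuous densities on $\gamma$, are holomorphic on $\C\setminus\gamma$; multiplying the bottom row on the left by the constant matrix $-C_{n-1}$ changes nothing, so $Y_n^{\mathsf L}$ is holomorphic in $\C\setminus\gamma$.

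For (RH2) I would multiply $Y_n^{\mathsf L}(z)$ on the right by $D_n(z)^{-1}$ and read off the four blocks. The monic normalization $P_n^{\mathsf L}(z)=Iz^n+\cdots$ gives $P_n^{\mathsf L}(z)z^{-n}=I+\operatorname{O}(z^{-1})$ in the $(1,1)$ block, and $P_{n-1}^{\mathsf L}(z)=Iz^{n-1}+\cdots$ gives $-C_{n-1}P_{n-1}^{\mathsf L}(z)z^{-n}=\operatorname{O}(z^{-1})$ in the $(2,1)$ block; the Laurent expansions~\eqref{eq:secondkindlaurent} of the second kind functions give $Q_n^{\mathsf L}(z)z^n=-C_n^{-1}(Iz^{-1}+\cdots)=\operatorname{O}(z^{-1})$ in the $(1,2)$ block and, crucially, $-C_{n-1}Q_{n-1}^{\mathsf L}(z)z^n=-C_{n-1}\cdot(-C_{n-1}^{-1})(Iz^{-n}+\cdots)z^n=I+\operatorname{O}(z^{-1})$ in the $(2,2)$ block. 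Hence $Y_n^{\mathsf L}(z)D_n(z)^{-1}=I+\operatorname{O}(z^{-1})$, which is exactly (RH2). For (RH3), the jump is precisely the pair of Plemelj identities~\eqref{eq:inverseformula}: the polynomials have no jump, $(P^{\mathsf L}_n)_\pm=P^{\mathsf L}_n$, while $(Q^{\mathsf L}_n)_+-(Q^{\mathsf L}_n)_-=P^{\mathsf L}_nW$, and the same identity for index $n-1$ multiplied on the left by $-C_{n-1}$; collecting the four blocks, $(Y^{\mathsf L}_n)_+=(Y^{\mathsf L}_n)_-\,J$ on $\gamma$.

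For uniqueness, let $\tilde Y$ be any solution of (RH1)--(RH3) and set $X(z):=\tilde Y(z)\,\big(Y_n^{\mathsf L}(z)\big)^{-1}$; this is well defined on $\C\setminus\gamma$ since $\det Y_n^{\mathsf L}\equiv 1$ (already established from $\det T^{\mathsf L}_n=1$ and $\det Y^{\mathsf L}_0=1$). Across $\gamma$ both factors carry the same jump $J$, so $X_+=\tilde Y_+(Y^{\mathsf L}_{n,+})^{-1}=\tilde Y_-JJ^{-1}(Y^{\mathsf L}_{n,-})^{-1}=X_-$; thus $X$ is holomorphic off $\gamma$, has continuous boundary values agreeing on $\gamma$, and therefore extends to an entire matrix function by Morera's theorem (removability of the smooth arc $\gamma$). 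Near infinity, writing $\tilde Y=(I+\operatorname{O}(z^{-1}))D_n$ and $Y^{\mathsf L}_n=(I+\operatorname{O}(z^{-1}))D_n$ by (RH2), the diagonal factors cancel and $X(z)=(I+\operatorname{O}(z^{-1}))(I+\operatorname{O}(z^{-1}))^{-1}=I+\operatorname{O}(z^{-1})$. An entire matrix function tending to $I$ at infinity is, by Liouville's theorem applied entrywise, identically $I$, so $\tilde Y=Y^{\mathsf L}_n$.

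The step that will require the most care is the asymptotic bookkeeping in (RH2) together with the cancellation of the growing/decaying factor $D_n(z)$ in the uniqueness argument: one must make sure the lower-triangular block structure and the precise leading coefficients (the identity in the $(1,1)$ block, the $-C_n^{-1}$ and $-C_{n-1}^{-1}$ normalizations of the second kind functions in~\eqref{eq:secondkindlaurent}) conspire so that $Y^{\mathsf L}_n(z)D_n(z)^{-1}$ has the full-rank limit $I$, which is what makes $X$ both entire and asymptotically trivial. Everything else is either already contained in the earlier subsections (three term relations, second kind functions, Plemelj jump formulas) or is a routine invocation of Morera and Liouville.
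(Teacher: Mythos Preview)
Your argument is correct and follows the standard Fokas--Its--Kitaev route; the paper itself does not give a formal proof but only a remark that (RH2) and (RH3) are direct consequences of~\eqref{eq:secondkindlaurent} and~\eqref{eq:inverseformula}, so your verification of existence is a spelled-out version of exactly what the paper indicates. The one place where you go beyond the paper is uniqueness: the paper records $\det Y^{\mathsf L}_n\equiv 1$ just before the theorem but never actually writes the Liouville step, whereas you carry it out explicitly via $X=\tilde Y\,(Y^{\mathsf L}_n)^{-1}$.
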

As well as its right version.
\begin{teo} \label{teo:RRHP}
The matrix function $ Y_n^{\mathsf R}(z) $ is, for each $n \in \N$, the unique solution of the Riemann--Hilbert problem; which consists in the determination of a $2N \times 2 N$ complex matrix function such~that:
 
\hangindent=.9cm \hangafter=1
{\noindent}$\phantom{ol}${\rm (RH1):} $ Y_n^{\mathsf R} (z) $ is holomorphic in $\C \setminus \gamma $;
 
\hangindent=.9cm \hangafter=1
{\noindent}$\phantom{ol}${\rm (RH2):} has the following asymptotic behavior near infinity,
\begin{gather*} 
 Y_n^{\mathsf R} (z) = 
 \begin{bmatrix}
 I z^n & {0}_N \\ 
 {0}_N & I z^{-n} 
 \end{bmatrix}\big( I + \operatorname{O} (z^{-1}) \big);
\end{gather*}
 
\hangindent=.9cm \hangafter=1
{\noindent}$\phantom{ol}${\rm (RH3):} satisfies the jump condition
\begin{align*} 
 \big( Y^{\mathsf R}_n (z) \big)_+ &=
 \begin{bmatrix}
 I & {0}_N \\ 
  W (z) & I 
 \end{bmatrix} \big( Y^{\mathsf R}_n (z) \big)_- , &z &\in \gamma .
\end{align*} 
\end{teo}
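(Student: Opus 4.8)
The approach is \emph{mutatis mutandis} that of Theorem~\ref{teo:LRHP}: one first checks that the matrix $Y_n^{\mathsf R}(z)$ assembled from the right orthogonal polynomials and the right second kind functions solves (RH1)--(RH3), and then shows by a Liouville-type argument that the solution is unique.

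\emph{Existence.} Condition (RH1) is immediate: the first column block of $Y_n^{\mathsf R}$ consists of the entire functions $P_n^{\mathsf R}$ and $P_{n-1}^{\mathsf R}$, while the second column block consists of the Cauchy transforms $Q_n^{\mathsf R},Q_{n-1}^{\mathsf R}$ of~\eqref{eq:right_secondkind}, which are holomorphic on $\C\setminus\gamma$. For (RH2) I would insert the monic normalization $P_k^{\mathsf R}(z)=Iz^{k}+\operatorname{O}(z^{k-1})$ and the expansion~\eqref{eq:secondkindlaurentR}, $Q_k^{\mathsf R}(z)=-\big(Iz^{-k-1}+\operatorname{O}(z^{-k-2})\big)C_k^{-1}$, into the four blocks and verify, entry by entry, that $\diag(Iz^{-n},Iz^{n})\,Y_n^{\mathsf R}(z)=I+\operatorname{O}(z^{-1})$; the only blocks needing attention are $(1,2)$, namely $-P_{n-1}^{\mathsf R}(z)C_{n-1}=\operatorname{O}(z^{n-1})$, and $(2,2)$, namely $-Q_{n-1}^{\mathsf R}(z)C_{n-1}=Iz^{-n}+\operatorname{O}(z^{-n-1})$, which after rescaling contribute $\operatorname{O}(z^{-1})$ and $I+\operatorname{O}(z^{-1})$ respectively. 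For (RH3) I would use that the polynomial blocks carry no jump together with the Plemelj--Sokhotski identities $\big(Q_k^{\mathsf R}\big)_+-\big(Q_k^{\mathsf R}\big)_-=W\,P_k^{\mathsf R}$ for $k=n,n-1$, which is~\eqref{eq:inverseformulaR}; multiplying out $\left[\begin{smallmatrix} I & 0_N\\ W & I\end{smallmatrix}\right]\big(Y_n^{\mathsf R}\big)_-$ then reproduces $\big(Y_n^{\mathsf R}\big)_+$ block by block.

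\emph{Uniqueness.} Let $\widetilde Y$ be any solution and set $X(z):=\big(Y_n^{\mathsf R}(z)\big)^{-1}\widetilde Y(z)$. This is well defined off $\gamma$ because $\det Y_n^{\mathsf R}(z)=1$ (as already observed, from $\det T_n^{\mathsf R}=1$ and $Y_0^{\mathsf R}=\left[\begin{smallmatrix} I&0_N\\ S_W&I\end{smallmatrix}\right]$), and the boundary values are invertible by continuity. Since both $Y_n^{\mathsf R}$ and $\widetilde Y$ satisfy the same \emph{left} jump $J=\left[\begin{smallmatrix} I&0_N\\ W&I\end{smallmatrix}\right]$, we get $X_+=(J Y_-)^{-1}J\widetilde Y_-=Y_-^{-1}\widetilde Y_-=X_-$ on $\gamma$, so $X$ has no jump; since the boundary values are Hölder continuous (from the assumption on the $W^{(j,k)}$), $X$ extends holomorphically through $\gamma$ by Morera's theorem. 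Finally, from (RH2), $X(z)=\big(I+\operatorname{O}(z^{-1})\big)\diag(Iz^{-n},Iz^{n})\diag(Iz^{n},Iz^{-n})\big(I+\operatorname{O}(z^{-1})\big)=I+\operatorname{O}(z^{-1})$ as $z\to\infty$, so $X$ is an entire matrix function tending to $I$, whence $X\equiv I$ by Liouville's theorem and $\widetilde Y=Y_n^{\mathsf R}$.

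\emph{Main obstacle.} The delicate step is exactly ``$X$ has no jump, hence $X$ is entire'': one must rule out singularities of $X$ at the endpoints of $\gamma$ (when $\gamma$ is not closed) and control its growth along any unbounded portion of $\gamma$, which is where the standing regularity and decay hypotheses on the matrix of weights are genuinely used; everything else is the bookkeeping of (RH2) and (RH3). An alternative would be to transpose the right recursion $Y_{n+1}^{\mathsf R}=Y_n^{\mathsf R}T_n^{\mathsf R}$ so as to reduce the claim to the already established left statement of Theorem~\ref{teo:LRHP} for the weight $W^\top$, but the direct verification above seems cleaner.
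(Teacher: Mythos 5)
Your proposal is correct and follows essentially the same route the paper takes (the paper only records it in the remark after Theorem~\ref{teo:RRHP}: (RH2) from the expansion~\eqref{eq:secondkindlaurentR}, (RH3) from the Plemelj identity~\eqref{eq:inverseformulaR}, plus the standard Liouville uniqueness argument with $\det Y_n^{\mathsf R}\equiv 1$, which the paper leaves implicit). Only a cosmetic slip: in $Y_n^{\mathsf R}$ the polynomials $P_n^{\mathsf R},-P_{n-1}^{\mathsf R}C_{n-1}$ occupy the first \emph{row} block and the Cauchy transforms the second row block, not the columns as you state, though your subsequent entrywise computations use the correct placement.
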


\begin{rem}
Conditions (RH2) and (RH3) are direct consequences of the representation of the second kind functions~\eqref{eq:secondkindlaurent},~\eqref{eq:secondkindlaurentR} and the inverse formulas~\eqref{eq:inverseformula},~\eqref{eq:inverseformulaR}, respectively.
\end{rem}
\begin{rem}
 For the symmetric and Hermitian reductions these two Riemann--Hilbert problems are the same and for the fundamental matrices we have
\begin{align*}
 Y^\mathsf R_n(z)&=\big(Y_n^\mathsf L(z)\big)^\top, & \text{symmetric case,}\\
 Y^\mathsf R_n(z)&=\big(Y_n^\mathsf L(\bar z)\big)^\dagger, & \text{Hermitian case}.
\end{align*}
In both cases, we will use the notation
\begin{align*}
Y_n(z):=Y_n^\mathsf L(z).
\end{align*}
\end{rem}

We define the family of \emph{normalized left fundamental matrices} 
$\big\{ S^{\mathsf L}_n(z) \big\}_{n\in\N} $ associated with 
$\big\{ Y^{\mathsf L}_n (z)\big\}_{n\in\N} $ by means of
\begin{align*} 
 S^{\mathsf L}_n (z) & 
 :
 = Y^{\mathsf L}_n (z) 
 \begin{bmatrix} 
I z^{-n} & 0_N \\ 
0_N & I z^{n} 
\end{bmatrix} ,& n &\in \N.
\end{align*}
Taking into account the representation of $\big\{ P^{\mathsf L}_n (z) \big\}_{n\in\N}$ and $\big\{ Q^{\mathsf L}_n (z)\big\}_{n\in\N}$ in~\eqref{eq:ttrr}, we arrive to the asymptotic representation for the normalized fundamental matrices
\begin{align*}
 S^{\mathsf L}_n (z)= I 
 + 
\begin{bmatrix} 
 p_{{\mathsf L},n}^1 & - C_{n}^{-1} \\[.05cm]
 - C_{n-1} & q_{{\mathsf L},n-1}^1 
\end{bmatrix} z^{-1}
 + 
 \begin{bmatrix} 
 p_{{\mathsf L},n}^2 & - C_{n}^{-1} q_{{\mathsf L},n}^{1} \\[.05cm]
 - C_{n-1} p_{{\mathsf L},n-1}^1 & q_{{\mathsf L},n-1}^2
\end{bmatrix} z^{-2}
+ O(z^{-3}) ,
\end{align*}
for $|z|\to\infty$, where 
\begin{align*}
p_{{\mathsf L},n}^1 - p_{{\mathsf L},n+1}^1&= \beta^{\mathsf L}_n , \\
p_{{\mathsf L},n}^2 - p_{{\mathsf L},n+1}^2 &= \beta^{\mathsf L}_n p_{{\mathsf L},n}^1 + C_n^{-1} C_{n-1} , \\ 
p_{{\mathsf L},n}^3 - p_{{\mathsf L},n+1}^3 &= \beta^{\mathsf L}_n p_{{\mathsf L},n}^2 + C_n^{-1} C_{n-1} p_{{\mathsf L},n-1}^1 ,
\end{align*}
and
\begin{align*}
 q_{{\mathsf L},n}^1 - q_{{\mathsf L},n-1}^1& = \beta^{\mathsf R}_n , \\ 
 q_{{\mathsf L},n}^2 - q_{{\mathsf L},n-1}^2 &= \beta^{\mathsf R}_n q_{{\mathsf L},n}^1 + C_n C_{n+1}^{-1}.
\end{align*}
Observe that we will also have the following asymptotics for $|z|\to\infty$,
\begin{multline*} 
 \big(
 S^{\mathsf L}_n(z)
 \big)^{-1} =
 I - \begin{bmatrix} 
 p_{{\mathsf L},n}^1 & -C_{n}^{-1} \\[.05cm]
 -C_{n-1} & q_{{\mathsf L},n-1}^1 
\end{bmatrix} z^{-1}
 \\ 
+ \Bigg( \begin{bmatrix} 
 p_{{\mathsf L},n}^1 & -C_{n}^{-1} \\[.05cm] 
 -C_{n-1} & q_{{\mathsf L},n-1}^1 
\end{bmatrix}^2 
- \begin{bmatrix} 
 p_{{\mathsf L},n}^2 & -C_{n}^{-1} q_{{\mathsf L},n}^{1} \\[.05cm] 
 -C_{n-1} p_{{\mathsf L},n-1}^1 & q_{{\mathsf L},n-1}^2
\end{bmatrix} \Bigg)z^{-2} + O(z^{-3}) .
\end{multline*}

For the right version we have \emph{normalized right fundamental matrices} 
$\big\{ S^{\mathsf R}_n(z) \big\}_{n\in\N} $ associated with $\big\{ Y^{\mathsf R}_n (z)\big\}_{n\in\N} $ 
\begin{align*} 
S_n^{\mathsf R} (z) =
\begin{bmatrix} I \, z^{-n} & {0}_N \\
{0}_N & I \, z^{n} \end{bmatrix}
 {Y}^{\mathsf R}_m (z),
\end{align*}
with asymptotic behavior at infinity given by
\begin{align*} 
{S}^{\mathsf R}_n(z) = I
+
 \begin{bmatrix}
 p_{\mathsf R,n}^1 & - C_{n-1} \\[.05cm] 
 - C_{n}^{-1} & q_{\mathsf R,n-1}^1
\end{bmatrix} z^{-1}
+
\begin{bmatrix}
 p_{\mathsf R,n}^2 & - p_{\mathsf R,n-1}^1 C_{n-1} \\[.05cm]
 - q_{\mathsf R,n}^{1} C_{n}^{-1} & q_{\mathsf R, n-1}^2
\end{bmatrix} z^{-2}
+ O(z^{-3}) ,
\end{align*}
for $|z|\to\infty$, 
and the asymptotics for the inverse matrix is 
\begin{multline*} 
\big({S}^{\mathsf R}_n(z)\big)^{-1} = I
-
 \begin{bmatrix}
p_{\mathsf R,n}^1 & -C_{n-1} \\[.05cm]
-C_{n}^{-1} & q_{\mathsf R,n-1}^1
\end{bmatrix} z^{-1}
 \\ +
\Bigg( \begin{bmatrix}
p_{\mathsf R,n}^1 & -C_{n-1} \\[.05cm] 
-C_{n}^{-1} & q_{\mathsf R,n-1}^1
\end{bmatrix}^2 - \begin{bmatrix}
p_{\mathsf R,n}^2 & -p_{\mathsf R,n-1}^1 C_{n-1} \\[.05cm] 
-q_{\mathsf R,n}^{1} C_{n}^{-1} & q_{\mathsf R, n-1}^2
\end{bmatrix} \Bigg)z^{-2}+ O(z^{-3}) .
\end{multline*}
Here
\begin{align*}
p_{{\mathsf R},n}^1 - p_{{\mathsf L},n+1}^1&= \beta^{\mathsf R}_n , \\
p_{{\mathsf R},n}^2 - p_{{\mathsf L},n+1}^2 &= p_{{\mathsf L},n}^1 \beta^{\mathsf R}_n + 
C_{n-1} C_n^{-1} , \\ 
p_{{\mathsf R},n}^3 - p_{{\mathsf R},n+1}^3 &= p_{{\mathsf L},n}^2 \beta^{\mathsf R}_n + p_{{\mathsf L},n-1}^1 C_{n-1} C_n^{-1} ,
\end{align*}
and
\begin{align*}
q_{{\mathsf R},n}^1 - q_{{\mathsf L},n-1}^1& = \beta^{\mathsf L}_n , \\ 
q_{{\mathsf R},n}^2 - q_{{\mathsf L},n-1}^2 &= q_{{\mathsf L},n}^1 \beta^{\mathsf L}_n + C_{n+1}^{-1} C_n .
\end{align*}

%
%
%
%
%
%
%

\begin{teo} \label{teo:inversa}
Let $ Y_n^{\mathsf L} $ and $ Y_n^{\mathsf R} $ be, for each $n \in \mathbb N$, the unique solutions of the Riemann-Hilbert problems in Theorems~\ref{teo:LRHP} and~\ref{teo:RRHP}, respectively; then
\begin{gather} \label{eq:conexao}
(Y_n^{\mathsf L}(z))^{-1} = 
\begin{bmatrix}
0 & I \\ - I & 0
\end{bmatrix}
Y_n^{\mathsf R} (z) 
\begin{bmatrix}
0 & -I \\ I & 0
\end{bmatrix} , \quad n \in \mathbb N.
\end{gather}
\end{teo}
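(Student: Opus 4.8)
The plan is to exploit the uniqueness statements in Theorems~\ref{teo:LRHP} and~\ref{teo:RRHP}: I will show that the right-hand side of~\eqref{eq:conexao}, viewed as a function of $z$, is itself the unique solution of the left Riemann--Hilbert problem of Theorem~\ref{teo:LRHP}, hence must coincide with $Y_n^{\mathsf L}(z)$. Equivalently, and perhaps more transparently, I would define
\begin{align*}
Z_n(z) := \begin{bmatrix} 0 & I \\ -I & 0 \end{bmatrix} Y_n^{\mathsf R}(z) \begin{bmatrix} 0 & -I \\ I & 0 \end{bmatrix}
\end{align*}
and check that $Y_n^{\mathsf L}(z) Z_n(z) = I$ by verifying that the product solves a trivial Riemann--Hilbert problem (no jump, tends to $I$ at infinity, holomorphic off $\gamma$), so that Liouville's theorem forces it to be the identity.

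The verification proceeds in three steps. First, holomorphy: since $Y_n^{\mathsf R}$ is holomorphic in $\C\setminus\gamma$ by (RH1) of Theorem~\ref{teo:RRHP}, conjugation by the constant symplectic-type matrix $\left[\begin{smallmatrix} 0 & I \\ -I & 0\end{smallmatrix}\right]$ preserves this, so $Z_n$ is holomorphic in $\C\setminus\gamma$, and so is the product $Y_n^{\mathsf L} Z_n$. Second, the jump: on $\gamma$ we have $(Y_n^{\mathsf R})_+ = G^{\mathsf R}(z)(Y_n^{\mathsf R})_-$ with $G^{\mathsf R}(z) = \left[\begin{smallmatrix} I & 0_N \\ W(z) & I\end{smallmatrix}\right]$ and $(Y_n^{\mathsf L})_+ = (Y_n^{\mathsf L})_- G^{\mathsf L}(z)$ with $G^{\mathsf L}(z) = \left[\begin{smallmatrix} I & W(z) \\ 0_N & I\end{smallmatrix}\right]$; a direct two-by-two block computation gives the conjugation identity
\begin{align*}
\begin{bmatrix} 0 & I \\ -I & 0\end{bmatrix} G^{\mathsf R}(z) \begin{bmatrix} 0 & -I \\ I & 0\end{bmatrix} = \begin{bmatrix} I & -W(z) \\ 0_N & I\end{bmatrix} = \big(G^{\mathsf L}(z)\big)^{-1},
\end{align*}
so that $(Z_n)_+ = (G^{\mathsf L})^{-1}(Z_n)_-$ and therefore $(Y_n^{\mathsf L} Z_n)_+ = (Y_n^{\mathsf L})_- G^{\mathsf L} (G^{\mathsf L})^{-1} (Z_n)_- = (Y_n^{\mathsf L} Z_n)_-$, i.e.\ the product has no jump across $\gamma$ and extends to an entire function. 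Third, the behavior at infinity: from (RH2) of Theorems~\ref{teo:LRHP} and~\ref{teo:RRHP} one has $Y_n^{\mathsf L}(z) = (I + \operatorname{O}(z^{-1})) D_n(z)$ and $Y_n^{\mathsf R}(z) = D_n(z)(I + \operatorname{O}(z^{-1}))$ with $D_n(z) = \operatorname{diag}(I z^n, I z^{-n})$; since the constant matrix $\left[\begin{smallmatrix} 0 & I \\ -I & 0\end{smallmatrix}\right]$ satisfies $\left[\begin{smallmatrix} 0 & I \\ -I & 0\end{smallmatrix}\right] D_n(z)\left[\begin{smallmatrix} 0 & -I \\ I & 0\end{smallmatrix}\right] = D_n(z)^{-1}$, we get $Z_n(z) = D_n(z)^{-1}(I + \operatorname{O}(z^{-1})) D_n(z)$. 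The one subtle point is that the off-diagonal blocks of the $\operatorname{O}(z^{-1})$ term, after conjugation by $D_n(z)^{-1}(\cdot)D_n(z)$, pick up factors $z^{\pm 2n}$; but the structure of $Y_n^{\mathsf R}$ (polynomial entries in the first block column, decaying second-kind functions in the second) guarantees that $Z_n(z) = I + \operatorname{O}(z^{-1})$ still holds after multiplication, because the potentially growing entries are exactly cancelled by the corresponding vanishing entries — this is the place where one must use the precise asymptotic forms, e.g.\ the normalized fundamental matrix expansions for $S_n^{\mathsf L}$ and $S_n^{\mathsf R}$ recorded above, rather than just the crude $\operatorname{O}(z^{-1})$ bound. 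Granting this, $Y_n^{\mathsf L}(z) Z_n(z)$ is entire and tends to $I$ at infinity, hence equals $I$ by Liouville, which is precisely~\eqref{eq:conexao}.

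The main obstacle is this last point: the naive estimate ``$(I+\operatorname{O}(z^{-1}))$ conjugated by $D_n$ is again $(I+\operatorname{O}(z^{-1}))$'' is false in general because $D_n$ is unbounded, so I must instead argue that $Y_n^{\mathsf L}(z) Z_n(z) - I$ is $\operatorname{O}(z^{-1})$ by combining the two one-sided asymptotics so that the dangerous $z^{2n}$ and $z^{-2n}$ contributions meet a vanishing partner; concretely one writes $Y_n^{\mathsf L} Z_n = Y_n^{\mathsf L}\left[\begin{smallmatrix} 0 & I \\ -I & 0\end{smallmatrix}\right] Y_n^{\mathsf R}\left[\begin{smallmatrix} 0 & -I \\ I & 0\end{smallmatrix}\right]$ and uses $S_n^{\mathsf L}(z)\left[\begin{smallmatrix} 0 & I \\ -I & 0\end{smallmatrix}\right] S_n^{\mathsf R}(z) = \left[\begin{smallmatrix} 0 & I \\ -I & 0\end{smallmatrix}\right] + \operatorname{O}(z^{-1})$, which is a bounded statement amenable to the block expansions above and is checked at the first two orders using the relations between $p^k$, $q^k$ and the recursion coefficients already listed; the product of $D_n$-conjugations then telescopes to the identity. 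Alternatively, one may sidestep the asymptotic bookkeeping entirely: since $\det Y_n^{\mathsf L} \equiv 1$, the inverse $(Y_n^{\mathsf L})^{-1}$ exists and is holomorphic off $\gamma$; compute directly its jump and its leading asymptotics from the explicit cofactor formula in the $2\times 2$ block sense, observe they match those of $\left[\begin{smallmatrix} 0 & I \\ -I & 0\end{smallmatrix}\right] Y_n^{\mathsf R}\left[\begin{smallmatrix} 0 & -I \\ I & 0\end{smallmatrix}\right]$, and invoke uniqueness in Theorem~\ref{teo:RRHP} after the evident conjugation turning a left problem into a right one.
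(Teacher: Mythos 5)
Your proof is correct, but it follows a genuinely different route from the paper's. You run the standard Riemann--Hilbert/Liouville argument: conjugate the right problem by $\left[\begin{smallmatrix}0&I\\-I&0\end{smallmatrix}\right]$, check that the conjugated jump matrix is the inverse of the left jump matrix, and conclude that $Y_n^{\mathsf L}Z_n$ is entire and tends to $I$. The paper instead never touches the analytic characterization: it multiplies the left and right three-term recurrences against each other, telescopes to obtain confluent Christoffel--Darboux identities for the four pairings $PP$, $QQ$, $QP$, $PQ$, and reads off from these that the explicit block matrix built from $P_{n-1}^{\mathsf R}C_{n-1}$, $P_n^{\mathsf R}$, $Q_{n-1}^{\mathsf R}C_{n-1}$, $Q_n^{\mathsf R}$ is a two-sided inverse of $Y_n^{\mathsf L}$. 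The algebraic route costs more computation but produces, as a by-product, the bilinear identities \eqref{eq:LiouvilleconfluenteQP}--\eqref{eq:LiouvilleconfluentePQ} that the paper reuses in the subsequent corollary; your route is shorter and more conceptual but leans on the uniqueness claims of Theorems~\ref{teo:LRHP} and~\ref{teo:RRHP} (and, strictly, on controlling the product near the endpoints of $\gamma$ so that ``no jump'' really yields an entire function).

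One remark on your asymptotic step: the difficulty you flag is not actually there. From $Y_n^{\mathsf R}(z)=D_n(z)\big(I+\operatorname{O}(z^{-1})\big)$ and $\left[\begin{smallmatrix}0&I\\-I&0\end{smallmatrix}\right]D_n(z)\left[\begin{smallmatrix}0&-I\\I&0\end{smallmatrix}\right]=D_n(z)^{-1}$ one gets
\begin{align*}
Z_n(z)=D_n(z)^{-1}\big(I+\operatorname{O}(z^{-1})\big),
\end{align*}
with no trailing $D_n(z)$ --- your intermediate formula $Z_n=D_n^{-1}(I+\operatorname{O}(z^{-1}))D_n$ has a spurious conjugation. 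Hence $Y_n^{\mathsf L}Z_n=(I+\operatorname{O}(z^{-1}))D_nD_n^{-1}(I+\operatorname{O}(z^{-1}))=I+\operatorname{O}(z^{-1})$ directly: the $z^{\pm 2n}$ factors never appear, and no order-by-order cancellation using the $p^k$, $q^k$ relations is needed. Your fallback via $S_n^{\mathsf L}\left[\begin{smallmatrix}0&I\\-I&0\end{smallmatrix}\right]S_n^{\mathsf R}=\left[\begin{smallmatrix}0&I\\-I&0\end{smallmatrix}\right]+\operatorname{O}(z^{-1})$ is the same observation in disguise and is immediate from $S_n^{\mathsf L},S_n^{\mathsf R}=I+\operatorname{O}(z^{-1})$, so the argument closes cleanly.
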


\begin{proof}
Let us remember that $\big\{ P_n^{\mathsf L} \big\}_{n\in\N}$ satisfy~\eqref{eq:ttrr}, i.e. 
\begin{align*} 
z {P}^{\mathsf L}_n (z) &= {P}^{\mathsf L}_{n+1} (z) + \beta^{\mathsf L}_n {P}^{\mathsf L}_{n} (z) + C_n^{-1} C_{n-1} {P}^{\mathsf L}_{n-1} (z),& \ n &\in \mathbb N, 
\end{align*}
with initial conditions $P^{\mathsf L}_{-1} = 0_N $ and
$ P^{\mathsf L}_{0} = I$; and $\big\{ P_n^{\mathsf R} \big\}_{n\in\N}$ satisfies~\eqref{eq:rightttrr}, i.e.
\begin{align*} 
 t P^{\mathsf R}_n (t) & = P^{\mathsf R}_{n+1} (t) + P^{\mathsf R}_{n} (t) C_n \beta^{\mathsf L}_n C_n^{-1} + P^{\mathsf R}_{n-1} (t) C_{n-1} C_n^{-1} ,& n &\in\N,
\end{align*}
with initial conditions $P^{\mathsf R}_{-1} = 0_N $ and
$ P^{\mathsf R}_{0} = I$. Multiplying the first equation on the left by $P_n^{\mathsf R} (t) C_n$ and the second one on the right by $ C_n P_n^{\mathsf L} (z) $ and summing up, we arrive after applying telescoping rule
\begin{align} \label{eq:LiouvillePP}
(z-t) \sum_{k=0}^n P_k^{\mathsf R}(t) C_k P_k^{\mathsf L} (z) 
 & = P_n^{\mathsf R}(t) C_n P_{n+1}^{\mathsf L} (z)
 - P_{n+1}^{\mathsf R}(t) C_n P_{n}^{\mathsf L} (z) , & n \in \mathbb N ;
\end{align}
hence for $t = z$,
\begin{align} \label{eq:LiouvilleconfluentePP}
P_n^{\mathsf R}(z) C_n P_{n+1}^{\mathsf L} (z)
 & = P_{n+1}^{\mathsf R}(z) C_n P_{n}^{\mathsf L} (z) , & n \in \mathbb N ;
\end{align}
As $\big\{ Q_n^{\mathsf L} \big\}_{n\in\N}$ (respectively, $\big\{ Q_n^{\mathsf R} \big\}_{n\in\N}$) satisfy~\eqref{eq:ttrr} (respectively,~\eqref{eq:rightttrr}), with initial conditions $Q_{-1}^{\mathsf L} = Q_{-1}^{\mathsf R} = - C_{-1}^{-1}$, $Q_{0}^{\mathsf L} = Q_{0}^{\mathsf R} = S_W (z)$, proceeding in the same way with 
$\big\{ Q_n^{\mathsf L} \big\}_{n\in\N}$ and $\big\{ Q_n^{\mathsf R} \big\}_{n\in\N}$ in place of $\big\{ P_n^{\mathsf L} \}_{n\in\N}$ and 
$\big\{ P_n^{\mathsf R} \big\}_{n\in\N}$, respectively, we arrive to
\begin{align} \label{eq:LiouvilleQQ}
(z-t) \sum_{k=0}^n Q_k^{\mathsf R}(t) C_k Q_k^{\mathsf L} (z) 
 & = Q_n^{\mathsf R}(t) C_n Q_{n+1}^{\mathsf L} (z)
 - Q_{n+1}^{\mathsf R}(t) C_n Q_{n}^{\mathsf L} (z) , & n \in \mathbb N ;
\end{align}
hence for $t = z$,
\begin{align} \label{eq:LiouvilleconfluenteQQ}
Q_n^{\mathsf R}(z) C_n Q_{n+1}^{\mathsf L} (z)
 & = Q_{n+1}^{\mathsf R}(z) C_n Q_{n}^{\mathsf L} (z) , & n \in \mathbb N .
\end{align}
Applying the same procedure mixing the $P$'s and the $Q$'s 
we get, for all $n \in \mathbb N$,
\begin{align} \label{eq:LiouvilleQP}
(z-t) \sum_{k=0}^n Q_k^{\mathsf R}(t) C_k P_k^{\mathsf L} (z) 
 & = Q_n^{\mathsf R}(t) C_n P_{n+1}^{\mathsf L} (z)
 - Q_{n+1}^{\mathsf R}(t) C_n P_{n}^{\mathsf L} (z) + I, \\
 \label{eq:LiouvillePQ}
(z-t) \sum_{k=0}^n P_k^{\mathsf R}(t) C_k Q_k^{\mathsf L} (z) 
 & = P_n^{\mathsf R}(t) C_n Q_{n+1}^{\mathsf L} (z)
 - P_{n+1}^{\mathsf R}(t) C_n Q_{n}^{\mathsf L} (z) - I ,
\end{align}
and when $t = z$ we arrive to, for all $n \in \mathbb N$,
\begin{align}\label{eq:LiouvilleconfluenteQP}
Q_{n+1}^{\mathsf R}(z) C_n P_{n}^{\mathsf L} (z) 
 - Q_n^{\mathsf R}(z) C_n P_{n+1}^{\mathsf L} (z) & = I , 
   \\
\label{eq:LiouvilleconfluentePQ}
P_n^{\mathsf R}(z) C_n Q_{n+1}^{\mathsf L} (z) 
 - P_{n+1}^{\mathsf R}(z) C_n Q_{n}^{\mathsf L} (z) & = I . 
\end{align}
Equations~\eqref{eq:LiouvillePP},~\eqref{eq:LiouvilleQQ},~\eqref{eq:LiouvilleQP} and~\eqref{eq:LiouvillePQ} are known in the literature as Christoffel-Darboux 
formulas.
Now, from~\eqref{eq:LiouvilleconfluentePP},~\eqref{eq:LiouvilleconfluenteQQ},~\eqref{eq:LiouvilleconfluenteQP} and~\eqref{eq:LiouvilleconfluentePQ}
we conclude that
\begin{align*}
\begin{bmatrix}
- Q_{n-1}^{\mathsf R} (z) C_{n-1} & - Q_{n}^{\mathsf R} (z) \\
 P_{n-1}^{\mathsf R} (z) C_{n-1} & P_{n}^{\mathsf R} (z) 
\end{bmatrix}
Y_n^{\mathsf L} (z) & = I , & n \in \mathbb N ,
\end{align*}
and as
\begin{align*}
\begin{bmatrix}
- Q_{n-1}^{\mathsf R} (z) C_{n-1} & - Q_{n}^{\mathsf R} (z) \\
 P_{n-1}^{\mathsf R} (z) C_{n-1} & P_{n}^{\mathsf R} (z) 
\end{bmatrix}
 & = 
\begin{bmatrix}
0 & I \\ -I & 0
\end{bmatrix}
Y_n^{\mathsf R} (z) 
\begin{bmatrix}
0 & -I \\ I & 0
\end{bmatrix} , & n \in \mathbb N ,
\end{align*}
we get the desired result.
\end{proof}

\begin{coro}
In the conditions of theorem~\ref{teo:inversa} we have that for all $n \in \mathbb N$,
\begin{align}
Q_{n}^{\mathsf L} (z) P_{n-1}^{\mathsf R} (z)
 - P_{n}^{\mathsf L} (z) Q_{n-1}^{\mathsf R} (z) & = C_{n-1}^{-1} , 
 \\
P_{n-1}^{\mathsf L} (z) Q_{n}^{\mathsf R} (z)
 - Q_{n-1}^{\mathsf L} (z) P_{n}^{\mathsf R} (z) & = C_{n-1}^{-1} , 
 \\
 Q_{n}^{\mathsf L} (z) P_{n}^{\mathsf R} (z)
 - P_{n}^{\mathsf L} (z) Q_{n}^{\mathsf R} (z) & = 0 
 .
\end{align}
\end{coro}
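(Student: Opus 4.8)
The plan is to read the three identities directly off the matrix equation $Y_n^{\mathsf L}(z)\,\big(Y_n^{\mathsf L}(z)\big)^{-1}=I$, using the explicit block form of the inverse that already appeared along the way in the proof of Theorem~\ref{teo:inversa}. Recall from there that
\[
\big(Y_n^{\mathsf L}(z)\big)^{-1}=
\begin{bmatrix}
-Q_{n-1}^{\mathsf R}(z)\,C_{n-1} & -Q_{n}^{\mathsf R}(z)\\[.1cm]
P_{n-1}^{\mathsf R}(z)\,C_{n-1} & P_{n}^{\mathsf R}(z)
\end{bmatrix},
\qquad
Y_n^{\mathsf L}(z)=
\begin{bmatrix}
P_{n}^{\mathsf L}(z) & Q_{n}^{\mathsf L}(z)\\[.1cm]
-C_{n-1}P_{n-1}^{\mathsf L}(z) & -C_{n-1}Q_{n-1}^{\mathsf L}(z)
\end{bmatrix}.
\]

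First I would form the product $Y_n^{\mathsf L}(z)\,\big(Y_n^{\mathsf L}(z)\big)^{-1}$ of these two $2N\times 2N$ matrices block by block and equate the result with $I$. The $(1,1)$ block gives $\big(Q_{n}^{\mathsf L}P_{n-1}^{\mathsf R}-P_{n}^{\mathsf L}Q_{n-1}^{\mathsf R}\big)C_{n-1}=I$, which upon right multiplication by $C_{n-1}^{-1}$ is exactly the first identity; the $(1,2)$ block gives directly $Q_{n}^{\mathsf L}P_{n}^{\mathsf R}-P_{n}^{\mathsf L}Q_{n}^{\mathsf R}=0$, the third identity; and the $(2,2)$ block gives $C_{n-1}\big(P_{n-1}^{\mathsf L}Q_{n}^{\mathsf R}-Q_{n-1}^{\mathsf L}P_{n}^{\mathsf R}\big)=I$, which after left multiplication by $C_{n-1}^{-1}$ is the second identity. (The remaining $(2,1)$ block reproduces the third identity with $n$ replaced by $n-1$, a harmless consistency check.) Note that the companion relation $\big(Y_n^{\mathsf L}\big)^{-1}Y_n^{\mathsf L}=I$ instead recovers the confluent Christoffel--Darboux formulas \eqref{eq:LiouvilleconfluentePP}--\eqref{eq:LiouvilleconfluentePQ}, so the corollary is genuinely the ``other order'' of multiplication.

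There is essentially no obstacle here: the whole argument is a $2\times 2$ block multiplication and Theorem~\ref{teo:inversa} supplies all the input needed. The only points requiring a little care are the bookkeeping of the left/right superscripts and of the placement of the $C_{n-1}$ factors, and a remark on the domain of validity: each left-hand side is, by the Hermite--Pad\'e formulas, actually a polynomial in $z$ (the Stieltjes--Markov transform $S_W$ cancels), so the computation, valid a priori on $\C\setminus\gamma$, yields identities of entire functions holding for all $z\in\C$ and all $n\in\N$. In particular the base case $n=0$ is included, where $P_{-1}^{\mathsf R}=0_N$, $Q_{-1}^{\mathsf R}=-C_{-1}^{-1}$ and $P_{0}^{\mathsf L}=I$ make the first identity collapse to $C_{-1}^{-1}=C_{-1}^{-1}$.
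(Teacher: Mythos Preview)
Your proof is correct and is essentially identical to the paper's own argument: the paper also multiplies $Y_n^{\mathsf L}(z)$ on the right by the explicit block form of its inverse and reads off the three identities from the resulting block equations. Your additional remarks on the domain of validity and the $n=0$ base case are sound but not needed for the corollary as stated.
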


\begin{proof}
As we have already prove the matrix
\begin{gather*}
\begin{bmatrix}
- Q_{n-1}^{\mathsf R} (z) C_{n-1} & - Q_{n}^{\mathsf R} (z) \\
 P_{n-1}^{\mathsf R} (z) C_{n-1} & P_{n}^{\mathsf R} (z) 
\end{bmatrix},
\end{gather*}
is the inverse of $Y_n^{\mathsf L} (z)$, i.e.
\begin{gather*}
Y_n^{\mathsf L} (z) \begin{bmatrix}
- Q_{n-1}^{\mathsf R} (z) C_{n-1} & - Q_{n}^{\mathsf R} (z) \\
 P_{n-1}^{\mathsf R} (z) C_{n-1} & P_{n}^{\mathsf R} (z) 
\end{bmatrix} = I;
\end{gather*}
and multiplying the two matrices we get the result.
\end{proof}

\begin{coro}
In the conditions of theorem~\ref{teo:inversa} we have that for all $n \in \mathbb N$,
\begin{gather*}
(S_n^{\mathsf L} (z))^{-1} = I 
 + 
\begin{bmatrix} 
q_{\mathsf R,n-1}^1 & C_n^{-1} \\ 
C_{n-1} & p_{\mathsf R,n}^1
\end{bmatrix} z^{-1}
+ 
\begin{bmatrix} 
q_{\mathsf R,n-1}^2 & q_{\mathsf R,n}^1 C_n^{-1} \\ 
p_{\mathsf R,n-1}^2 C_{n-1} & p_{\mathsf R,n}^2
\end{bmatrix} z^{-2} + \cdots ,
 \\
(S_n^{\mathsf R} (z))^{-1} = I 
+ 
\begin{bmatrix} 
q_{\mathsf L,n-1}^1 & C_{n-1} \\ 
C_{n}^{-1} & p_{\mathsf L,n}^1
\end{bmatrix} z^{-1}
+ 
\begin{bmatrix} 
q_{\mathsf L,n-1}^2 & C_{n-1} p_{\mathsf L,n-1}^1 \\ 
C_{n}^{-1} p_{\mathsf L,n-1}^2 & p_{\mathsf L,n}^2
\end{bmatrix} z^{-2} + \cdots .
\end{gather*}
\end{coro}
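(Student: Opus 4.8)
The plan is to avoid any new integral estimate and simply transport the asymptotic expansions of $S^{\mathsf L}_n(z)$ and $S^{\mathsf R}_n(z)$ already obtained above through the connection formula~\eqref{eq:conexao} of Theorem~\ref{teo:inversa}. Set $J=\begin{bmatrix}0_N & I\\-I & 0_N\end{bmatrix}$, so that $J^{-1}=\begin{bmatrix}0_N&-I\\I&0_N\end{bmatrix}=-J$ and~\eqref{eq:conexao} reads $(Y^{\mathsf L}_n(z))^{-1}=J\,Y^{\mathsf R}_n(z)\,J^{-1}$. From the definitions of the normalized fundamental matrices one has $(S^{\mathsf L}_n(z))^{-1}=\diag(z^nI,z^{-n}I)\,(Y^{\mathsf L}_n(z))^{-1}$ and $(S^{\mathsf R}_n(z))^{-1}=(Y^{\mathsf R}_n(z))^{-1}\,\diag(z^nI,z^{-n}I)$. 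The only structural fact needed is that conjugation by $J$ interchanges the two diagonal blocks, i.e. $\diag(z^nI,z^{-n}I)\,J=J\,\diag(z^{-n}I,z^nI)$ and, equivalently, $J\,\diag(z^nI,z^{-n}I)=\diag(z^{-n}I,z^nI)\,J$.

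First I would combine these ingredients: substituting $(Y^{\mathsf L}_n)^{-1}=J\,Y^{\mathsf R}_n\,J^{-1}$ into the first identity and pushing the diagonal factor through $J$ yields $(S^{\mathsf L}_n(z))^{-1}=J\,S^{\mathsf R}_n(z)\,J^{-1}$, and the mirror manipulation gives $(S^{\mathsf R}_n(z))^{-1}=J^{-1}\,S^{\mathsf L}_n(z)\,J$. Hence each of the two desired expansions is nothing but the $J$-conjugate of an expansion already in hand, and it remains only to apply term by term the elementary block rule $J\begin{bmatrix}a&b\\c&d\end{bmatrix}J^{-1}=\begin{bmatrix}d&-c\\-b&a\end{bmatrix}$ (the same formula holds for $J^{-1}(\cdot)J$ since $J^{-1}=-J$): applied to the identity it returns the identity, and applied to the $z^{-1}$- and $z^{-2}$-coefficients of $S^{\mathsf R}_n$ (respectively $S^{\mathsf L}_n$) it reproduces the coefficient matrices recorded in the statement.

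An equivalent, more hands-on route is to skip the intertwining and write $(S^{\mathsf L}_n(z))^{-1}=\diag(z^nI,z^{-n}I)\begin{bmatrix}-Q^{\mathsf R}_{n-1}(z)C_{n-1} & -Q^{\mathsf R}_n(z)\\ P^{\mathsf R}_{n-1}(z)C_{n-1} & P^{\mathsf R}_n(z)\end{bmatrix}$, the bracketed matrix being the explicit inverse of $Y^{\mathsf L}_n(z)$ exhibited in the proof of Theorem~\ref{teo:inversa}; one then inserts the Laurent expansion~\eqref{eq:secondkindlaurentR} of $Q^{\mathsf R}_k$ and the monic representation of $P^{\mathsf R}_k$ and collects powers of $z^{-1}$, and likewise for $(S^{\mathsf R}_n(z))^{-1}$ using $(Y^{\mathsf R}_n)^{-1}=J^{-1}Y^{\mathsf L}_nJ$. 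No step here is genuinely difficult; the main thing to watch is the bookkeeping of index shifts and signs — in particular that the $(1,1)$ block arises from $z^n$ times $Q^{\mathsf R}_{n-1}$, whose leading behaviour is of order $z^{-n}$ so that it contributes $I+q^1_{\mathsf R,n-1}z^{-1}+\cdots$, whereas the $(2,1)$ block arises from $z^{-n}$ times $P^{\mathsf R}_{n-1}$, whose leading term is of order $z^{n-1}$. Matching these against the $J$-conjugated expansions of $S^{\mathsf R}_n$ and $S^{\mathsf L}_n$ then fixes every coefficient and completes the proof.
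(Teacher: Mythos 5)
Your method is correct and is essentially the derivation the authors intend (the paper states this corollary without proof): the normalized inverses are obtained by multiplying the explicit inverse of $Y^{\mathsf L}_n(z)$ exhibited in the proof of Theorem~\ref{teo:inversa} by the appropriate diagonal $z$-power factor, and your intertwining identities $(S^{\mathsf L}_n)^{-1}=J\,S^{\mathsf R}_n\,J^{-1}$ and $(S^{\mathsf R}_n)^{-1}=J^{-1}S^{\mathsf L}_n\,J$ are a clean and correct packaging of that computation; the commutation $\diag(z^nI,z^{-n}I)\,J=J\,\diag(z^{-n}I,z^nI)$ is indeed all that is needed, and the block rule $J\left[\begin{smallmatrix}a&b\\c&d\end{smallmatrix}\right]J^{-1}=\left[\begin{smallmatrix}d&-c\\-b&a\end{smallmatrix}\right]$ is right.

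The one weak point is your final step: ``it reproduces the coefficient matrices recorded in the statement'' is asserted rather than executed, and executing it shows the claim is not literally true — two entries of the stated corollary are misprints. Conjugating the $z^{-2}$ coefficient of $S^{\mathsf R}_n$, whose $(1,2)$ entry is $-p^1_{\mathsf R,n-1}C_{n-1}$, yields $(2,1)$ entry $p^1_{\mathsf R,n-1}C_{n-1}$ for $(S^{\mathsf L}_n)^{-1}$, not $p^2_{\mathsf R,n-1}C_{n-1}$; equivalently, by your second route this entry comes from $z^{-n}P^{\mathsf R}_{n-1}(z)C_{n-1}=C_{n-1}z^{-1}+p^1_{\mathsf R,n-1}C_{n-1}z^{-2}+\cdots$. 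Likewise the $(2,1)$ entry of the $z^{-2}$ coefficient of $(S^{\mathsf R}_n)^{-1}$ comes from $-Q^{\mathsf L}_n(z)\,z^{n}=C_n^{-1}z^{-1}+C_n^{-1}q^1_{\mathsf L,n}z^{-2}+\cdots$, so it equals $C_n^{-1}q^1_{\mathsf L,n}$ and not $C_n^{-1}p^2_{\mathsf L,n-1}$. So your strategy is sound and does prove (a corrected form of) the corollary, but you should carry out the term-by-term match explicitly; doing so both completes the argument and flags the misprints in the statement.
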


\section{Constant jump on the support, structure matrices and zero curvature} \label{sec:3}

So far we have discuss the connection between biorthogonal families of matrix polynomials for a given matrix of weights 
$ W
$ and a specific Riemann--Hilbert problem. Now, to derive difference and/or differential equations satisfied by these families of matrix polynomials we will we move to a simpler setting and we will assume that the following~holds

 
\hangindent=.9cm \hangafter=1
{\noindent}$\phantom{ol}$i)
The matrix of weights factors out as $ W (z) = W^{\mathsf L} (z) W^{\mathsf R} (z) $, $z\in\gamma$.

\hangindent=.9cm \hangafter=1
{\noindent}$\phantom{ol}$ii)
The factors $ W^{\mathsf L} $ and $ W^{\mathsf R}$ are the restriction to the curve $\gamma$ of matrices of entire 
functions $ W^{\mathsf L} (z)$ and $ W^{\mathsf R}(z) $, $z\in\C$.
 
\hangindent=.9cm \hangafter=1
{\noindent}$\phantom{ol}$iii)
The right logarithmic derivative $h^\mathsf L(z):=\big( W^{\mathsf L} (z) \big)^{\prime} \big( W^{\mathsf L} (z) \big)^{-1} $ and the left logarithmic derivative 
$h^\mathsf L(z):=\big( W^{\mathsf R} (z) \big)^{-1} \big( W^{\mathsf R}(z) \big)^{\prime} $ are also entire functions.

We underline that for a given matrix of weights $ W(z)$ we will have many possible factorization $ W (z) = W^{\mathsf L} (z) W^{\mathsf R} (z)$. Indeed, if we define an equivalence relation $( W^{\mathsf L}, W^{\mathsf R}) \sim(\widetilde W^{\mathsf L},\widetilde W^{\mathsf R})$ if and only if $ W^{\mathsf L} W^{\mathsf R}=\widetilde W^{\mathsf L}\widetilde W^{\mathsf R}$, then each matrix of weights $ W$ can be though as a class of equivalence, and can be described the orbit 
\begin{gather*}
\Big\{ ( W^{\mathsf L}\phi,\phi^{-1} W^{\mathsf R}), \ \phi(z) \ \text{ is a nonsingular matrix of entire functions} \Big\} \, .
\end{gather*}

\subsection{Constant jump on the support}

Given assumptions i) and ii), for each factorization~$ W = W^{\mathsf L} W^{\mathsf R}$, we introduce the \emph{constant jump fundamental matrices } which will be instrumental in what follows
\begin{align} \label{eq:zn}
 Z_n^{\mathsf L}(z) 
& :
= Y^{\mathsf L}_n (z) 
\begin{bmatrix} 
 W^{\mathsf L} (z) & 0_N \\ 
0_N & ( W^{\mathsf R} (z))^{-1} 
\end{bmatrix}
 , & \\
 \label{eq:zetan}
{Z}^{\mathsf R}_n (z)
& :
=
\begin{bmatrix} 
 W^{\mathsf R} (z) & 0_N \\ 
0_N & ( W^{\mathsf L} (z))^{-1} 
\end{bmatrix}
{Y}^{\mathsf R}_n (z) , & n \in \mathbb N
 .
\end{align}
Taking inverse on~\eqref{eq:zn} and applying~\eqref{eq:conexao} we see that 
$Z_n^{\mathsf R}$ given in~\eqref{eq:zetan} admits the representation
\begin{align} \label{eq:znright}
Z_n^{\mathsf R} (z)
 & = 
\begin{bmatrix}
0 & -I \\ I & 0
\end{bmatrix}
(Z_n^{\mathsf L} (z))^{-1}
\begin{bmatrix}
0 & I \\ -I & 0
\end{bmatrix} , & n \in \mathbb N .
\end{align}

\begin{pro} \label{prop:zn}
For each factorization $ W = W^{\mathsf L} W^{\mathsf R}$, the constant jump fundamental matrices~$Z^{\mathsf L}_n(z) $ and $Z^\mathsf R_n(z)$ are, for each $n \in \N$, characterized by the following properties: 

\hangindent=.9cm \hangafter=1
{\noindent}$\phantom{ol}${\rm i)}
They are holomorphic on $\C \setminus \gamma$.

\hangindent=.9cm \hangafter=1
{\noindent}$\phantom{ol}${\rm ii)}
We have the following asymptotic behaviors
\begin{align*}
Z^\mathsf L_n(z)&=\big( I + \operatorname{O} (z^{-1}) \big)
\begin{bmatrix}
z^n W^{\mathsf L} (z) & {0_N} \\ 
{0_N} & I z^{-n} ( W^{\mathsf R} (z))^{-1} 
\end{bmatrix}, \\
Z^\mathsf R_n(z)&=\begin{bmatrix}
z^n W^{\mathsf R} (z) & 0_N \\ 
0_N & ( W^{\mathsf R} (z))^{-1} z^{-n}
\end{bmatrix}\big( I + \operatorname{O} (z^{-1}) \big),
\end{align*}
for $|z|\to\infty$.

\hangindent=.9cm \hangafter=1
{\noindent}$\phantom{ol}${\rm iii)}
They present the following \emph{constant jump condition} on $\gamma$
\begin{align*} 
\big( Z^{\mathsf L}_n (z) \big)_+ &= \big( Z^{\mathsf L}_n (z) \big)_- 
\begin{bmatrix} I & I \\ 
0_N & I 
\end{bmatrix} , &
\big( {Z}^{\mathsf R}_n (z) \big)_+ &=
\begin{bmatrix} 
I & {0}_N \\ 
I & I 
\end{bmatrix} 
\big( {Z}^{\mathsf R}_n (z) \big)_- ,
\end{align*}
for all $z\in\gamma$ in the support on the matrix of weights.
\end{pro}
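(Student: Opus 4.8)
The plan is to verify each of the three properties by directly transporting the corresponding property of the fundamental matrices $Y^{\mathsf L}_n$ and $Y^{\mathsf R}_n$ through the definitions~\eqref{eq:zn} and~\eqref{eq:zetan}, using assumptions i)--iii). For property i), holomorphicity away from $\gamma$ is immediate: by Theorem~\ref{teo:LRHP} (resp.~Theorem~\ref{teo:RRHP}) the matrix $Y^{\mathsf L}_n$ (resp.~$Y^{\mathsf R}_n$) is holomorphic on $\C\setminus\gamma$, while assumption ii) guarantees that $W^{\mathsf L}$ and $W^{\mathsf R}$ extend to matrices of entire functions, so the block-diagonal factor $\diag\!\big(W^{\mathsf L}(z),(W^{\mathsf R}(z))^{-1}\big)$ is holomorphic wherever $W^{\mathsf R}(z)$ is invertible; one should note that $\det W^{\mathsf R}$ is entire and not identically zero (regularity of $W$), so its zero set is discrete, and in fact the product $Z^{\mathsf L}_n = Y^{\mathsf L}_n\,\diag(W^{\mathsf L},(W^{\mathsf R})^{-1})$ has no singularities at those points because $Y^{\mathsf L}_n$ carries the factor $W$ in its upper-right block through the jump, but cleanest is simply to observe that off $\gamma$ both factors are meromorphic with poles only at zeros of $\det W^{\mathsf R}$, and the combination is regular there; alternatively restrict attention, as the statement does, to a neighbourhood of $\gamma$ where $W^{\mathsf L},W^{\mathsf R}$ are genuine weight factors. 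For property ii), I would just multiply the asymptotic expansion (RH2) of $Y^{\mathsf L}_n$, namely $Y^{\mathsf L}_n(z)=(I+\operatorname{O}(z^{-1}))\diag(Iz^n,Iz^{-n})$, on the right by $\diag(W^{\mathsf L}(z),(W^{\mathsf R}(z))^{-1})$, and since the two block-diagonal factors commute as block-diagonal matrices the $z^{\pm n}$ scalars pass through, giving exactly the stated form; the right-hand version is identical using (RH2) of Theorem~\ref{teo:RRHP}.

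The substantive point is property iii), the constant jump. Here I would take the jump condition (RH3) from Theorem~\ref{teo:LRHP}, $\big(Y^{\mathsf L}_n\big)_+=\big(Y^{\mathsf L}_n\big)_-\left[\begin{smallmatrix}I & W\\ 0_N & I\end{smallmatrix}\right]$ on $\gamma$, and note that $W^{\mathsf L},W^{\mathsf R}$ being restrictions of entire functions have no jump across $\gamma$, so $\big(\diag(W^{\mathsf L},(W^{\mathsf R})^{-1})\big)_+=\big(\diag(W^{\mathsf L},(W^{\mathsf R})^{-1})\big)_-=\diag(W^{\mathsf L},(W^{\mathsf R})^{-1})$. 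Then
\begin{align*}
\big(Z^{\mathsf L}_n\big)_+ &= \big(Y^{\mathsf L}_n\big)_+\begin{bmatrix}W^{\mathsf L} & 0_N\\ 0_N & (W^{\mathsf R})^{-1}\end{bmatrix}
= \big(Y^{\mathsf L}_n\big)_-\begin{bmatrix}I & W\\ 0_N & I\end{bmatrix}\begin{bmatrix}W^{\mathsf L} & 0_N\\ 0_N & (W^{\mathsf R})^{-1}\end{bmatrix}\\
&= \big(Z^{\mathsf L}_n\big)_-\begin{bmatrix}W^{\mathsf L} & 0_N\\ 0_N & (W^{\mathsf R})^{-1}\end{bmatrix}^{-1}\begin{bmatrix}I & W\\ 0_N & I\end{bmatrix}\begin{bmatrix}W^{\mathsf L} & 0_N\\ 0_N & (W^{\mathsf R})^{-1}\end{bmatrix},
\end{align*}
and the conjugated jump matrix is $\left[\begin{smallmatrix}(W^{\mathsf L})^{-1} & 0\\ 0 & W^{\mathsf R}\end{smallmatrix}\right]\left[\begin{smallmatrix}I & W\\ 0 & I\end{smallmatrix}\right]\left[\begin{smallmatrix}W^{\mathsf L} & 0\\ 0 & (W^{\mathsf R})^{-1}\end{smallmatrix}\right]=\left[\begin{smallmatrix}I & (W^{\mathsf L})^{-1}W(W^{\mathsf R})^{-1}\\ 0 & I\end{smallmatrix}\right]$. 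The crucial cancellation is precisely the factorization hypothesis i): $(W^{\mathsf L})^{-1}W(W^{\mathsf R})^{-1}=(W^{\mathsf L})^{-1}W^{\mathsf L}W^{\mathsf R}(W^{\mathsf R})^{-1}=I$, which yields $\big(Z^{\mathsf L}_n\big)_+=\big(Z^{\mathsf L}_n\big)_-\left[\begin{smallmatrix}I & I\\ 0_N & I\end{smallmatrix}\right]$ as claimed.

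For the right version I would either repeat the computation using (RH3) of Theorem~\ref{teo:RRHP} and the definition~\eqref{eq:zetan}, obtaining $\big(Z^{\mathsf R}_n\big)_+=\left[\begin{smallmatrix}I & 0_N\\ I & I\end{smallmatrix}\right]\big(Z^{\mathsf R}_n\big)_-$ by the same factorization cancellation, or more economically invoke the already-established relation~\eqref{eq:znright}, $Z^{\mathsf R}_n=\left[\begin{smallmatrix}0 & -I\\ I & 0\end{smallmatrix}\right](Z^{\mathsf L}_n)^{-1}\left[\begin{smallmatrix}0 & I\\ -I & 0\end{smallmatrix}\right]$: taking boundary values, inverting the left jump $\big((Z^{\mathsf L}_n)^{-1}\big)_+=\left[\begin{smallmatrix}I & I\\ 0_N & I\end{smallmatrix}\right]^{-1}\big((Z^{\mathsf L}_n)^{-1}\big)_-=\left[\begin{smallmatrix}I & -I\\ 0_N & I\end{smallmatrix}\right]\big((Z^{\mathsf L}_n)^{-1}\big)_-$, and conjugating by $\left[\begin{smallmatrix}0 & -I\\ I & 0\end{smallmatrix}\right]$ turns the upper-triangular unipotent factor into the lower-triangular one $\left[\begin{smallmatrix}I & 0_N\\ I & I\end{smallmatrix}\right]$. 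Finally, for uniqueness/characterization one observes the converse: any matrix satisfying i), ii), iii) differs from $Z^{\mathsf L}_n$ by a matrix with no jump on $\gamma$ and trivial asymptotics, hence by Liouville equals the identity — but since the proposition only asserts that $Z^{\mathsf L}_n,Z^{\mathsf R}_n$ \emph{possess} these properties, this paragraph can be omitted or stated briefly. The only genuine obstacle worth flagging is the holomorphicity away from $\gamma$ at the (discrete) zeros of $\det W^{\mathsf R}$; this is handled by remarking that in the relevant applications $W^{\mathsf L},W^{\mathsf R}$ are taken invertible on the domain of interest, or by the removable-singularity argument above using that $Y^{\mathsf L}_n$ compensates.
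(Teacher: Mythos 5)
Your proposal is correct and follows essentially the same route as the paper: holomorphicity and the asymptotics are inherited directly from $Y_n^{\mathsf L}$, $Y_n^{\mathsf R}$, the constant jump comes from conjugating the Riemann--Hilbert jump matrix by $\operatorname{diag}\big(W^{\mathsf L},(W^{\mathsf R})^{-1}\big)$ and cancelling $W=W^{\mathsf L}W^{\mathsf R}$, and the right case is transferred via~\eqref{eq:znright}. The one obstacle you flag --- possible zeros of $\det W^{\mathsf R}$ --- is in fact excluded by assumption iii): since $h^{\mathsf R}=(W^{\mathsf R})^{-1}(W^{\mathsf R})'$ is entire, Liouville's formula gives $\det W^{\mathsf R}(z)=\det W^{\mathsf R}(z_0)\exp\big(\int_{z_0}^{z}\operatorname{tr}h^{\mathsf R}\big)$, which never vanishes, so the diagonal factor is entire and invertible everywhere and no removable-singularity argument is needed.
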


\begin{proof}
We only give the proofs for the left case because their right 
ones 
follows from~\eqref{eq:znright}. 

\hangindent=.9cm \hangafter=1
{\noindent}$\phantom{ol}${\rm i)}
 As the $ W^{\mathsf L} (z)$ and $ W^{\mathsf R} (z)$ are matrices of entire functions the holomorphity properties of $Z^{\mathsf L}_n$ is inherit from that of the fundamental matrices $Y_n^{\mathsf L}$.

\hangindent=.9cm \hangafter=1
{\noindent}$\phantom{ol}${\rm ii)}
It follows from the asymptotic of the fundamental matrices.

\hangindent=.9cm \hangafter=1
{\noindent}$\phantom{ol}${\rm iii)} 
From the definition of $Z^{\mathsf L}_n (z)$ we have
\begin{gather*}
\big(Z^{\mathsf L}_n (z) \big)_+ = \big( Y^{\mathsf L}_n (z) \big)_+ 
\begin{bmatrix} 
 W^{\mathsf L} (z) & 0_N \\ 
0_N & ( W^{\mathsf R} (z))^{-1} 
\end{bmatrix} \, ,
\end{gather*}
and taking into account Theorem~\ref{teo:LRHP} we arrive to
\begin{gather*}
\big(Z^{\mathsf L}_n (z) \big)_+ = \big( Y^{\mathsf L}_n (z) \big)_-
\begin{bmatrix} 
I & W^{\mathsf L} (z) W^{\mathsf R} (z) \\ 
0_N & I
\end{bmatrix}
\begin{bmatrix} 
 W^{\mathsf L} (z) & 0_N \\ 
0_N & ( W^{\mathsf R} (z))^{-1} 
\end{bmatrix} \, ;
\end{gather*}
now, as
\begin{gather*}
\begin{bmatrix} 
I & W^{\mathsf L} (z) W^{\mathsf R} (z) \\ 
0_N & I
\end{bmatrix}
\begin{bmatrix} 
 W^{\mathsf L} (z) & 0_N \\ 
0_N & ( W^{\mathsf R} (z))^{-1} 
\end{bmatrix}
 =
\begin{bmatrix} 
 W^{\mathsf L} (z) & 0_N \\ 
0_N & ( W^{\mathsf R} (z))^{-1}
\end{bmatrix}
\begin{bmatrix} 
I & I \\ 
0_N & I 
\end{bmatrix} \, ,
\end{gather*}
we get the desired \emph{constant} jump condition for $Z^{\mathsf L}_n (z)$.
\end{proof}

\begin{rem}
 For the symmetric and Hermitian reductions we have 
 \begin{align*}
W^\mathsf L(z)&=\rho(z), & W^\mathsf R(z)&=(\rho(z))^\top, & W(z)&=\rho(z)\big(\rho(z)\big)^\top,&  Z^\mathsf R(z)
&=\big( Z^\mathsf L(z) \big)^\top, & \text{symmetric},\\
W^\mathsf L(z)&=\rho(z), & W^\mathsf R(z)&=(\rho(\bar z))^\dagger, & W&=\rho(z)\big(\rho(\bar z)\big)^\dagger, & Z^\mathsf R(z)
&=\big( Z^\mathsf L(\bar z) \big)^\dagger, & \text{Hermitian}.
 \end{align*}
 In both cases, we will use the notation
 \begin{align*}
 Z_n(z):=Z_n^\mathsf L(z).
 \end{align*}
\end{rem}

\subsection{Structure matrices}

In parallel to the matrices $Z^{\mathsf L}_n(z)$ and $Z^{\mathsf R}_n(z)$, for each factorization $ W = W^{\mathsf L} W^{\mathsf R}$, we introduce what we call \emph{structure matrices} given in terms of the \emph{right derivative} and \emph{left derivative} (logarithmic derivatives), respectively,
\begin{align*} 
M^{\mathsf L}_n (z) & 
 :
= \big(Z^{\mathsf L}_n(z) \big)^{\prime} \big(Z^{\mathsf L}_{n}(z)\big)^{-1},&
{M}^{\mathsf R}_n (z) & 
 :
= \big ({Z}^{\mathsf R}_{n}(z)\big)^{-1} \big(Z^{\mathsf R}_n(z)\big)^{\prime}.
\end{align*}
It is not difficult to prove that
\begin{align*}
{M}^{\mathsf R}_{n}(z) & = -
\begin{bmatrix}
0 & -I \\ I & 0
\end{bmatrix}
M_n^{\mathsf L} (z)
\begin{bmatrix}
0 & I \\ -I & 0
\end{bmatrix} , & n \in \mathbb N .
\end{align*}

\begin{pro} \label{prop:Mn}
The following properties hold:

\hangindent=.9cm \hangafter=1
{\noindent}$\phantom{ol}${\rm i)}
The structure matrices $M^{\mathsf L}_n (z) $ and $ M^{\mathsf R}_n (z) $ are, for each $n \in \mathbb N$, matrices of entire functions in the complex~plane.

\hangindent=.9cm \hangafter=1
{\noindent}$\phantom{ol}${\rm ii)}
The transfer matrix satisfies
 \begin{align*}
 T^{\mathsf L}_n (z)Z_n^{\mathsf L}(z) & = Z^{\mathsf L}_{n+1}(z) ,& {Z}^{\mathsf R}_n(z){T}^{\mathsf R}_n (z) &=
 {Z}^{\mathsf R}_{n+1}(z), & n \in \mathbb N .
 \end{align*}

\hangindent=.9cm \hangafter=1
{\noindent}$\phantom{ol}${\rm iii)}
The zero curvature formulas 
\begin{align} \label{eq:zerocurvature1}
\begin{bmatrix}
I & 0_N\\
0_N & 0_N
\end{bmatrix}
 & = M^{\mathsf L}_{n+1} (z) T^{\mathsf L}_n(z) - T^{\mathsf L}_n (z) M^{\mathsf L}_{n} (z),\\
\begin{bmatrix}
I & 0_N \\
0_N & 0_N
\end{bmatrix}
 & =
T^{\mathsf R}_n (z) \, M^{\mathsf R}_{n+1} (z)
- M^{\mathsf R}_{n} (z) T^{\mathsf R}_n (z) , 
\label{eq:zerocurvature2}
\end{align}
$n \in \mathbb N $, are fulfilled.

\hangindent=.9cm \hangafter=1
{\noindent}$\phantom{ol}${\rm iv)}
The\emph{ second order} zero curvature formulas 
\begin{align} \label{eq:zerocurvature3}
 \begin{bmatrix}
 I & 0_N\\
 0_N & 0_N
 \end{bmatrix}
 M^{\mathsf L}_{n} (z)+ M^{\mathsf L}_{n+1} (z)
 \begin{bmatrix}
 I & 0_N\\
 0_N & 0_N
 \end{bmatrix}
&= \big(M^{\mathsf L}_{n+1} (z)\big)^2 
T^{\mathsf L}_n(z) - T^{\mathsf L}_n (z) \big( M^{\mathsf L}_{n} (z)\big)^2,\\
\begin{bmatrix}
I & 0_N\\
0_N & 0_N
\end{bmatrix}M^{\mathsf R}_{n+1} (z)+M^{\mathsf R}_{n} (z) \begin{bmatrix}
I & 0_N\\
0_N & 0_N
\end{bmatrix}
&=
T^{\mathsf R}_n (z) \big(M^{\mathsf R}_{n+1} (z)\big)^2
- \big( M^{\mathsf R}_{n} (z) \big)^2 T^{\mathsf R}_n (z) , 
\label{eq:zerocurvature4}
\end{align}
$n \in \mathbb N $, are satisfied.
\end{pro}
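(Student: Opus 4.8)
The plan is to handle the four items in increasing order of difficulty, items (ii)--(iv) being essentially algebraic once (iii) is in place, and item (i) being the only one that genuinely uses the analytic hypotheses i)--iii). Item (ii) is immediate: starting from the packaged recursions $Y^{\mathsf L}_{n+1}(z)=T^{\mathsf L}_n(z)Y^{\mathsf L}_n(z)$ and its right analogue $Y^{\mathsf R}_{n+1}(z)=Y^{\mathsf R}_n(z)T^{\mathsf R}_n(z)$, I would multiply the first on the right by $\diag\big(W^{\mathsf L}(z),(W^{\mathsf R}(z))^{-1}\big)$ and invoke~\eqref{eq:zn} to get $T^{\mathsf L}_n(z)Z^{\mathsf L}_n(z)=Z^{\mathsf L}_{n+1}(z)$, and multiply the second on the left by $\diag\big(W^{\mathsf R}(z),(W^{\mathsf L}(z))^{-1}\big)$ and invoke~\eqref{eq:zetan} to get $Z^{\mathsf R}_n(z)T^{\mathsf R}_n(z)=Z^{\mathsf R}_{n+1}(z)$.

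For item (iii) I would differentiate $Z^{\mathsf L}_{n+1}=T^{\mathsf L}_nZ^{\mathsf L}_n$, obtaining $(Z^{\mathsf L}_{n+1})'=(T^{\mathsf L}_n)'Z^{\mathsf L}_n+T^{\mathsf L}_n(Z^{\mathsf L}_n)'$, and then multiply on the right by $(Z^{\mathsf L}_{n+1})^{-1}=(Z^{\mathsf L}_n)^{-1}(T^{\mathsf L}_n)^{-1}$, which is legitimate because $\det T^{\mathsf L}_n=1$ makes $T^{\mathsf L}_n$ invertible for every $z$. This gives $M^{\mathsf L}_{n+1}=(T^{\mathsf L}_n)'(T^{\mathsf L}_n)^{-1}+T^{\mathsf L}_nM^{\mathsf L}_n(T^{\mathsf L}_n)^{-1}$, i.e. $M^{\mathsf L}_{n+1}T^{\mathsf L}_n-T^{\mathsf L}_nM^{\mathsf L}_n=(T^{\mathsf L}_n)'$, and since the only $z$-dependence of $T^{\mathsf L}_n(z)$ is the term $zI$ in its top-left block, $(T^{\mathsf L}_n)'(z)=\begin{bmatrix} I & 0_N\\ 0_N & 0_N\end{bmatrix}$, which is~\eqref{eq:zerocurvature1}. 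Differentiating $Z^{\mathsf R}_{n+1}=Z^{\mathsf R}_nT^{\mathsf R}_n$ and multiplying on the left by $(Z^{\mathsf R}_{n+1})^{-1}$ yields~\eqref{eq:zerocurvature2} in exactly the same way, using $(T^{\mathsf R}_n)'(z)=\begin{bmatrix} I & 0_N\\ 0_N & 0_N\end{bmatrix}$.

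Item (iv) then drops out of (iii) algebraically. Writing $E:=\begin{bmatrix} I & 0_N\\ 0_N & 0_N\end{bmatrix}$, the left identity of (iii) reads $M^{\mathsf L}_{n+1}T^{\mathsf L}_n=T^{\mathsf L}_nM^{\mathsf L}_n+E$; left-multiplying by $M^{\mathsf L}_{n+1}$ and then substituting this same identity once more for the occurrence of $M^{\mathsf L}_{n+1}T^{\mathsf L}_n$ on the right gives $(M^{\mathsf L}_{n+1})^2T^{\mathsf L}_n=(T^{\mathsf L}_nM^{\mathsf L}_n+E)M^{\mathsf L}_n+M^{\mathsf L}_{n+1}E=T^{\mathsf L}_n(M^{\mathsf L}_n)^2+EM^{\mathsf L}_n+M^{\mathsf L}_{n+1}E$, and a rearrangement is precisely~\eqref{eq:zerocurvature3}; right-multiplying the right identity of (iii) by $M^{\mathsf R}_{n+1}$ produces~\eqref{eq:zerocurvature4} symmetrically.

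Item (i) is where I expect the real work to lie. The plan has two parts. First, $M^{\mathsf L}_n=(Z^{\mathsf L}_n)'(Z^{\mathsf L}_n)^{-1}$ is holomorphic on $\C\setminus\gamma$: $Z^{\mathsf L}_n$ is holomorphic there by Proposition~\ref{prop:zn}(i), and invertible there because~\eqref{eq:znright} exhibits $(Z^{\mathsf L}_n)^{-1}$ as $Z^{\mathsf R}_n$ pre- and post-multiplied by constant invertible matrices, again holomorphic by Proposition~\ref{prop:zn}(i); here the entirety of the logarithmic derivatives assumed in iii) is what prevents $M^{\mathsf L}_n$ from acquiring poles at possible zeros of $\det W^{\mathsf L}$ or $\det W^{\mathsf R}$. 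Second, $M^{\mathsf L}_n$ has no jump across $\gamma$: by Proposition~\ref{prop:zn}(iii) the jump of $Z^{\mathsf L}_n$ is the \emph{constant} matrix $\begin{bmatrix} I & I\\ 0_N & I\end{bmatrix}$, so differentiating $(Z^{\mathsf L}_n)_+=(Z^{\mathsf L}_n)_-\begin{bmatrix} I & I\\ 0_N & I\end{bmatrix}$ shows that $\big((Z^{\mathsf L}_n)'\big)_+$ and $\big((Z^{\mathsf L}_n)'\big)_-$ differ by the same constant right factor, which cancels when one forms $M^{\mathsf L}_n$, so $(M^{\mathsf L}_n)_+=(M^{\mathsf L}_n)_-$ on $\gamma$. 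A function holomorphic off $\gamma$ with matching boundary values extends holomorphically across $\gamma$, so $M^{\mathsf L}_n$ is entire; the one subtlety I would watch for is the behaviour at the endpoints of $\gamma$, should $\gamma$ be an arc, which is what forces one to use the regularity assumed on $W$. The claim for $M^{\mathsf R}_n$ then follows from the relation $M^{\mathsf R}_n=-\begin{bmatrix} 0 & -I\\ I & 0\end{bmatrix}M^{\mathsf L}_n\begin{bmatrix} 0 & I\\ -I & 0\end{bmatrix}$ recorded just before the proposition (or by rerunning the argument with the right constant jump). The conceptual crux, and the reason a direct computation with $Y^{\mathsf L}_n$ would be awkward, is that trading $Y^{\mathsf L}_n$ --- whose jump involves the non-constant matrix $W(z)$ --- for $Z^{\mathsf L}_n$ with its \emph{constant} jump is exactly what turns the logarithmic derivative into a single-valued, entire object.
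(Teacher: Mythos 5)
Your proposal is correct and follows essentially the same route as the paper: item (ii) by pushing the transfer relation through the definition of $Z_n^{\mathsf L}$, item (iii) by differentiating that relation and using $(T_n^{\mathsf L})'=\left[\begin{smallmatrix} I & 0_N\\ 0_N & 0_N\end{smallmatrix}\right]$, item (iv) by left-multiplying~\eqref{eq:zerocurvature1} by $M_{n+1}^{\mathsf L}$ and substituting~\eqref{eq:zerocurvature1} once more, and item (i) by the cancellation of the \emph{constant} jump factor in the logarithmic derivative. Your treatment of item (i) is in fact slightly more complete than the paper's, which records only the matching of the $\pm$ boundary values and does not spell out the holomorphy and invertibility of $Z_n^{\mathsf L}$ off $\gamma$ or the endpoint behaviour.
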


\begin{proof}
 Again we only give the proofs for the left case. 
We begin to prove that the sequence of matrix functions $\big\{ M^{\mathsf L}_n (z)\big\}_{n\in\N} $ is a sequence of matrices with coefficients given by entire functions.
 In fact,
 $\big( M^{\mathsf L}_n \big)_+ = 
\Big( \big(Z^{\mathsf L}_n\big)^{\prime}\Big)_+ 
 \Big( \big(Z^{\mathsf L}_n\big)^{-1}\Big)_+ $, and applying the \emph{constant} jump condition we get
\begin{gather*} 
\big( M^{\mathsf L}_n (z) \big)_+ = 
\Big( \big(Z^{\mathsf L}_n\big)^{\prime}\Big)_-
\begin{bmatrix} 
I & I \\ 
0_N & I 
\end{bmatrix}^{-1}
 \begin{bmatrix} I & I \\ 0_N & I \end{bmatrix} 
 \Big( \big(Z^{\mathsf L}_n\big)^{-1}\Big)_- = \big( M^{\mathsf L}_n (z) \big)_- .
\end{gather*}
It follows from the definition of $Z_n^{\mathsf L}$ that
\begin{gather*} 
 T^{\mathsf L}_n (z) = Y^{\mathsf L}_{n+1} (z) \big( Y^{\mathsf L}_{n} (z) \big)^{-1}= Z^{\mathsf L}_{n+1}(z)\big( Z^{\mathsf L}_{n} (z) \big)^{-1}.
\end{gather*}
Taking derivatives with respect to $z $ on $ T_n(z) $ we get
\begin{align*} 
\big( T_{n}^{\mathsf L} (z)\big)^{\prime}&= 
\big( Z_{n+1}^{\mathsf L}(z)\big)^{\prime} \big( Z_{n}^{\mathsf L}(z)\big)^{-1} 
- Z^{\mathsf L}_{n+1} (z) \big( Z_{n}^{\mathsf L}(z)\big)^{-1} 
\big( Z_{n}^{\mathsf L}(z)\big)^{\prime} \big( Z_{n}^{\mathsf L}(z)\big)^{-1}, & \ n &\in \N,
\end{align*}
and so, taking into account that
\begin{gather*} 
\big( Z_{n+1}^{\mathsf L}(z)\big)^{\prime} \big( Z_{n}^{\mathsf L}(z)\big)^{-1} 
= \big( Z_{n+1}^{\mathsf L}(z)\big)^{\prime} \big(Z_{n+1}^{\mathsf L}(z)\big)^{-1} Z_{n+1}^{\mathsf L}(z) \big( Z_{n}^{\mathsf L}(z)\big)^{-1} 
= T_{n+1}^\mathsf L M_n^\mathsf L \, ,
\end{gather*}
we get~\eqref{eq:zerocurvature1}. Using the same ideas we derive~\eqref{eq:zerocurvature2}.
Now, for~\eqref{eq:zerocurvature3} just replace the expressions for the derivative of the transfer matrix in~\eqref{eq:zerocurvature1}.
Multiplying~\eqref{eq:zerocurvature1} on the left by
$M_{n+1}^{\mathsf L}$ we~get
\begin{gather*}
M_{n+1}^{\mathsf L} 
\begin{bmatrix}
I & 0_N \\
0_N& 0_N
\end{bmatrix}
= \big(M^{\mathsf L}_{n+1} (z) \big)^2 T^{\mathsf L}_n(z) - 
\big(M_{n+1}^{\mathsf L} T^{\mathsf L}_n (z) \big) M^{\mathsf L}_{n} (z),
\end{gather*}
and again by~\eqref{eq:zerocurvature1} applied to the term $M_{n+1}^{\mathsf L} T^{\mathsf L}_n (z)$ we get~\eqref{eq:zerocurvature3}.
\end{proof}

Higher order transfer matrices 
\begin{align*}
T^\mathsf L_{n,\ell}(z)&
 : 
=T^\mathsf L_{n+\ell}(z)\cdots T^\mathsf L_{n}(z),&
T^\mathsf R_{n,\ell}(z)& 
 : 
=T^\mathsf R_{n}(z)\cdots T^\mathsf L_{n+\ell}(z),
\end{align*}
 satisfy
\begin{align*}
Y^\mathsf L_{n+\ell}(z)&=T^\mathsf L_{n,\ell}(z)Y^\mathsf L_{n}(z), &
Y^\mathsf R_{n+\ell}(z)&=Y^\mathsf R_{n}(z)T^\mathsf R_{n,\ell}(z).
\end{align*}
\begin{pro}
The following zero-curvature conditions hold, for all $n, \ell \in \mathbb N$,
\begin{align*}
\big( T_{n,\ell}^{\mathsf L}(z) \big)^{\prime} &= M^{\mathsf L}_{n+\ell+1} (z) T^{\mathsf L}_n(z) - T^{\mathsf L}_n (z) M^{\mathsf L}_{n} (z),&
\big(T^{\mathsf R}_{n,\ell} (z)\big)^{\prime} &=
T^{\mathsf R}_n (z) \, M^{\mathsf R}_{n+\ell+1} (z)
- M^{\mathsf R}_{n} (z) T^{\mathsf R}_n (z).
\end{align*}
\end{pro}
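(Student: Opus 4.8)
The plan is to derive both identities from the one-step zero-curvature relations already recorded in Proposition~\ref{prop:Mn} together with the transfer relations $T^{\mathsf L}_n(z)\,Z^{\mathsf L}_n(z)=Z^{\mathsf L}_{n+1}(z)$ and $Z^{\mathsf R}_n(z)\,T^{\mathsf R}_n(z)=Z^{\mathsf R}_{n+1}(z)$ of part ii) of that proposition. Iterating the first of these, that is applying $T^{\mathsf L}_n$, then $T^{\mathsf L}_{n+1}$, up to $T^{\mathsf L}_{n+\ell}$, one obtains the telescoping identity
\[
T^{\mathsf L}_{n,\ell}(z)\,Z^{\mathsf L}_n(z)=T^{\mathsf L}_{n+\ell}(z)\cdots T^{\mathsf L}_n(z)\,Z^{\mathsf L}_n(z)=Z^{\mathsf L}_{n+\ell+1}(z),
\]
so that the higher order transfer matrix factorizes as $T^{\mathsf L}_{n,\ell}(z)=Z^{\mathsf L}_{n+\ell+1}(z)\big(Z^{\mathsf L}_n(z)\big)^{-1}$; equivalently, this is immediate from $T^{\mathsf L}_k(z)=Z^{\mathsf L}_{k+1}(z)\big(Z^{\mathsf L}_k(z)\big)^{-1}$ and the definition of $T^{\mathsf L}_{n,\ell}$.

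Next I would differentiate this factorization in $z$, using $\big(\Phi^{-1}\big)'=-\Phi^{-1}\Phi'\Phi^{-1}$, to get
\[
\big(T^{\mathsf L}_{n,\ell}\big)'=\big(Z^{\mathsf L}_{n+\ell+1}\big)'\big(Z^{\mathsf L}_n\big)^{-1}-Z^{\mathsf L}_{n+\ell+1}\big(Z^{\mathsf L}_n\big)^{-1}\big(Z^{\mathsf L}_n\big)'\big(Z^{\mathsf L}_n\big)^{-1}.
\]
Inserting the factor $\big(Z^{\mathsf L}_{n+\ell+1}\big)^{-1}Z^{\mathsf L}_{n+\ell+1}$ into the first term and recalling the definitions $M^{\mathsf L}_{n+\ell+1}=\big(Z^{\mathsf L}_{n+\ell+1}\big)'\big(Z^{\mathsf L}_{n+\ell+1}\big)^{-1}$ and $M^{\mathsf L}_n=\big(Z^{\mathsf L}_n\big)'\big(Z^{\mathsf L}_n\big)^{-1}$, together with $Z^{\mathsf L}_{n+\ell+1}\big(Z^{\mathsf L}_n\big)^{-1}=T^{\mathsf L}_{n,\ell}$, the right-hand side collapses to $M^{\mathsf L}_{n+\ell+1}\,T^{\mathsf L}_{n,\ell}-T^{\mathsf L}_{n,\ell}\,M^{\mathsf L}_n$, which is the asserted left identity. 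The right identity follows mutatis mutandis: from $Z^{\mathsf R}_n(z)\,T^{\mathsf R}_n(z)=Z^{\mathsf R}_{n+1}(z)$ one iterates to $Z^{\mathsf R}_n(z)\,T^{\mathsf R}_{n,\ell}(z)=Z^{\mathsf R}_{n+\ell+1}(z)$, hence $T^{\mathsf R}_{n,\ell}=\big(Z^{\mathsf R}_n\big)^{-1}Z^{\mathsf R}_{n+\ell+1}$, and differentiating while using $M^{\mathsf R}_k=\big(Z^{\mathsf R}_k\big)^{-1}\big(Z^{\mathsf R}_k\big)'$ yields the claim.

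An equally short alternative is induction on $\ell$: the case $\ell=0$ is exactly \eqref{eq:zerocurvature1} (resp.\ \eqref{eq:zerocurvature2}), since $T^{\mathsf L}_{n,0}=T^{\mathsf L}_n$ and $\big(T^{\mathsf L}_n\big)'$ is the constant matrix appearing on the left of~\eqref{eq:zerocurvature1}; for the inductive step one writes $T^{\mathsf L}_{n,\ell+1}=T^{\mathsf L}_{n+\ell+1}\,T^{\mathsf L}_{n,\ell}$, applies the Leibniz rule, substitutes the inductive hypothesis for $\big(T^{\mathsf L}_{n,\ell}\big)'$ and \eqref{eq:zerocurvature1} with $n$ replaced by $n+\ell+1$ for $\big(T^{\mathsf L}_{n+\ell+1}\big)'$, and checks that the two middle terms, each equal up to sign to $T^{\mathsf L}_{n+\ell+1}M^{\mathsf L}_{n+\ell+1}T^{\mathsf L}_{n,\ell}$, cancel. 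I do not anticipate a genuine obstacle: by Proposition~\ref{prop:Mn}(i) all the objects involved are entire matrix functions on $\C$, so no analytic subtlety on the curve $\gamma$ intervenes, and the only thing requiring attention is the non-commutativity, i.e.\ keeping the order of the matrix factors throughout (and, in the inductive version, verifying the cancellation of the two cross terms).
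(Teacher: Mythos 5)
Your argument is correct, and your primary route is genuinely different from the paper's. The paper proves the proposition by induction on $\ell$: the base case $\ell=0$ is the already established zero-curvature relation~\eqref{eq:zerocurvature1} (resp.~\eqref{eq:zerocurvature2}), and the inductive step writes $T^{\mathsf L}_{n,\ell+1}=T^{\mathsf L}_{n+\ell+1}T^{\mathsf L}_{n,\ell}$, applies Leibniz, substitutes the hypothesis and the one-step relation at $n+\ell+1$, and observes the cancellation of the two cross terms $\pm\,T^{\mathsf L}_{n+\ell+1}M^{\mathsf L}_{n+\ell+1}T^{\mathsf L}_{n,\ell}$ — exactly the ``equally short alternative'' you sketch at the end, so that part of your proposal reproduces the paper's proof. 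Your main argument instead telescopes the transfer relation to get the factorization $T^{\mathsf L}_{n,\ell}=Z^{\mathsf L}_{n+\ell+1}\big(Z^{\mathsf L}_{n}\big)^{-1}$ and differentiates it directly, recovering the identity in one stroke from the definitions of $M^{\mathsf L}_{n}$ and $M^{\mathsf L}_{n+\ell+1}$ as logarithmic derivatives; this avoids induction altogether and makes transparent why the formula holds (it is just the cocycle property of $Z^{\mathsf L}_{n}$), at the modest cost of invoking invertibility of the $Z$'s, which is guaranteed since $\det Y^{\mathsf L}_{n}=1$ and the factors $W^{\mathsf L}, W^{\mathsf R}$ are assumed nonsingular. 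One point worth flagging: what you actually derive (and what the paper's own inductive proof actually establishes) is $\big(T^{\mathsf L}_{n,\ell}\big)^{\prime}=M^{\mathsf L}_{n+\ell+1}T^{\mathsf L}_{n,\ell}-T^{\mathsf L}_{n,\ell}M^{\mathsf L}_{n}$, with $T^{\mathsf L}_{n,\ell}$ on the right-hand side; the statement as printed has $T^{\mathsf L}_{n}$ there, which is evidently a typo (it is only consistent at $\ell=0$), so you should not call the displayed formula ``the asserted left identity'' without noting the correction.
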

\begin{proof}
As before we only give a discussion for the left situation. It is done by induction, assuming that it holds for $\ell$ we prove it for $\ell+1$:
\begin{align*}
\big( T_{n,\ell+1}^{\mathsf L}(z) \big)^{\prime} &= \big( T_{n+\ell+1}^{\mathsf L}(z) T_{n,\ell}^{\mathsf L}(z) \big)^{\prime} 
=\big( T_{n+\ell+1}^{\mathsf L}(z) \big)^{\prime} T_{n,\ell}^{\mathsf L}(z)
+T_{n+\ell+1}^{\mathsf L}(z) \big( T_{n,\ell}^{\mathsf L}(z) \big)^{\prime} 
\\ &
=\begin{multlined}[t][0.75\textwidth]
\big( M^{\mathsf L}_{n+\ell+2} (z) T^{\mathsf L}_{n+\ell+1}(z) - T^{\mathsf L}_{n+\ell+1} (z) M^{\mathsf L}_{n+\ell+1} (z)\big) T_{n,\ell}^{\mathsf L}(z)
 \\+
T_{n+\ell+1}^{\mathsf L}(z) \big(M^{\mathsf L}_{n+\ell+1} (z) T^{\mathsf L}_{n,\ell}(z) - T^{\mathsf L}_{n,\ell} (z) M^{\mathsf L}_{n} (z)\big),
\end{multlined}
 \\ &
=
M^{\mathsf L}_{n+\ell+2} (z) T^{\mathsf L}_{n+\ell+1}(z) T_{n,\ell}^{\mathsf L}(z)
- T_{n+\ell+1}^{\mathsf L}(z) T^{\mathsf L}_{n,\ell} (z) M^{\mathsf L}_{n} (z),
\end{align*}
and the result is proven; recalling that for $\ell=0$ it is just the already proven zero-curvature condition.
\end{proof}

\begin{pro}[Computing the structure matrices]
If the subindex $\pmb +$ indicates that only the positive powers of the asymptotic expansion are kept, for each factorization $ W = W^{\mathsf L} W^{\mathsf R}$, we have for all $n \in \mathbb N$, the following power expansions for the structure matrices
\begin{align}\label{eq:MS}
M^{\mathsf L}_n (z) &= \Bigg(S^{\mathsf L}_n (z)
\begin{bmatrix} 
\big( W^{\mathsf L} (z) \big)^{\prime} \big( W^{\mathsf L} (z) \big)^{-1} & 0_N \\ 
0_N& -\big( W^{\mathsf R} (z) \big)^{-1} \big( W^{\mathsf R}(z) \big)^{\prime} 
\end{bmatrix} 
\big(S^{\mathsf L}_n(z)\big)^{-1} \Bigg)_{\pmb +},
 \\
\label{eq:MSR}
{M}^{\mathsf R}_n (z)& = \Bigg(
\big({S}^{\mathsf R}_n (z)\big)^{-1} 
\begin{bmatrix}\big( W^{\mathsf R} (z) \big)^{-1} \big( W^{\mathsf R} (z) \big)^{\prime} & 0_N \\ 
0_N& - \big( W^{\mathsf L}(z) \big)^{\prime} \big( W^{\mathsf L} (z) \big)^{-1} 
\end{bmatrix} 
{S}^{\mathsf R}_n (z) \Bigg)_{\pmb +}
 .
\end{align}
\end{pro}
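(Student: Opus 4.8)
The plan is to pull everything back to the normalized fundamental matrices $S^{\mathsf L}_n(z)$ and $S^{\mathsf R}_n(z)$, whose expansions at infinity (and those of their inverses) are already displayed above, and then to exploit that the structure matrices are \emph{entire}, as established in Proposition~\ref{prop:Mn}(i). The first step is to rewrite the constant jump fundamental matrix through $S^{\mathsf L}_n$. Since $Y^{\mathsf L}_n(z)=S^{\mathsf L}_n(z)\diag(Iz^{n},Iz^{-n})$ and the scalar factors $z^{\pm n}$ commute with the block-diagonal weight factor in~\eqref{eq:zn}, one has $Z^{\mathsf L}_n(z)=S^{\mathsf L}_n(z)D_n(z)$ with $D_n(z):=\diag\big(z^{n}W^{\mathsf L}(z),\,z^{-n}(W^{\mathsf R}(z))^{-1}\big)$. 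Differentiating and inserting into $M^{\mathsf L}_n=(Z^{\mathsf L}_n)'(Z^{\mathsf L}_n)^{-1}$ gives
\begin{align*}
M^{\mathsf L}_n(z)=(S^{\mathsf L}_n(z))'(S^{\mathsf L}_n(z))^{-1}+S^{\mathsf L}_n(z)\,\big(D_n'(z)D_n(z)^{-1}\big)\,(S^{\mathsf L}_n(z))^{-1}.
\end{align*}

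A block-by-block computation of $D_n'D_n^{-1}$, using $\big((W^{\mathsf R})^{-1}\big)'=-(W^{\mathsf R})^{-1}(W^{\mathsf R})'(W^{\mathsf R})^{-1}$ in the lower block, yields
\begin{align*}
D_n'(z)D_n(z)^{-1}=\frac{n}{z}\begin{bmatrix}I&0_N\\0_N&-I\end{bmatrix}+\begin{bmatrix}(W^{\mathsf L})'(W^{\mathsf L})^{-1}&0_N\\0_N&-(W^{\mathsf R})^{-1}(W^{\mathsf R})'\end{bmatrix}.
\end{align*}
Hence $M^{\mathsf L}_n$ is the sum of three terms: $(S^{\mathsf L}_n)'(S^{\mathsf L}_n)^{-1}$, then $\tfrac{n}{z}\,S^{\mathsf L}_n\diag(I,-I)(S^{\mathsf L}_n)^{-1}$, and finally $S^{\mathsf L}_n\diag\big((W^{\mathsf L})'(W^{\mathsf L})^{-1},-(W^{\mathsf R})^{-1}(W^{\mathsf R})'\big)(S^{\mathsf L}_n)^{-1}$, the last being exactly the bracketed matrix in~\eqref{eq:MS}.

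Next I would discard the negative tails. From $S^{\mathsf L}_n(z)=I+O(z^{-1})$ and $(S^{\mathsf L}_n(z))^{-1}=I+O(z^{-1})$ (both expansions are displayed above) one gets $(S^{\mathsf L}_n)'(S^{\mathsf L}_n)^{-1}=O(z^{-2})$ and $\tfrac{n}{z}S^{\mathsf L}_n\diag(I,-I)(S^{\mathsf L}_n)^{-1}=\tfrac{n}{z}\diag(I,-I)+O(z^{-2})$; both involve only strictly negative powers of $z$ and are therefore annihilated by $(\,\cdot\,)_{\pmb +}$. On the other hand, by Proposition~\ref{prop:Mn}(i) the matrix $M^{\mathsf L}_n$ is entire, so its expansion at infinity carries no negative powers and $M^{\mathsf L}_n=(M^{\mathsf L}_n)_{\pmb +}$. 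Applying $(\,\cdot\,)_{\pmb +}$ to the three-term splitting kills the first two summands and leaves precisely~\eqref{eq:MS}. The identity~\eqref{eq:MSR} follows by running the same computation with $Z^{\mathsf R}_n(z)=\diag\big(z^{n}W^{\mathsf R}(z),\,z^{-n}(W^{\mathsf L}(z))^{-1}\big)\,S^{\mathsf R}_n(z)$ and $M^{\mathsf R}_n=(Z^{\mathsf R}_n)^{-1}(Z^{\mathsf R}_n)'$, or equivalently by transporting~\eqref{eq:MS} through~\eqref{eq:znright} together with the relation between $M^{\mathsf R}_n$ and $M^{\mathsf L}_n$ recorded just before Proposition~\ref{prop:Mn}.

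The one point that deserves care — the ``main obstacle'' — is the inference $M^{\mathsf L}_n \text{ entire}\Rightarrow M^{\mathsf L}_n=(M^{\mathsf L}_n)_{\pmb +}$: it requires the expansion of $M^{\mathsf L}_n$ at infinity to be a genuine (finite-positive-part) power series. This is guaranteed because the logarithmic derivatives $(W^{\mathsf L})'(W^{\mathsf L})^{-1}$ and $(W^{\mathsf R})^{-1}(W^{\mathsf R})'$ are entire — and, in all the applications treated in the paper, polynomial — so the bracketed matrix in~\eqref{eq:MS} has a bona fide Laurent expansion at infinity, $M^{\mathsf L}_n$ has polynomial growth, and an entire matrix function of polynomial growth is a matrix polynomial equal to the non-negative-power part of that Laurent tail. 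It is exactly the holomorphy across $\gamma$ supplied by Proposition~\ref{prop:Mn}(i) that removes the jump carried by the negative tail of $S^{\mathsf L}_n\diag(\,\cdots)(S^{\mathsf L}_n)^{-1}$; once that is granted, everything else is bookkeeping with the expansions of $S^{\mathsf L}_n$, $S^{\mathsf R}_n$ and their inverses displayed above.
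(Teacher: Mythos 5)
Your proposal is correct and follows essentially the same route as the paper: write $Z^{\mathsf L}_n=S^{\mathsf L}_n\,\operatorname{diag}\big(z^nW^{\mathsf L},z^{-n}(W^{\mathsf R})^{-1}\big)$, compute the right logarithmic derivative to split $M^{\mathsf L}_n$ into $(S^{\mathsf L}_n)'(S^{\mathsf L}_n)^{-1}$ plus the $\tfrac nz$-term plus the bracketed conjugated block, and then invoke the entireness of $M^{\mathsf L}_n$ from Proposition~\ref{prop:Mn} to conclude that its expansion at infinity is a pure power expansion, so the strictly negative tails drop out under $(\,\cdot\,)_{\pmb +}$. Your extra remark spelling out why ``entire'' forces $M^{\mathsf L}_n=(M^{\mathsf L}_n)_{\pmb +}$ is a slightly more careful rendering of a point the paper states tersely, but it is the same argument.
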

\begin{proof}
Using assumption i) in Proposition~\ref{prop:Mn}, we find the expressions for the left structure matrix, $ M_n^{\mathsf L}(z)$, in terms of $ S_n^{\mathsf L} (z) $ and $ W(z)= W^{\mathsf L} (z) W^{\mathsf R}(z) $. For doing so we require 
the use of the definition of $ S_n^{\mathsf L}(z) $, i.e.
\begin{align*} 
 Z^{\mathsf L}_n (z) = S^{\mathsf L}_n (z) 
 \begin{bmatrix} z^n W^{\mathsf L} (z) &0_N \\ 0_N & z^{-n} \big( W^{\mathsf R} (z) \big)^{-1} \end{bmatrix} , 
\end{align*}
and consequently, we find 
\begin{multline*} 
 M^{\mathsf L}_n (z) =
 \big( S_n^{\mathsf L} (z) \big)^{\prime} \big( S_n^{\mathsf L} (z) \big)^{-1} 
 \\ + 
S_n^{\mathsf L} (z) 
\begin{bmatrix}\big( W^{\mathsf L} (z) \big)^{\prime} \big( W^{\mathsf L} (z) \big)^{-1} +
{n} {z}^{-1} & 0_N \\ 
0_N& -\big( W^{\mathsf R} (z) \big)^{-1} \big( W^{\mathsf R}(z) \big)^{\prime} -
{n} {z}^{-1} 
\end{bmatrix} 
\big(S_n^{\mathsf L} (z)\big)^{-1}.
\end{multline*}
Given assumption iii) in the begining of this section, on the entire character of the right derivative, $\big( W^{\mathsf L} (z) \big)^{\prime} \big( W^{\mathsf L} (z) \big)^{-1}$, and of the left derivative, $\big( W^{\mathsf R} (z) \big)^{-1}\big( W^{\mathsf R}(z)\big)^{\prime}$, and since $ \big( S_n^{\mathsf L} (z) \big)^{\prime} \big( S_n^{\mathsf L} (z) \big)^{-1} $ 
have only negative powers of $z $ in its Laurent expansion, and given that the structure matrix $M^{\mathsf L}(z)$ has entire coefficients, the asymptotic expansion of $M_n^{\mathsf L} (z)$ about $\infty$ must be a power expansion. 

A similar approach holds for the right context, and we can determine $M_n^{\mathsf R}(z)$ in terms of $S^{\mathsf R}_n (z)$ and $ W(z)$. Indeed, from 
\begin{align*}
{Z}^{\mathsf R}_n (z)
&= \begin{bmatrix} W^{\mathsf R} (z) z^n& 0_N \\ 0_N & ( W^{\mathsf L} (z))^{-1} z^{-n}\end{bmatrix}
S^{\mathsf R}_n (z) , 
\end{align*}
we get
\begin{multline*}
{M}^{\mathsf R}_n (z) = \big({S}^{\mathsf R}_n (z)\big)^{-1} \big(S_n^{\mathsf R}(z)\big)^{\prime} 
 \\ +
\big({S}^{\mathsf R}_n (z)\big)^{-1} 
\begin{bmatrix} \big( W^{\mathsf R} (z) \big)^{-1} \big( W^{\mathsf R} (z) \big)^{\prime} +
{n}{z}^{-1} & 0_N \\ 
0_N& - \big( W^{\mathsf L}(z) \big)^{\prime} \big( W^{\mathsf L} (z) \big)^{-1} -
{n}{z}^{-1} 
\end{bmatrix} 
{S}^{\mathsf R}_n (z),
\end{multline*}
and reasoning as for the left case we derive the desired result.
\end{proof}

Notice that given the matrices of entire functions $h^\mathsf L(z)$ and $h^\mathsf R(z)$ the structure matrices, using~\eqref{eq:MS}, can explicitly determined
in terms of the coefficients in $S_n^\mathsf L(z)$ and $S_n^\mathsf R(z)$. 
Moreover, when $h^\mathsf L(z),h^\mathsf R(z)\in\C^{N\times N}[z]$ are matrix polynomials, only the first elements, as much as the degree of the corresponding polynomial, in the asymptotic expansions of~$S_n^\mathsf L(z)$ and $S_n^\mathsf R(z)$ are involved, and we will have that $M^{\mathsf L}_n (z),M^{\mathsf R}_n (z) \in \C^{2N\times 2N}[z]$
are also polynomials with degree $\deg M^{\mathsf L}_n (z),\deg M^{\mathsf L}_n (z) =\max(h^{\mathsf L}_n (z) ,h^{\mathsf R}_n (z) )$.

\begin{rem}
 For the reductions we have 
 \begin{align*}
 M_n^\mathsf R(z)
 &=\big( M_n^\mathsf L(z) \big)^\top, & \text{symmetric},\\
 M_n^\mathsf R(z)
 &=\big( M_n^\mathsf L(\bar z) \big)^\dagger, & \text{Hermitian}.
 \end{align*}
 In both cases, we will use the notation
 \begin{align*}
 M_n(z):=M_n^\mathsf L(z).
 \end{align*}
\end{rem}

\section{Matrix Pearson equations and Differential equations}
\label{sec:4}

\subsection{Matrix Pearson equations }

As we have seen, the left and right logarithmic derivatives, $h^\mathsf L(z)=\big( W^{\mathsf L} (z) \big)^{\prime} \big( W^{\mathsf L} (z) \big)^{-1}$ and $h^\mathsf R(z)=\big( W^{\mathsf R} (z) \big)^{-1} \big( W^{\mathsf R}(z) \big)^{\prime} $, play an important role in the discussion of the structure matrices.
This motivates us to adopt the following strategy: 
assume that instead of a given matrix of weights we are provided with two matrices, say $h^{\mathsf L}(z)$ and $h^{\mathsf R}(z)$, of entire functions such that the following two matrix Pearson equations are satisfied
\begin{align}\label{eq:partial_Pearson_L}
\frac{\d W^\mathsf L}{\d z}&= h^{\mathsf L}(z) W^{\mathsf L}(z),
 \\
\frac{\d W^\mathsf R}{\d z}&= W^{\mathsf R}(z) h^{\mathsf R}(z) ;\label{eq:partial_Pearson_R}
\end{align}
and given solutions to them we construct the corresponding matrix of weights $ W= W^\mathsf L W^\mathsf R$. Moreover, this matrix of weights is also characterized by a Pearson equation.
\begin{pro}[Pearson Sylvester differential equation] \label{prop:Pearson}
Given two matrices of entire functions $h^{\mathsf L}(z)$ and $h^{\mathsf R}(z)$, any solution of the Sylvester type matrix differential equation, which we call Pearson equation for the weight,
\begin{align}\label{eq:Pearson}
\frac{\d W}{\d z}=h^{\mathsf L} (z) W(z)+ W(z) h^{\mathsf R} (z)
\end{align} 
is of the form $ W= W^\mathsf L W^\mathsf R$ where the factor matrices $ W^\mathsf L$ and $ W^\mathsf R$ are solutions of~\eqref{eq:partial_Pearson_L} and~\eqref{eq:partial_Pearson_R}, respectively. 
\end{pro}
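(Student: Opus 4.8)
The plan is to show that any solution $W$ of the Sylvester equation~\eqref{eq:Pearson} factors as $W = W^{\mathsf L}W^{\mathsf R}$ with the factors solving~\eqref{eq:partial_Pearson_L} and~\eqref{eq:partial_Pearson_R}. First I would fix one particular solution $W^{\mathsf L}$ of the left Pearson equation $\frac{\d W^{\mathsf L}}{\d z}=h^{\mathsf L}(z)W^{\mathsf L}(z)$; since $h^{\mathsf L}$ is entire, the standard existence theorem for linear matrix ODEs provides an entire fundamental solution, and choosing the initial condition $W^{\mathsf L}(z_0)=I$ at some basepoint makes $W^{\mathsf L}(z)$ invertible everywhere (its determinant satisfies a scalar linear ODE and never vanishes). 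Then define $W^{\mathsf R}(z):=\big(W^{\mathsf L}(z)\big)^{-1}W(z)$, which is automatically entire wherever $W$ is, and is the natural candidate for the right factor.

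The core computation is then to differentiate $W^{\mathsf R}$ and check it satisfies~\eqref{eq:partial_Pearson_R}. Using the product rule together with $\big((W^{\mathsf L})^{-1}\big)^{\prime}=-(W^{\mathsf L})^{-1}(W^{\mathsf L})^{\prime}(W^{\mathsf L})^{-1}=-(W^{\mathsf L})^{-1}h^{\mathsf L}$, I would compute
\begin{align*}
\frac{\d W^{\mathsf R}}{\d z}
&= -\big(W^{\mathsf L}\big)^{-1}h^{\mathsf L}W + \big(W^{\mathsf L}\big)^{-1}\frac{\d W}{\d z}
= -\big(W^{\mathsf L}\big)^{-1}h^{\mathsf L}W + \big(W^{\mathsf L}\big)^{-1}\big(h^{\mathsf L}W + Wh^{\mathsf R}\big),
\end{align*}
and the two terms containing $h^{\mathsf L}$ cancel, leaving $\frac{\d W^{\mathsf R}}{\d z}=\big(W^{\mathsf L}\big)^{-1}Wh^{\mathsf R}=W^{\mathsf R}h^{\mathsf R}$, which is exactly~\eqref{eq:partial_Pearson_R}. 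By construction $W^{\mathsf L}W^{\mathsf R}=W$, so the factorization holds.

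The only genuine subtlety — and where I expect the main obstacle to lie — is the regularity and global-definedness of $W^{\mathsf R}$: one must be sure $W^{\mathsf L}$ can be taken to be a matrix of entire functions that is globally invertible, so that $W^{\mathsf R}=(W^{\mathsf L})^{-1}W$ inherits entireness rather than merely meromorphy. This is exactly guaranteed because $h^{\mathsf L}$ is entire (so the fundamental solution extends to all of $\C$) and the Wronskian-type identity $\det W^{\mathsf L}(z)=\exp\!\big(\int_{z_0}^{z}\operatorname{tr}h^{\mathsf L}(\zeta)\,\d\zeta\big)$ is a nowhere-vanishing entire function. I would remark that the factorization is not unique, consistent with the orbit of equivalent factorizations $(W^{\mathsf L}\phi,\phi^{-1}W^{\mathsf R})$ described earlier in \S~\ref{sec:3}: any two choices of left fundamental solution differ by right multiplication by a nonsingular matrix of entire functions $\phi$, which is precisely the freedom noted there. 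The remaining verification that $W^{\mathsf L},W^{\mathsf R}$ thus built are of the type assumed in assumptions i)--iii) is then immediate.
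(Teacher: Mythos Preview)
Your proof is correct and follows essentially the same route as the paper's: pick a solution $W^{\mathsf L}$ of~\eqref{eq:partial_Pearson_L}, set $W^{\mathsf R}:=(W^{\mathsf L})^{-1}W$, and verify by direct differentiation that $W^{\mathsf R}$ solves~\eqref{eq:partial_Pearson_R}. The paper dispatches this in one sentence (``it is easy to see''), whereas you supply the explicit cancellation and the Wronskian-type justification for the global invertibility of $W^{\mathsf L}$, which is a welcome addition but not a different argument.
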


\begin{proof}
 Given solutions $ W^\mathsf L$ and $ W^\mathsf R$ of~\eqref{eq:partial_Pearson_L} and~\eqref{eq:partial_Pearson_R}, respectively, it follows intermediately, just using the Leibniz law for derivatives, that $ W= W^\mathsf L W^\mathsf R$ fulfills~\eqref{eq:Pearson}. Moreover, given a solution $ W$ of~\eqref{eq:Pearson} we pick a solution $ W^\mathsf L$ of~\eqref{eq:partial_Pearson_L}, then it is easy to see that $( W^\mathsf L)^{-1} W$ satisfies~\eqref{eq:partial_Pearson_R}.
\end{proof}

\begin{rem}
The matrix of weights $ W$ does not uniquely determine the left and right factors; indeed if $ W= W^\mathsf L W^\mathsf R$, with factors solving~\eqref{eq:partial_Pearson_L} and~\eqref{eq:partial_Pearson_R}, respectively, then
$\widetilde W^\mathsf L = W^\mathsf L C$ and $\widetilde W^\mathsf R=C^{-1} W^\mathsf R$ for~$C$ a nonsingular matrix, gives also another possible factorization $ W=\widetilde W^\mathsf L\widetilde W^\mathsf R$, with factors solving the partial Pearson equations~\eqref{eq:partial_Pearson_L} and~\eqref{eq:partial_Pearson_R}. This indeterminacy disappears when one considers the right and left derivatives of the factors. 
\end{rem}

\begin{rem}
 Given two matrices of entire functions $h^{\mathsf L}(z)$ and $h^{\mathsf R}(z)$ and a matrix of weights~$ W$ characterized by the matrix Pearson equation~\eqref{eq:Pearson} we have the left and right fundamental matrices $Y_n^\mathsf L(z)$ and $Y^\mathsf R_n(z)$ satisfying corresponding Riemann--Hilbert problems. The 
associated structure matrices are from~\eqref{eq:MS} and~\eqref{eq:MSR} given by,
\begin{align} \label{eq:MSP}
M^{\mathsf L}_n (z) &= \Bigg(S^{\mathsf L}_n (z)
\begin{bmatrix} 
h^\mathsf L(z)& 0_N \\ 
0_N& -h^\mathsf R(z)
\end{bmatrix} 
\big(S^{\mathsf L}_n(z)\big)^{-1} \Bigg)_{\pmb +}, 
 \\ 
\label{eq:MSPR}
{M}^{\mathsf R}_n (z)& = \Bigg(
\big({S}^{\mathsf R}_n (z)\big)^{-1} 
\begin{bmatrix}h^\mathsf R(z) & 0_N \\ 
0_N& - h^\mathsf L(z)
\end{bmatrix} 
{S}^{\mathsf R}_n (z) \Bigg)_{\pmb +}.
\end{align}
\end{rem}

\begin{rem}
 For the symmetric and Hermitian reductions, we have
 \begin{align*}
 h^\mathsf R(z)&=\big( h^\mathsf L(z)\big)^\top,&\text{symmetric}, \\
h^\mathsf R(z)&=\big( h^\mathsf L(\bar z)\big)^\dagger,&\text{Hermitian},
 \end{align*}
 and 
 \eqref{eq:partial_Pearson_L} and \eqref{eq:partial_Pearson_R} collapses into a single equation 
  \begin{align*}
 \frac{\d \rho}{\d z}&= h(z) \rho(z),
 \end{align*}
where 
$h(z):=h^\mathsf L(z)$,
 and the Pearson equation \eqref{eq:Pearson} reads
 \begin{align}\label{eq:Pearson:_reduced}
\begin{aligned}
 \frac{\d W}{\d z}&=h(z) W(z)+ W(z) (h(z))^\top,& \quad \text{symmetric}, \\
 \frac{\d W}{\d z}&=h(z) W(z)+ W(z) (h(\bar z))^\dagger,& \quad  \text{Hermitian}.
\end{aligned}
 \end{align} 
\end{rem}

\subsection{Sylvester differential equations for the fundamental matrices}

Following the standard use in Soliton Theory, given a matrix of holomorphic functions $A(z)$ we define its Miura transform by 
\begin{align*}
 \mathcal M(A)=A^{\prime}(z)+(A(z))^2.
\end{align*}
Observe that when $A$ is a right (left) logarithmic
derivative $A=w^{\prime}w^{-1}$ ($A=w^{-1}w^{\prime}$) we have $\mathcal M(A)=w^{\prime\prime} w^{-1}$ ($\mathcal M(A)=w^{-1} w^{\prime\prime}$).
 \begin{pro}[Sylvester differential linear systems] 
In the conditions of Proposition~\ref{prop:Pearson}, the left fundamental matrix $Y^\mathsf L_n (z)$ and the right fundamental matrix $Y^\mathsf R_n (z)$ satisfy, for each $n \in \mathbb N$, the following Sylvester matrix differential equations,
\begin{align} \label{eq:estrutura1}
\big(Y^\mathsf L_n (z)\big)^{\prime} 
= M^\mathsf L_n (z) Y^\mathsf L_n (z) - Y^\mathsf L_n (z) 
\begin{bmatrix} 
h^{\mathsf L}(z) & 0_N\\ 
0_N & -h^{\mathsf R} (z) 
\end{bmatrix},\\
 \label{eq:estrutura2}
\big(Y^\mathsf R_n (z)\big)^{\prime} 
= Y^\mathsf R_n (z) M^\mathsf R_n (z) - \begin{bmatrix} 
h^{\mathsf R}(z) & 0_N\\ 
0_N & -h^{\mathsf L} (z) 
\end{bmatrix}Y^\mathsf R_n (z) ,
\end{align}
respectively.
\end{pro}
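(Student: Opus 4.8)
The plan is to derive the two Sylvester differential systems directly from the definition of the structure matrices and the constant jump fundamental matrices $Z_n^{\mathsf L}(z)$ and $Z_n^{\mathsf R}(z)$, exploiting that all jumps involved are \emph{constant} (independent of $z$). First I would recall from Proposition~\ref{prop:Mn} and its proof that $M^{\mathsf L}_n(z) = \big(Z^{\mathsf L}_n(z)\big)^{\prime}\big(Z^{\mathsf L}_n(z)\big)^{-1}$ is a matrix of entire functions, and that by \eqref{eq:zn} one has the factorization
\begin{align*}
Z^{\mathsf L}_n(z) = Y^{\mathsf L}_n(z)
\begin{bmatrix}
W^{\mathsf L}(z) & 0_N \\
0_N & \big(W^{\mathsf R}(z)\big)^{-1}
\end{bmatrix}.
\end{align*}
Differentiating this identity and using the Pearson equations \eqref{eq:partial_Pearson_L} and \eqref{eq:partial_Pearson_R}, which give $\big(W^{\mathsf L}\big)^{\prime} = h^{\mathsf L} W^{\mathsf L}$ and $\big(W^{\mathsf R}\big)^{\prime} = W^{\mathsf R} h^{\mathsf R}$, hence $\big((W^{\mathsf R})^{-1}\big)^{\prime} = -h^{\mathsf R}(W^{\mathsf R})^{-1}$, I would isolate $\big(Y^{\mathsf L}_n\big)^{\prime}$.

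Concretely, the key algebraic step is: from $Y^{\mathsf L}_n = Z^{\mathsf L}_n D^{-1}$, where $D(z) = \operatorname{diag}\!\big(W^{\mathsf L}(z), (W^{\mathsf R}(z))^{-1}\big)$, the product rule yields
\begin{align*}
\big(Y^{\mathsf L}_n\big)^{\prime}
= \big(Z^{\mathsf L}_n\big)^{\prime} D^{-1} + Z^{\mathsf L}_n \big(D^{-1}\big)^{\prime}
= M^{\mathsf L}_n Z^{\mathsf L}_n D^{-1} - Z^{\mathsf L}_n D^{-1} D^{\prime} D^{-1}
= M^{\mathsf L}_n Y^{\mathsf L}_n - Y^{\mathsf L}_n\, D^{\prime} D^{-1},
\end{align*}
and a direct computation gives
\begin{align*}
D^{\prime}(z) D(z)^{-1} =
\begin{bmatrix}
h^{\mathsf L}(z) & 0_N \\
0_N & -h^{\mathsf R}(z)
\end{bmatrix},
\end{align*}
which is exactly \eqref{eq:estrutura1}. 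For the right identity \eqref{eq:estrutura2} I would argue symmetrically starting from \eqref{eq:zetan}, writing $Y^{\mathsf R}_n = \widetilde D^{-1} Z^{\mathsf R}_n$ with $\widetilde D(z) = \operatorname{diag}\!\big(W^{\mathsf R}(z), (W^{\mathsf L}(z))^{-1}\big)$ and using $M^{\mathsf R}_n = \big(Z^{\mathsf R}_n\big)^{-1}\big(Z^{\mathsf R}_n\big)^{\prime}$; alternatively one may simply invoke the already established relation \eqref{eq:znright} between $Z^{\mathsf R}_n$ and $(Z^{\mathsf L}_n)^{-1}$ together with the analogous relation between $M^{\mathsf R}_n$ and $M^{\mathsf L}_n$, and transport \eqref{eq:estrutura1} through conjugation by $\left[\begin{smallmatrix} 0 & -I \\ I & 0\end{smallmatrix}\right]$.

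There is no serious obstacle here: the only point that needs a word of justification is that $M^{\mathsf L}_n(z)$ and $M^{\mathsf R}_n(z)$ are genuinely entire (so that \eqref{eq:estrutura1}--\eqref{eq:estrutura2} are identities of entire matrix functions rather than merely identities off $\gamma$), but this was proved in Proposition~\ref{prop:Mn}~i) using the constant jump condition of Proposition~\ref{prop:zn}. The mild bookkeeping to watch is the sign and the left/right placement of $h^{\mathsf R}$ coming from differentiating the inverse $(W^{\mathsf R})^{-1}$, and the corresponding transpositions in the right-hand version; these are purely mechanical.
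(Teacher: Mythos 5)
Your proposal is correct and follows essentially the same route as the paper: the paper's proof simply observes that $M^{\mathsf L}_n=(Z^{\mathsf L}_n)^{\prime}(Z^{\mathsf L}_n)^{-1}$ together with the definition \eqref{eq:zn} gives \eqref{eq:estrutura1}, and you have merely written out the product-rule computation and the evaluation of $D^{\prime}D^{-1}$ explicitly, with the right case handled analogously. All the details (including the sign from differentiating $(W^{\mathsf R})^{-1}$ and the appeal to Proposition~\ref{prop:Mn}~i) for entirety of $M^{\mathsf L}_n$, $M^{\mathsf R}_n$) check out.
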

\begin{proof}
As
$M^\mathsf L_n (z) =
\big(Z^\mathsf L_n(z)\big)^{\prime} \big(Z^\mathsf L_n(z)\big)^{-1}$ 
is the right derivative of the constant jump structure matrix from~\eqref{eq:zn}
we get~\eqref{eq:estrutura1};~\eqref{eq:estrutura2} is proven analogously.
\end{proof}

We write
\begin{align*}
 M^\mathsf L_n (z) &= 
\begin{bmatrix} M_{1,1,n}^\mathsf L (z) & M_{1,2,n}^\mathsf L(z) \\[.05cm]
M_{2,1,n}^\mathsf L (z) & M_{2,2,n}^\mathsf L (z) \end{bmatrix}, 
 &
M^\mathsf R_n (z) &= 
\begin{bmatrix} M_{1,1,n}^\mathsf R (z) & M_{1,2,n}^\mathsf R(z) \\[.05cm]
M_{2,1,n}^\mathsf R (z) & M_{2,2,n}^\mathsf R (z) 
\end{bmatrix}, 
\end{align*}
to express the previous results in the following manner.
\begin{coro}
The Sylvester matrix differential equations~\eqref{eq:estrutura1} and~\eqref{eq:estrutura2} split in the following Sylvester differential systems
\begin{align}
\label{eq:structure_PL}
\begin{cases}
\big(P^\mathsf L_n (z) \big)^{\prime}+ P^\mathsf L_n (z) h^\mathsf L(z) 
= M_{1,1,n}^\mathsf L (z) P^\mathsf L_n (z) - M_{1,2,n}^\mathsf L (z) C_{n-1} P^\mathsf L_{n-1} (z) , \\
\big(P^\mathsf L_{n-1} (z) \big)^{\prime} + P^\mathsf L_{n-1} (z) h^\mathsf L(z) 
= - C_{n-1}^{-1} M_{2,1,n}^\mathsf L (z) P^\mathsf L_n (z) + C_{n-1}^{-1} M_{2,2,n}^\mathsf L (z) C_{n-1} P^\mathsf L_{n-1} (z) , 
\end{cases} 
 \\
\label{eq:structure_PR}
\begin{cases}
\big(Q^\mathsf L_n (z) \big)^{\prime}+Q^\mathsf L_n (z) h^\mathsf R(z) 
 = M_{1,1,n}^\mathsf L Q^\mathsf L_n (z) - M_{1,2,n}^\mathsf L(z) C_{n-1} Q^\mathsf L_{n-1} (z) , \\
\big(Q^\mathsf L_{n-1} (z) \big)^{\prime}+Q^\mathsf L_{n-1} (z) h^\mathsf R(z) 
 = - C_{n-1}^{-1} M_{2,1,n}^\mathsf L (z) Q^\mathsf L_n (z) + C_{n-1}^{-1} M_{2,2,n}^\mathsf L (z) C_{n-1} Q^\mathsf L_{n-1} (z) , 
\end{cases} 
 \\
\notag
\begin{cases}
\big(P^\mathsf R_n (z) \big)^{\prime}+ h^\mathsf R(z)P^\mathsf R_n (z) 
 = P^\mathsf R_n (z) M_{1,1,n}^\mathsf R (z) - P^\mathsf R_{n-1} (z) C_{n-1} M_{2,1,n}^\mathsf R(z) , \\
\big(P^\mathsf R_{n-1} (z) \big)^{\prime} + h^\mathsf R(z)P^\mathsf R_{n-1} (z) 
 = -P^\mathsf R_n (z) M_{1,2,n}^\mathsf R (z) C_{n-1}^{-1} + P^\mathsf R_{n-1} (z) C_{n-1} M_{2,2,n}^\mathsf R (z) C_{n-1}^{-1} , 
\end{cases} 
 \\
\notag
\begin{cases}
\big(Q^\mathsf R_n (z) \big)^{\prime} + h^\mathsf L(z)Q^\mathsf R_n (z) 
 = Q^\mathsf R_n (z) M_{1,1,n}^\mathsf R (z) - Q^\mathsf R_{n-1} (z) C_{n-1} M_{2,1,n}^\mathsf R(z) , \\
\big(Q^\mathsf R_{n-1} (z) \big)^{\prime} + h^\mathsf L(z)Q^\mathsf R_{n-1} (z) 
 = -Q^\mathsf R_n (z) M_{1,2,n}^\mathsf R (z) C_{n-1}^{-1} + Q^\mathsf R_{n-1} (z) C_{n-1} M_{2,2,n}^\mathsf R (z) C_{n-1}^{-1} , 
\end{cases}
\end{align}
\end{coro}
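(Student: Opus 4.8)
The plan is to substitute the explicit $2N\times 2N$ block forms of the fundamental matrices into the Sylvester equations \eqref{eq:estrutura1} and \eqref{eq:estrutura2}, write the structure matrices in the partitioned form $M_n^{\mathsf L}=\big[M_{i,j,n}^{\mathsf L}\big]_{i,j=1,2}$ and $M_n^{\mathsf R}=\big[M_{i,j,n}^{\mathsf R}\big]_{i,j=1,2}$, and then read off the four stated systems by equating the corresponding $N\times N$ blocks on each side. Since the matrices $C_{n-1}$ are constant in $z$, differentiation commutes with left/right multiplication by them, and every step reduces to block-matrix algebra.

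For the left case I would insert
\[
Y_n^{\mathsf L}(z)=\begin{bmatrix} P_n^{\mathsf L}(z) & Q_n^{\mathsf L}(z)\\[.05cm] -C_{n-1}P_{n-1}^{\mathsf L}(z) & -C_{n-1}Q_{n-1}^{\mathsf L}(z)\end{bmatrix}
\]
into \eqref{eq:estrutura1}, differentiate entrywise, and expand $M_n^{\mathsf L}(z)Y_n^{\mathsf L}(z)-Y_n^{\mathsf L}(z)\operatorname{diag}\big(h^{\mathsf L}(z),-h^{\mathsf R}(z)\big)$ block by block. The first column carries the factor $h^{\mathsf L}$: its $(1,1)$ block yields $\big(P_n^{\mathsf L}\big)'=M_{1,1,n}^{\mathsf L}P_n^{\mathsf L}-M_{1,2,n}^{\mathsf L}C_{n-1}P_{n-1}^{\mathsf L}-P_n^{\mathsf L}h^{\mathsf L}$, which rearranges to the first line of \eqref{eq:structure_PL}; its $(2,1)$ block yields $-C_{n-1}\big(P_{n-1}^{\mathsf L}\big)'=M_{2,1,n}^{\mathsf L}P_n^{\mathsf L}-M_{2,2,n}^{\mathsf L}C_{n-1}P_{n-1}^{\mathsf L}+C_{n-1}P_{n-1}^{\mathsf L}h^{\mathsf L}$, and left multiplication by $-C_{n-1}^{-1}$ produces the second line of \eqref{eq:structure_PL}. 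The second column carries $-h^{\mathsf R}$, so the same two block computations with $Q_n^{\mathsf L}$, $Q_{n-1}^{\mathsf L}$ in place of $P_n^{\mathsf L}$, $P_{n-1}^{\mathsf L}$ (the extra minus sign in the second column turning $-Q_n^{\mathsf L}h^{\mathsf R}$ into $+Q_n^{\mathsf L}h^{\mathsf R}$ after transposition to the left-hand side) give system \eqref{eq:structure_PR}.

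For the right case one proceeds symmetrically: substitute
\[
Y_n^{\mathsf R}(z)=\begin{bmatrix} P_n^{\mathsf R}(z) & -P_{n-1}^{\mathsf R}(z)C_{n-1}\\[.05cm] Q_n^{\mathsf R}(z) & -Q_{n-1}^{\mathsf R}(z)C_{n-1}\end{bmatrix}
\]
into \eqref{eq:estrutura2} and equate blocks of $Y_n^{\mathsf R}(z)M_n^{\mathsf R}(z)-\operatorname{diag}\big(h^{\mathsf R}(z),-h^{\mathsf L}(z)\big)Y_n^{\mathsf R}(z)$ with the entrywise derivative of $Y_n^{\mathsf R}(z)$. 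The first row carries $h^{\mathsf R}$ on the left: the $(1,1)$ block gives the first line of the $P^{\mathsf R}$ system directly, and the $(1,2)$ block, after right multiplication by $-C_{n-1}^{-1}$, gives the second line. The second row carries $-h^{\mathsf L}$ and yields in the same fashion the $Q_n^{\mathsf R}$, $Q_{n-1}^{\mathsf R}$ system. There is no conceptual difficulty here — the corollary is exactly the block-matrix transcription of the two Sylvester equations of the preceding proposition — and the only point requiring care, hence the main bookkeeping obstacle, is tracking the conjugating factors $C_{n-1}$, $C_{n-1}^{-1}$ together with the two sources of signs (the $-C_{n-1}$ in the lower row of $Y_n^{\mathsf L}$, respectively the right column of $Y_n^{\mathsf R}$, and the $-h^{\mathsf R}$, respectively $-h^{\mathsf L}$, in the diagonal factor) so that all four displayed systems come out with precisely the signs shown.
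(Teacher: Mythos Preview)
Your approach is correct and is exactly what the paper has in mind: the corollary is stated without proof precisely because it is the immediate block-by-block transcription of \eqref{eq:estrutura1} and \eqref{eq:estrutura2} once one writes $Y_n^{\mathsf L}$, $Y_n^{\mathsf R}$ and $M_n^{\mathsf L}$, $M_n^{\mathsf R}$ in $2\times 2$ block form. Your outline of which block gives which line, and of the role of the constant factors $C_{n-1}$, is accurate; the only thing to watch (as you note) is the sign bookkeeping in the second column of $Y_n^{\mathsf L}$ and the second row of $Y_n^{\mathsf R}$, where the $-h^{\mathsf R}$ (resp.\ $-h^{\mathsf L}$) entry of the diagonal factor produces $(Q_n^{\mathsf L})'-Q_n^{\mathsf L}h^{\mathsf R}$ on the left (cf.\ the Hermite specialization \eqref{eq:Hermite4} later in the paper).
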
 

We first observe from the linear differential systems~\eqref{eq:structure_PL} and~\eqref{eq:structure_PR} satisfied by the left and right matrix orthogonal polynomials, respectively, we will be able to extract in some scenarios, see next section on applications, a matrix eigenvalue problem for a second order matrix differential operator, with matrix eigenvalues. The differential systems~\eqref{eq:structure_PL} and~\eqref{eq:structure_PR} for the left and right second kind functions also provide interesting information, and we will use them discover nonlinear equations satisfied by the recursion coefficients.

\begin{rem}
 For the reductions we have
 \begin{align*} 
 \big(Y_n (z)\big)^{\prime} 
& = M_n (z) Y_n (z) - Y_n (z) 
 \begin{bmatrix} 
 h(z) & 0_N\\ 
 0_N & -(h (z) )^\top
 \end{bmatrix}, &\text{symmetric},\\
 \big(Y_n (z)\big)^{\prime} 
 &= M_n (z) Y_n (z) - Y_n (z) 
 \begin{bmatrix} 
 h(z) & 0_N\\ 
 0_N & -(h (\bar z) )^\dagger.
 \end{bmatrix}, & \text{Hermitian}.
 \end{align*}
\end{rem}

\section{Second order differential operators} \label{sec:5}

We firstly derive, as a consequence of the Sylvester differential linear systems, second order differential equations fulfilled by the fundamental matrices, and therefore by the matrix biorthogonal polynomials and also by the corresponding second kind functions.

\begin{pro}[Second order linear differential equations]\label{teo:second_order_diff}
In the conditions of Proposition~\ref{prop:Pearson}, the sequence of fundamental matrices, $\big\{ Y_n^{\mathsf L} \big\}_{n \in \mathbb N}$ and $\big\{ Y_n^{\mathsf R} \big\}_{n \in \mathbb N}$, satisfy 
 \begin{align}\label{eq:edo1}
\begin{multlined}[t][0.9\textwidth]
 \big(Y^\mathsf L_n (z)\big)^{\prime\prime}+2 \big(Y^\mathsf L_n (z)\big)^{\prime}\begin{bmatrix} 
 h^{\mathsf L}(z) & 0_N \\ 
 0_N & -h^{\mathsf R} (z) 
 \end{bmatrix}+ Y_n^\mathsf L (z)
 \begin{bmatrix} 
 \mathcal M\big(h^{\mathsf L}(z)\big)& 0_N\\ 
 0_N &\mathcal M\big(-h^{\mathsf R}(z)\big)
 \end{bmatrix}
 \\ 
=\mathcal M\big(M^\mathsf L_n (z) \big)Y^\mathsf L_n (z) ,
\end{multlined}
 \\
 \label{eq:edo2}
\begin{multlined}[t][0.9\textwidth]
 \big(Y^\mathsf R_n (z)\big)^{\prime\prime}+2\begin{bmatrix} 
 h^{\mathsf R}(z) & 0_N\\ 
 0_N & -h^{\mathsf L} (z) 
 \end{bmatrix} \big(Y^\mathsf R_n (z)\big)^{\prime}+ 
 \begin{bmatrix} 
 \mathcal M\big(h^{\mathsf R}(z)\big)& 0_N\\ 
 0_N &\mathcal M\big(-h^{\mathsf L}(z)\big)
 \end{bmatrix}
 Y_n^\mathsf L (z)
 \\ = 
Y^\mathsf R_n (z) \mathcal M\big(M^\mathsf R_n (z) \big).
\end{multlined}
 \end{align}
\end{pro}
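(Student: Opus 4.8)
The plan is to obtain the two second order equations simply by differentiating once more the first order Sylvester systems \eqref{eq:estrutura1} and \eqref{eq:estrutura2}, and then reabsorbing the emerging $\big(M^\mathsf L_n\big)'$, $\big(M^\mathsf R_n\big)'$ and the derivatives of $h^\mathsf L,h^\mathsf R$ into Miura transforms by invoking those same first order relations a second time. It is convenient to abbreviate $\Theta^\mathsf L(z):=\begin{bmatrix}h^{\mathsf L}(z)&0_N\\0_N&-h^{\mathsf R}(z)\end{bmatrix}$ and $\Theta^\mathsf R(z):=\begin{bmatrix}h^{\mathsf R}(z)&0_N\\0_N&-h^{\mathsf L}(z)\end{bmatrix}$; since these matrices are block-diagonal one has $\mathcal M(\Theta^\mathsf L)=\begin{bmatrix}\mathcal M(h^{\mathsf L})&0_N\\0_N&\mathcal M(-h^{\mathsf R})\end{bmatrix}$ and the analogous identity for $\Theta^\mathsf R$, so it suffices to prove $\big(Y^\mathsf L_n\big)''+2\big(Y^\mathsf L_n\big)'\Theta^\mathsf L+Y^\mathsf L_n\,\mathcal M(\Theta^\mathsf L)=\mathcal M\big(M^\mathsf L_n\big)Y^\mathsf L_n$ and $\big(Y^\mathsf R_n\big)''+2\,\Theta^\mathsf R\big(Y^\mathsf R_n\big)'+\mathcal M(\Theta^\mathsf R)\,Y^\mathsf R_n=Y^\mathsf R_n\,\mathcal M\big(M^\mathsf R_n\big)$.

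For the left identity I would write \eqref{eq:estrutura1} as $\big(Y^\mathsf L_n\big)'=M^\mathsf L_n Y^\mathsf L_n-Y^\mathsf L_n\Theta^\mathsf L$ and differentiate, getting $\big(Y^\mathsf L_n\big)''=\big(M^\mathsf L_n\big)'Y^\mathsf L_n+M^\mathsf L_n\big(Y^\mathsf L_n\big)'-\big(Y^\mathsf L_n\big)'\Theta^\mathsf L-Y^\mathsf L_n\big(\Theta^\mathsf L\big)'$. Into the term $M^\mathsf L_n\big(Y^\mathsf L_n\big)'$ I substitute \eqref{eq:estrutura1} once more, turning it into $\big(M^\mathsf L_n\big)^2Y^\mathsf L_n-M^\mathsf L_n Y^\mathsf L_n\Theta^\mathsf L$; the first summand joins $\big(M^\mathsf L_n\big)'Y^\mathsf L_n$ to give $\mathcal M\big(M^\mathsf L_n\big)Y^\mathsf L_n$, while in the second summand I replace $M^\mathsf L_n Y^\mathsf L_n$ by $\big(Y^\mathsf L_n\big)'+Y^\mathsf L_n\Theta^\mathsf L$ (again \eqref{eq:estrutura1}) to obtain $-\big(Y^\mathsf L_n\big)'\Theta^\mathsf L-Y^\mathsf L_n\big(\Theta^\mathsf L\big)^2$. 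Collecting the contributions leaves $\big(Y^\mathsf L_n\big)''=\mathcal M\big(M^\mathsf L_n\big)Y^\mathsf L_n-2\big(Y^\mathsf L_n\big)'\Theta^\mathsf L-Y^\mathsf L_n\big(\big(\Theta^\mathsf L\big)'+\big(\Theta^\mathsf L\big)^2\big)$, which is exactly \eqref{eq:edo1} since $\big(\Theta^\mathsf L\big)'+\big(\Theta^\mathsf L\big)^2=\mathcal M(\Theta^\mathsf L)$.

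The right identity is handled by the mirror computation: start from \eqref{eq:estrutura2} in the form $\big(Y^\mathsf R_n\big)'=Y^\mathsf R_n M^\mathsf R_n-\Theta^\mathsf R Y^\mathsf R_n$, differentiate, substitute \eqref{eq:estrutura2} into $\big(Y^\mathsf R_n\big)'M^\mathsf R_n$ and then, inside the term it produces, into $\Theta^\mathsf R Y^\mathsf R_n M^\mathsf R_n$, and reassemble using $Y^\mathsf R_n\big(M^\mathsf R_n\big)'+Y^\mathsf R_n\big(M^\mathsf R_n\big)^2=Y^\mathsf R_n\,\mathcal M\big(M^\mathsf R_n\big)$ and $\big(\Theta^\mathsf R\big)'+\big(\Theta^\mathsf R\big)^2=\mathcal M(\Theta^\mathsf R)$; this yields \eqref{eq:edo2} (reading $Y^\mathsf R_n$ for the apparent $Y^\mathsf L_n$ typos there). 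No genuine difficulty arises; the one thing that needs care is keeping the order of the non-commuting factors $M_n$, $Y_n$ and $\Theta$ correct at each of the two back-substitutions, since an inadvertent side-swap would break the identity. It is also worth noting that the resulting coefficient matrices $\Theta^\mathsf L$, $\mathcal M(\Theta^\mathsf L)$ and $\mathcal M\big(M^\mathsf L_n\big)$, and their right counterparts, are entire by part i) of Proposition~\ref{prop:Mn} together with the standing hypotheses on $h^\mathsf L$ and $h^\mathsf R$.
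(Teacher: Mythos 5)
Your proposal is correct and follows essentially the same route as the paper's own proof: differentiate the Sylvester system \eqref{eq:estrutura1}, back-substitute it into $M^{\mathsf L}_n\big(Y^{\mathsf L}_n\big)'$ and then once more into $M^{\mathsf L}_nY^{\mathsf L}_n$, and collect the Miura transforms (with the mirror computation for the right case). Your observation that the $Y_n^{\mathsf L}$ appearing in \eqref{eq:edo2} should read $Y_n^{\mathsf R}$ is also correct.
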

\begin{proof}
We prove~\eqref{eq:edo1}. First, let us take a derivative of~\eqref{eq:estrutura1} to get 
\begin{multline*}
\big(Y^\mathsf L_n (z)\big)^{\prime\prime}+\big(Y^\mathsf L_n (z) \big)^{\prime} 
 \begin{bmatrix} 
 h^{\mathsf L}(z) & 0_N\\ 
 0_N & -h^{\mathsf R} (z) 
 \end{bmatrix}+Y^\mathsf L_n (z) 
 \begin{bmatrix} 
 \big(h^{\mathsf L}(z)\big)^{\prime} & 0_N\\ 
 0_N & -\big(h^{\mathsf R} (z) \big)^{\prime} 
 \end{bmatrix}
 \\ =
\big(M^\mathsf L_n (z)\big)^{\prime} Y^\mathsf L_n (z) +M^\mathsf L_n (z) \big(Y^\mathsf L_n (z)\big)^{\prime} 
\end{multline*}
but again by~\eqref{eq:estrutura1}
\begin{align*}
 M^\mathsf L_n (z) \big(Y^\mathsf L_n (z)\big)^{\prime} 
&=\big(M^\mathsf L_n (z) \big)^2 Y^\mathsf L_n (z) - M^\mathsf L_n (z)Y^\mathsf L_n (z) 
\begin{bmatrix} 
 h^{\mathsf L}(z) & 0_N \\ 
 0_N & -h^{\mathsf R} (z) 
\end{bmatrix} 
\end{align*}
and if we substitute
\begin{align*}
M^\mathsf L_n (z)Y^\mathsf L_n (z)
&=\big(Y^\mathsf L_n (z)\big)^{\prime} 
 + Y^\mathsf L_n (z) 
 \begin{bmatrix} 
 h^{\mathsf L}(z) & 0_N \\ 
 0_N & -h^{\mathsf R} (z) 
 \end{bmatrix} 
\end{align*}
we finally get
\begin{align*}
M^\mathsf L_n (z) \big(Y^\mathsf L_n (z)\big)^{\prime} 
&=\big(M^\mathsf L_n (z) \big)^2 Y^\mathsf L_n (z) - \big(Y^\mathsf L_n (z)\big)^{\prime} \begin{bmatrix} 
h^{\mathsf L}(z) & 0_N \\ 
0_N & -h^{\mathsf R} (z) 
\end{bmatrix}
- Y^\mathsf L_n (z) 
\begin{bmatrix} 
 h^{\mathsf L}(z) & 0_N\\ 
 0_N & -h^{\mathsf R} (z) 
\end{bmatrix}^2,
 \end{align*}
and the result follows.
\end{proof}%
\begin{defi}
 For the next corollary we need to introduce the following $\C^{2N\times 2N}$ valued functions in terms of the difference of two Miura maps
\begin{align}
\label{eq:Miura_bispectrality_L}
 \mathsf H_n^{\mathsf L} (z) =
 \begin{bmatrix}
 \mathsf H_{1,1,n}^{\mathsf L} (z) & \mathsf H_{1,2,n}^{\mathsf L} (z)\\[.05cm] 
 \mathsf H_{2,1,n}^{\mathsf L} (z) & \mathsf H_{2,2,n}^{\mathsf L} (z)
 \end{bmatrix}
 &
 :
=\mathcal M(M_n^{\mathsf L} (z)) 
- \mathcal M 
\left( \begin{bmatrix}
h^{\mathsf L} (z) & 0_N \\ 0_N & -h^{\mathsf R} (z)
\end{bmatrix} \right) ,
 \\
\label{eq:Miura_bispectrality_R}
\mathsf H_n^{\mathsf R} (z) =\begin{bmatrix}
\mathsf H_{1,1,n}^{\mathsf R} (z) & \mathsf H_{1,2,n}^{\mathsf R} (z)\\[.05cm]
 \mathsf H_{2,1,n}^{\mathsf R} (z) & \mathsf H_{2,2,n}^{\mathsf R} (z)
\end{bmatrix}&=\mathcal M(M_n^{\mathsf R} (z)) 
-\mathcal M 
\left( \begin{bmatrix}
h^{\mathsf R} (z) & 0_N \\ 0_N & -h^{\mathsf L} (z)
\end{bmatrix} \right).
\end{align}
\end{defi}

\begin{coro}
The second order matrix differential equations~\eqref{eq:edo1} and~\eqref{eq:edo2} split in the following differential relations
\begin{align}
\label{eq:secondorderpl}
\begin{multlined}[t][0.85\textwidth]
\big(P_n^{\mathsf L} \big)^{\prime \prime}(z) + 2 \big( P_n^{\mathsf L} \big)^{ \prime}(z) h^{\mathsf L} (z)
+
P_n^{\mathsf L} (z) \mathcal M(h^{\mathsf L} (z))
 \\ = \big( {\mathcal M }(h^{\mathsf L} (z) ) + \mathsf H_{1,1,n}^{\mathsf L} (z) \big) P_n^{\mathsf L} (z) - \mathsf H_{1,2,n}^{\mathsf L} (z) P_{n-1}^{\mathsf L} (z) \, , 
\end{multlined}
 \\
\label{eq:secondorderql}
\begin{multlined}[t][0.85\textwidth]
\big(Q_n^{\mathsf L} \big)^{\prime \prime}(z) - 2\big(Q_n^{\mathsf L} \big)^{ \prime}(z) h^{\mathsf R} (z) 
+
Q_n^{\mathsf L} (z)\mathcal M(-h^{\mathsf R} (z)) 
 \\ = 
\big( {\mathcal M }(h^{\mathsf L} (z)) + \mathsf H_{1,1,n}^{\mathsf L} (z) \big) Q_n^{\mathsf L} (z) - \mathsf H_{1,2,n}^{\mathsf L} (z) Q_{n-1}^{\mathsf L} (z) \, ,
\end{multlined}
 \\
\label{eq:secondorderpr}
\begin{multlined}[t][0.85\textwidth]
 \big(P_n^{\mathsf R} \big)^{\prime \prime}(z) + 2 h^{\mathsf R} (z)\big(P_n^{\mathsf R} (z) \big)^{ \prime}(z) 
+
\mathcal M(h^{\mathsf R} (z)) P_n^{\mathsf R} (z)
 \\ = 
P_n^{\mathsf R} (z) \big( {\mathcal M }(h^{\mathsf R} (z) ) + \mathsf H_{1,1,n}^{\mathsf R} (z) \big)
- P_{n-1}^{\mathsf R} (z) \mathsf H_{2,1,n}^{\mathsf R} (z) \, ,
\end{multlined}
 \\
\label{eq:secondorderqr}
\begin{multlined}[t][0.85\textwidth]
 \big(Q_n^{\mathsf R} \big)^{\prime \prime}(z) - 2 h^{\mathsf L} (z)\big(Q_n^{\mathsf R} \big)^{ \prime}(z) 
+
\mathcal M(-h^{\mathsf L} (z)) Q_n^{\mathsf R} (z)
 \\ = 
Q_n^{\mathsf R} (z) \big( {\mathcal M }(h^{\mathsf R} (z) ) + \mathsf H_{1,1,n}^{\mathsf R} (z) \big)
- Q_{n-1}^{\mathsf R} (z) \mathsf H_{2,1,n}^{\mathsf R} (z) .
\end{multlined}
\end{align}
\end{coro}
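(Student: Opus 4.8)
The plan is to obtain the relations~\eqref{eq:secondorderpl}--\eqref{eq:secondorderqr} as the block components of the two second order matrix equations~\eqref{eq:edo1} and~\eqref{eq:edo2} of Proposition~\ref{teo:second_order_diff}, so the argument is essentially block bookkeeping. Two observations make the splitting transparent. First, the Miura map $\mathcal M(A)=A'+A^2$ preserves block-diagonal structure, whence
\begin{align*}
\mathcal M\left(\begin{bmatrix} h^{\mathsf L} & 0_N\\ 0_N & -h^{\mathsf R}\end{bmatrix}\right)=\begin{bmatrix}\mathcal M(h^{\mathsf L}) & 0_N\\ 0_N & \mathcal M(-h^{\mathsf R})\end{bmatrix},
\end{align*}
and similarly with $h^{\mathsf L}$ and $h^{\mathsf R}$ exchanged. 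Second, by the very definitions~\eqref{eq:Miura_bispectrality_L} and~\eqref{eq:Miura_bispectrality_R} one has $\mathcal M(M_n^{\mathsf L})=\mathsf H_n^{\mathsf L}+\mathcal M\big(\diag(h^{\mathsf L},-h^{\mathsf R})\big)$ and $\mathcal M(M_n^{\mathsf R})=\mathsf H_n^{\mathsf R}+\mathcal M\big(\diag(h^{\mathsf R},-h^{\mathsf L})\big)$. Hence the right-hand side of~\eqref{eq:edo1} equals $\mathsf H_n^{\mathsf L}Y_n^{\mathsf L}+\diag(\mathcal M(h^{\mathsf L}),\mathcal M(-h^{\mathsf R}))Y_n^{\mathsf L}$, and likewise for~\eqref{eq:edo2}.

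Next I would insert the block forms
\begin{align*}
Y_n^{\mathsf L}=\begin{bmatrix} P_n^{\mathsf L} & Q_n^{\mathsf L}\\ -C_{n-1}P_{n-1}^{\mathsf L} & -C_{n-1}Q_{n-1}^{\mathsf L}\end{bmatrix},\qquad Y_n^{\mathsf R}=\begin{bmatrix} P_n^{\mathsf R} & -P_{n-1}^{\mathsf R}C_{n-1}\\ Q_n^{\mathsf R} & -Q_{n-1}^{\mathsf R}C_{n-1}\end{bmatrix},
\end{align*}
and compare block entries. In~\eqref{eq:edo1} the matrix $Y_n^{\mathsf L}$ occurs only under left multiplication (by $\mathcal M(M_n^{\mathsf L})$ and by block-diagonal matrices) and under right multiplication by block-diagonal matrices, so the equation decouples block-column by block-column: the top block of the first column yields~\eqref{eq:secondorderpl} and the top block of the second column yields~\eqref{eq:secondorderql}. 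Likewise~\eqref{eq:edo2} decouples block-row by block-row, the leftmost block of the first block row giving~\eqref{eq:secondorderpr} and of the second block row giving~\eqref{eq:secondorderqr}. The conjugations by $C_{n-1}$ are tracked exactly as in the first order Sylvester systems~\eqref{eq:structure_PL}--\eqref{eq:structure_PR}, and the remaining blocks (the second block-rows of $Y_n^{\mathsf L}$, resp. the second block-columns of $Y_n^{\mathsf R}$) merely reproduce the same four relations with $n$ shifted to $n-1$ and conjugated by $C_{n-1}$, so they contribute nothing new.

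The computation involves no analytic difficulty; the single point that needs genuine care is that $\mathcal M$ may \emph{not} be split block-wise when applied to $M_n^{\mathsf L}$ or $M_n^{\mathsf R}$ themselves. Indeed the $(1,1)$ block of $\mathcal M(M_n^{\mathsf L})=(M_n^{\mathsf L})'+(M_n^{\mathsf L})^2$ equals $(M_{1,1,n}^{\mathsf L})'+(M_{1,1,n}^{\mathsf L})^2+M_{1,2,n}^{\mathsf L}M_{2,1,n}^{\mathsf L}$, which differs from $\mathcal M(M_{1,1,n}^{\mathsf L})$ precisely by the cross term $M_{1,2,n}^{\mathsf L}M_{2,1,n}^{\mathsf L}$ coming from the off-diagonal blocks of the structure matrix, and similar cross terms enter the $(1,2)$ block and the right-hand case. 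These cross terms are exactly what the definitions~\eqref{eq:Miura_bispectrality_L} and~\eqref{eq:Miura_bispectrality_R} absorb into $\mathsf H_{1,1,n}^{\mathsf L}$, $\mathsf H_{1,2,n}^{\mathsf L}$, $\mathsf H_{1,1,n}^{\mathsf R}$ and $\mathsf H_{2,1,n}^{\mathsf R}$; keeping these symbols unexpanded, the identification of block entries on the two sides of~\eqref{eq:edo1} and~\eqref{eq:edo2} is immediate.
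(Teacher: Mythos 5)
Your proposal is correct and is exactly the argument the paper leaves implicit (its proof is the one-line remark that the corollary follows from Proposition~\ref{teo:second_order_diff}): one substitutes $\mathcal M(M_n^{\mathsf L})=\mathsf H_n^{\mathsf L}+\mathcal M\big(\operatorname{diag}(h^{\mathsf L},-h^{\mathsf R})\big)$ (and its right analogue) into~\eqref{eq:edo1}--\eqref{eq:edo2} and reads off the top block entries, and your remark that the off-diagonal cross terms of $(M_n^{\mathsf L})^2$ are precisely what the definitions~\eqref{eq:Miura_bispectrality_L}--\eqref{eq:Miura_bispectrality_R} absorb into the $\mathsf H$ blocks is the one genuinely non-mechanical point. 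The only caveat is that the block computation literally yields $-\mathsf H_{1,2,n}^{\mathsf L}C_{n-1}P_{n-1}^{\mathsf L}$ and $-P_{n-1}^{\mathsf R}C_{n-1}\mathsf H_{2,1,n}^{\mathsf R}$, so the $C_{n-1}$ factors must be regarded as absorbed into the displayed coefficients (or the corollary's statement carries a small typo), which your tracking of the $C_{n-1}$ conjugations already accounts for.
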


\begin{proof}
Is a direct consequence of Proposition~\ref{teo:second_order_diff}.
\end{proof}

\subsection{Adjoint operators} \label{subsec:nova}

We now elaborate around the idea of adjoint operators in this matrix scenario. 
\begin{defi}
 Given linear operator $L\in L(\C^{N\times N}[z])$ and a matrix of weights $ W(z)$, its adjoint operator $L^*$ is an operator such that
\begin{align*}
\langle L(P), \tilde P \rangle_{ W}&=\langle P, L^*(\tilde P )\rangle_{ W}, & 
P(z),\tilde P(z)\in \C^{N\times N}[z],
\end{align*}
in terms of the sesquiliner form introduced in~\eqref{eq:sesquilinear}.
\end{defi}

Care must be taken at this point because in this definition of adjoint of a matrix differential operator we are not taken the transpose or the Hermitian conjugate of the matrix coefficients as was done in~\cite{duran_3}.

\begin{defi}
Motivated by~\eqref{eq:secondorderpl} and \eqref{eq:secondorderpr} we introduce two linear operators $\pmb \ell^{\mathsf L}$ and $\pmb \ell^{\mathsf R}$, acting on the linear space of polynomials $\C^{N \times N} [z]$ as follows
\begin{align*}
\pmb \ell^{\mathsf L} (P)& :
= P^{\prime\prime} + 2 P^{\prime} h^{\mathsf L} + P \mathcal M (h^{\mathsf L}), &
\pmb \ell^{\mathsf R} (P) & :
= P^{\prime\prime} + 2 h^{\mathsf R} P^{\prime} + \mathcal M (h^{\mathsf R}) P .
\end{align*}
\end{defi}

\begin{lemma} \label{prop:eqrho}
Let us assume that the matrix of weights $ W (z) 
$ do satisfy the following boundary conditions
\begin{align}\label{eq:bconditions}
 W|_{\partial\gamma}&=0_N, &\big( W^{\prime} - 2h^{\mathsf L} W\big)\big|_{\partial \gamma}&=0_N, &
\big( W^{\prime} - 2 W h^{\mathsf R} \big)\big|_{\partial\gamma}&=0_N,
\end{align}
where $\partial\gamma$ is the boundary of the curve $\gamma$, i.e. its endpoints.
Then,
$ W(z)$ satisfies a Pearson Sylvester differential equation~\eqref{eq:Pearson} if, and only if,
$ W(z)$ satisfies the following second order matrix differential equations
\begin{gather}
\label{eq:rhoml}
{ W}^{\prime \prime} -2 \big( h^{\mathsf L} W\big)^{ \prime} + \mathcal M ( h^{\mathsf L} ) W = W\mathcal M ( h^{\mathsf R} ) \, , \\
\label{eq:rhomr}
{ W}^{\prime \prime} -2 \big( W h^{\mathsf R} \big)^{ \prime} + W\mathcal M ( h^{\mathsf R} ) = \mathcal M ( h^{\mathsf L} ) W \, .
\end{gather}
\end{lemma}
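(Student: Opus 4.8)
The plan is to prove the equivalence by going through the Pearson equation \eqref{eq:Pearson} as an intermediate object, since the ``only if'' direction is essentially a differentiation-and-substitution computation and the ``if'' direction will require the boundary conditions \eqref{eq:bconditions} to return from the second order equations to the first order one. First I would recall that, by definition of the Miura transform, if $W$ solves \eqref{eq:Pearson}, i.e. $W' = h^{\mathsf L} W + W h^{\mathsf R}$, then differentiating once more gives
\begin{align*}
W'' = (h^{\mathsf L})' W + h^{\mathsf L} W' + W' h^{\mathsf R} + W (h^{\mathsf R})'
= (h^{\mathsf L})' W + h^{\mathsf L}(h^{\mathsf L}W + W h^{\mathsf R}) + (h^{\mathsf L}W+W h^{\mathsf R})h^{\mathsf R} + W(h^{\mathsf R})'.
\end{align*}
Collecting terms one recognizes $\mathcal M(h^{\mathsf L}) W = \big((h^{\mathsf L})' + (h^{\mathsf L})^2\big)W$ and $W \mathcal M(h^{\mathsf R}) = W\big((h^{\mathsf R})' + (h^{\mathsf R})^2\big)$, and the cross term $2 h^{\mathsf L} W h^{\mathsf R}$ survives. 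On the other hand $\big(h^{\mathsf L}W\big)' = (h^{\mathsf L})'W + h^{\mathsf L}W' = (h^{\mathsf L})'W + h^{\mathsf L}(h^{\mathsf L}W + W h^{\mathsf R})$, so $W'' - 2(h^{\mathsf L}W)' + \mathcal M(h^{\mathsf L})W$ simplifies, after cancellation of the $\mathcal M(h^{\mathsf L})W$ and of the $2h^{\mathsf L}W h^{\mathsf R}$ pieces appropriately, to exactly $W\mathcal M(h^{\mathsf R})$, which is \eqref{eq:rhoml}. The computation for \eqref{eq:rhomr} is the mirror image, grouping the derivative as $\big(W h^{\mathsf R}\big)'$ instead. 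Note that under the hypothesis that $W$ solves \eqref{eq:Pearson}, the boundary conditions in \eqref{eq:bconditions} are not even needed for this direction.

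For the converse, suppose $W$ solves \eqref{eq:rhoml} (the argument starting from \eqref{eq:rhomr} is analogous). Introduce the auxiliary matrix function
\begin{align*}
G(z) := W'(z) - h^{\mathsf L}(z) W(z) - W(z) h^{\mathsf R}(z),
\end{align*}
which measures the failure of the Pearson equation; the goal is to show $G \equiv 0$. The strategy is to show that $G$ satisfies a first order linear homogeneous matrix ODE together with a homogeneous boundary/initial condition, forcing $G=0$ by uniqueness. Differentiating $G$ and substituting the second order equation \eqref{eq:rhoml} to eliminate $W''$ should express $G'$ as a linear combination of $G$ and $W$ with coefficients built from $h^{\mathsf L}, h^{\mathsf R}$ and their derivatives; the point of the algebra (which is the reverse of the direct computation above) is that the $W$-only terms cancel identically, leaving $G' = h^{\mathsf L} G$ (or $G' = h^{\mathsf L}G + $ something proportional to $G$). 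Indeed, one computes $G' = W'' - (h^{\mathsf L})'W - h^{\mathsf L}W' - W'h^{\mathsf R} - W(h^{\mathsf R})'$, and writing $W'' = 2(h^{\mathsf L}W)' - \mathcal M(h^{\mathsf L})W + W\mathcal M(h^{\mathsf R})$ from \eqref{eq:rhoml} and $W' = G + h^{\mathsf L}W + W h^{\mathsf R}$ everywhere, the terms reorganize into $G' = h^{\mathsf L}G$.

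It then remains to kill the integration constant, and this is where \eqref{eq:bconditions} enters. Evaluating $G$ at an endpoint of $\gamma$: the first condition $W|_{\partial\gamma}=0_N$ gives $G|_{\partial\gamma} = W'|_{\partial\gamma}$, and the second condition $\big(W' - 2h^{\mathsf L}W\big)|_{\partial\gamma}=0_N$ combined with $W|_{\partial\gamma}=0_N$ gives $W'|_{\partial\gamma}=0_N$, hence $G|_{\partial\gamma}=0_N$. Since $G$ solves the linear homogeneous ODE $G' = h^{\mathsf L}G$ with $h^{\mathsf L}$ entire and vanishes at a point, $G\equiv 0$ by uniqueness of solutions of linear ODEs, so $W$ satisfies \eqref{eq:Pearson}; by Proposition~\ref{prop:Pearson} this is the desired Pearson Sylvester equation. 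Starting instead from \eqref{eq:rhomr} one uses $\tilde G := W' - h^{\mathsf L}W - W h^{\mathsf R}$ again but derives $\tilde G' = \tilde G h^{\mathsf R}$ and uses the third boundary condition $\big(W' - 2W h^{\mathsf R}\big)|_{\partial\gamma}=0_N$ to conclude identically. I expect the main obstacle to be purely bookkeeping: making sure that in the converse direction every $W$-only (i.e. $G$-independent) term really does cancel so that $G$ genuinely satisfies a \emph{homogeneous} linear equation rather than an inhomogeneous one — if an inhomogeneous $W$-term survived, the argument would collapse, so the cancellation is the crux and must be checked carefully against the precise definition of $\mathcal M$.
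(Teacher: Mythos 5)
Your forward direction is exactly the paper's computation and is correct; as you note, the boundary conditions play no role there. The converse is where you depart from the paper, and where there is a genuine gap. The paper simply \emph{adds} \eqref{eq:rhoml} and \eqref{eq:rhomr} (both of which are hypotheses in this direction), which cancels the Miura terms and leaves $W''=\big(h^{\mathsf L}W+Wh^{\mathsf R}\big)'$; integrating once, the defect $G:=W'-h^{\mathsf L}W-Wh^{\mathsf R}$ is a \emph{constant} matrix, and the boundary conditions (which force $G\to 0_N$ at an endpoint) kill that constant. You instead use only \eqref{eq:rhoml} and derive a first order ODE for $G$. Two comments. First, a small computational slip: carrying out your own substitution gives $G'=h^{\mathsf L}G-Gh^{\mathsf R}$, not $G'=h^{\mathsf L}G$; the $W$-only terms do cancel as you hoped, and the equation is still linear and homogeneous in $G$, so this alone does not break your scheme.

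Second, and this is the real problem: the step ``$G$ solves a linear homogeneous ODE and vanishes at a point, hence $G\equiv 0$ by uniqueness'' presupposes that the endpoint of $\gamma$ at which you impose $G=0_N$ is an actual point of the domain. In every application in this paper ($\gamma=\mathbb R$, or an unbounded branch of a hyperbola) the endpoints are at infinity, so \eqref{eq:bconditions} are statements about limits, not initial data, and a nonzero solution of $G'=h^{\mathsf L}G-Gh^{\mathsf R}$ can perfectly well tend to $0_N$ there: already for $N=1$ with $h^{\mathsf L}=a^{\mathsf L}z$ and $h^{\mathsf R}=a^{\mathsf R}z$ the general solution is $G=c\operatorname{e}^{(a^{\mathsf L}-a^{\mathsf R})z^2/2}$, which vanishes at $\pm\infty$ for every $c$ whenever $\operatorname{Re}(a^{\mathsf L}-a^{\mathsf R})<0$. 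So uniqueness does not apply and $G\equiv 0$ does not follow from your argument. The repair is to use the second hypothesis you are discarding: adding \eqref{eq:rhoml} and \eqref{eq:rhomr} upgrades your ODE to $G'=0_N$, and a constant matrix whose limit at an endpoint is $0_N$ is $0_N$ — no uniqueness theorem needed. (Your argument as written is fine, and in fact proves a slightly stronger statement, in the special case where $\gamma$ has finite endpoints.)
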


\begin{proof}
Taking derivative on~\eqref{eq:Pearson}, we get 
\begin{gather*}
{ W}^{\prime \prime} =\mathcal M ( h^{\mathsf L} ) W + W \mathcal M ( h^{\mathsf R} ) + 2 h^{\mathsf L} W h^{\mathsf R}
\end{gather*}
But, it is easy to see that
\begin{align*}
\big( h^{\mathsf L} W\big)^{ \prime} &=\mathcal M ( h^{\mathsf L} ) W + h^{\mathsf L} W h^{\mathsf R} , &
\big( W h^{\mathsf R} \big)^{ \prime} &= W\mathcal M ( h^{\mathsf R} ) + h^{\mathsf L} W h^{\mathsf R},
\end{align*} 
and so we arrive to~\eqref{eq:rhoml} and~\eqref{eq:rhomr}.

The reciprocally result is a consequence of adding the equations~\eqref{eq:rhoml},~\eqref{eq:rhomr} and the boundary conditions~\eqref{eq:bconditions}.
\end{proof}

Now, we will see that these two operators are adjoint to each other with respect to the sesquilinear form induced by the weight functions $ W$.
\begin{pro} \label{prop:aa}
Whenever $ W (z)$ satisfies~\eqref{eq:Pearson} and the boundary conditions~\eqref{eq:bconditions}, we have that
\begin{align}\label{eq:adjoint_ell}
 \pmb \ell^{\mathsf R}= \big(\pmb \ell^{\mathsf L}\big)^*,
\end{align} 
or, equivalently,
\begin{align*}
\langle \pmb \ell^{\mathsf L} ( P) , \tilde P\rangle_ W& = \langle P , \pmb \ell^{\mathsf R} ( \tilde P ) \rangle_ W, &P(z),\tilde P (z)&\in \mathbb \C^{N\times N}[z] .
\end{align*}
\end{pro}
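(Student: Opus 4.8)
The plan is to verify the adjointness relation $\pmb\ell^{\mathsf R}=(\pmb\ell^{\mathsf L})^*$ by direct computation: we take two arbitrary matrix polynomials $P(z),\tilde P(z)\in\C^{N\times N}[z]$, form the sesquilinear pairing $\langle\pmb\ell^{\mathsf L}(P),\tilde P\rangle_W$, and integrate by parts twice, using the Pearson equation~\eqref{eq:Pearson} to move all derivatives off $W$ and onto $\tilde P$. The boundary conditions~\eqref{eq:bconditions} are precisely what is needed to kill the boundary terms produced by the two integrations by parts, and Lemma~\ref{prop:eqrho} (equations~\eqref{eq:rhoml} and~\eqref{eq:rhomr}) packages the second-derivative identities for $W$ that make the interior terms collapse to exactly $\langle P,\pmb\ell^{\mathsf R}(\tilde P)\rangle_W$.

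Concretely, first I would write out
\begin{align*}
\langle\pmb\ell^{\mathsf L}(P),\tilde P\rangle_W
=\frac{1}{2\pi\ii}\int_\gamma\Big(P''+2P'h^{\mathsf L}+P\,\mathcal M(h^{\mathsf L})\Big)W\tilde P\,\d z,
\end{align*}
and integrate the $P''W\tilde P$ term by parts twice to throw the two derivatives onto $W\tilde P$, picking up a boundary contribution of the schematic form $\big[P'(W\tilde P)-P(W\tilde P)'\big]_{\partial\gamma}$. Next I would integrate the $2P'h^{\mathsf L}W\tilde P$ term by parts once, producing a boundary term $\big[-2P\,h^{\mathsf L}W\tilde P\big]_{\partial\gamma}$ plus the interior term $-2P\,(h^{\mathsf L}W\tilde P)'=-2P\,(h^{\mathsf L}W)'\tilde P-2P\,h^{\mathsf L}W\tilde P'$. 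Collecting all boundary contributions one finds they combine into expressions proportional to $W|_{\partial\gamma}$ and $(W'-2h^{\mathsf L}W)|_{\partial\gamma}$ — exactly the first two conditions in~\eqref{eq:bconditions} — so they vanish; one should also check that the remaining boundary piece involving $\tilde P'$ pairs against $W|_{\partial\gamma}=0_N$ as well.

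What remains is an interior integral of $P$ times a second-order differential operator applied to $W\tilde P$ (with $\tilde P'$ and $\tilde P$ terms), and the task is to reorganize it as $\frac{1}{2\pi\ii}\int_\gamma P\,W\,(\tilde P''+2h^{\mathsf R}\tilde P'+\mathcal M(h^{\mathsf R})\tilde P)\,\d z$. Here the identity $(h^{\mathsf L}W)'=\mathcal M(h^{\mathsf L})W+h^{\mathsf L}Wh^{\mathsf R}$ from the proof of Lemma~\ref{prop:eqrho}, together with the analogue $(Wh^{\mathsf R})'=W\mathcal M(h^{\mathsf R})+h^{\mathsf L}Wh^{\mathsf R}$ and the expansion $W''=\mathcal M(h^{\mathsf L})W+W\mathcal M(h^{\mathsf R})+2h^{\mathsf L}Wh^{\mathsf R}$, are what let the coefficient matrices sandwiching $\tilde P,\tilde P',\tilde P''$ telescope: the $\mathcal M(h^{\mathsf L})W$ pieces cancel, the cross terms $h^{\mathsf L}Wh^{\mathsf R}$ reorganize into $Wh^{\mathsf R}$-type factors, and one is left with $W\tilde P''$, $2Wh^{\mathsf R}\tilde P'$, and $W\mathcal M(h^{\mathsf R})\tilde P$.

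The main obstacle I anticipate is purely bookkeeping rather than conceptual: because we are in the non-commutative matrix setting, the order of every factor must be tracked scrupulously — $W$ sits between $P$ on the left and $\tilde P$ on the right throughout, $h^{\mathsf L}$ always acts from the left of $W$ and $h^{\mathsf R}$ from the right, and Leibniz expansions of $(h^{\mathsf L}W\tilde P)'$ must not be permuted. The delicate point is ensuring that after the two integrations by parts the interior terms really do assemble into $\pmb\ell^{\mathsf R}(\tilde P)$ and not some other rearrangement; this is exactly where the asymmetry of the boundary conditions in~\eqref{eq:bconditions} (one with $h^{\mathsf L}$ on the left, one with $h^{\mathsf R}$ on the right) and the two Miura identities pull their weight. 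Once the cancellations are checked termwise, the stated identity $\pmb\ell^{\mathsf R}=(\pmb\ell^{\mathsf L})^*$ follows immediately from the definition of the adjoint.
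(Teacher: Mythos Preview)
Your proposal is correct and follows essentially the same route as the paper's own proof: two integrations by parts in $\langle\pmb\ell^{\mathsf L}(P),\tilde P\rangle_W$, with the boundary conditions~\eqref{eq:bconditions} killing the boundary terms and the Pearson identity $W'-h^{\mathsf L}W=Wh^{\mathsf R}$ together with the second-order relation~\eqref{eq:rhoml} from Lemma~\ref{prop:eqrho} collapsing the interior into $\langle P,\pmb\ell^{\mathsf R}(\tilde P)\rangle_W$. The only cosmetic difference is that the paper first invokes bilinearity to reduce to the case $P=P_n^{\mathsf L}$, $\tilde P=P_k^{\mathsf R}$, whereas you work with arbitrary polynomials throughout; your version is arguably cleaner since the biorthogonality plays no role in the computation.
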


\begin{proof}
By using the linearity of these operators it is sufficient to prove
\begin{gather*}
\langle \pmb \ell^{\mathsf L} ( P_n^{\mathsf L} ) \, , \, P_k^{\mathsf R} \rangle_ W = \langle P_n^{\mathsf L} \, , \, \pmb \ell^{\mathsf R} ( P_k^{\mathsf R} ) \rangle_ W
\, , \ \ n , k \in \mathbb N \, .
\end{gather*}
If we omit, for the sake of simplicity, the $z$ dependece of the integrands in the integrals, we have
\begin{align*}
\langle \pmb \ell^{\mathsf L} ( P_n^{\mathsf L} ) \, , \, P_k^{\mathsf R} \rangle_ W 
 = 
\int_{\gamma} (P_n^{\mathsf L})^{\prime\prime} \, W \, P_k^{\mathsf R} \, \operatorname d z
+ 2\int_{\gamma} (P_n^{\mathsf L})^{\prime} \, ( h^{\mathsf L} \, W ) \, P_k^{\mathsf R} \, \operatorname d z
+ \int_{\gamma} P_n^{\mathsf L} \, 
\mathcal M ( h^{\mathsf L} ) \, W \, P_k^{\mathsf R} \, \operatorname d z \, ,
\end{align*}
and, using integration by parts, we find
\begin{align*}
\langle \pmb \ell^{\mathsf L} ( P_n^{\mathsf L} ) , P_k^{\mathsf R} \rangle_ W
 & = \big((P_n^{\mathsf L})^{\prime} W P_k^{\mathsf R}\big) \big|_{\partial\gamma} 
 - \int_{\gamma} (P_n^{\mathsf L})^{\prime} \Big( \big( W P_k^{\mathsf R}\big)^{\prime} - 2h^{\mathsf L} W \Big) P_k^{\mathsf R} \d z
+ \int_{\gamma} P_n^{\mathsf L} 
\mathcal M ( h^{\mathsf L} ) W P_k^{\mathsf R} \d z
 \\
 & \hspace{-.085cm} \begin{multlined}[t][0.8\textwidth]
 =
 \big((P_n^{\mathsf L})^{\prime} W P_k^{\mathsf R}\big) \big|_{\partial\gamma} - \Big(P_n^{\mathsf L} \Big( \big( W P_k^{\mathsf R}\big)^{\prime} - 2h^{\mathsf L} W \Big) P_k^{\mathsf R}\Big) \Big|_{\partial\gamma} \\
+ \int_{\gamma} P_n^{\mathsf L} \, \big( ( W \, P_k^{\mathsf R} )^{\prime\prime}
- 2 \, ( h^{\mathsf L} \, W \, P_k^{\mathsf R} )^{\prime}
+ \mathcal M ( h^{\mathsf L} ) \, W \, P_k^{\mathsf R} \big) 
\, \operatorname d z \, .
 \end{multlined}
\end{align*}
Now, considering the boundary conditions~\eqref{eq:bconditions}
and taking into account that
\begin{gather*}
( W \, P_k^{\mathsf R} )^{\prime\prime} = W^{\prime\prime} \, P_k^{\mathsf R} 
+ 2 \, W^{\prime} \, (P_k^{\mathsf R})^{\prime} + W \, (P_k^{\mathsf R})^{\prime\prime} \, , \ \ \
( h^{\mathsf L} \, W \, P_k^{\mathsf R} )^{\prime} = (h^{\mathsf L} \, W)^{\prime} \, P_k^{\mathsf R}
+ ( h^{\mathsf L} \, W) \, (P_k^{\mathsf R})^{\prime} \, ,
 \end{gather*} 
we arrive to
\begin{multline*}
\langle \pmb \ell^{\mathsf L} ( P_n^{\mathsf L} ) \, , \, P_k^{\mathsf R} \rangle_ W 
 =
\int_{\gamma} P_n^{\mathsf L} 
\big( W^{\prime\prime} 
- 2 (h^{\mathsf L} \, W)^{\prime}
+ \mathcal M ( h^{\mathsf L} ) W 
\big) P_k^{\mathsf R} \d z
 \\
+
2 \int_{\gamma} P_n^{\mathsf L} 
\big( W^{\prime} 
- h^{\mathsf L} W
\big) ( P_k^{\mathsf R} )^{\prime} \d z
+ \int_{\gamma} P_n^{\mathsf L} W ( P_k^{\mathsf R} )^{\prime\prime} \d z;
\end{multline*}
and so
\begin{align*}
\langle \pmb \ell^{\mathsf L} ( P_n^{\mathsf L} ) \, , \, P_k^{\mathsf R} \rangle_ W 
 &=
\int_{\gamma} P_n^{\mathsf L} W 
\big( (P_k^{\mathsf R} )^{\prime\prime}
+ 2 \, h^{\mathsf R} (P_k^{\mathsf R})^{\prime}
+ \mathcal M (h^{\mathsf R}) P_k^{\mathsf R}
\big) \d z,& 
n,k&\in\{0,1,2,\dots\} 
\end{align*}
or, equivalently,
\begin{gather*}
\langle \pmb \ell^{\mathsf L} ( P_n^{\mathsf L} ) \, , \, P_k^{\mathsf R} \rangle_ W 
 =
\langle P_n^{\mathsf L} \, , \, \pmb \ell^{\mathsf R} ( P_k^{\mathsf R} ) \rangle_ W
 \, ,
\end{gather*}
which completes the proof.
\end{proof}
\begin{rem}
 For the symmetric or Hermitian reductions we find that
 \begin{align*}
 \pmb \ell^{\mathsf R} (P)&= \big(\pmb \ell^{\mathsf L}(P^\top)\big)^\top, & \text{symmetric},\\
  \pmb \ell^{\mathsf R} (P)&= \big(\pmb \ell^{\mathsf L}(P^\dagger)\big)^\dagger, & \text{Hermitian},
 \end{align*}
 where in the last case we take $x\in\R$. Relation \eqref{eq:adjoint_ell} reads in this case as follows
 \begin{align*}
 \pmb\ell^* (P)&=(\pmb\ell(P^\top))^\top, &\text {symmetric},\\
  \pmb\ell^* (P)&=(\pmb\ell(P^\dagger))^\dagger, &\text {Hermitian};
 \end{align*}
 for $P$ any matrix polynomial and $\pmb\ell:=\pmb \ell^{\mathsf L}$.
\end{rem}

\begin{defi}
 Let $\alpha^\mathsf L$ and $\alpha^\mathsf R$ be two $N \times N$ matrices and define the following linear operators acting on the space of matrix polynomials $ \mathbb \C^{N\times N}[z]$ as follows
 \begin{align*} 
 {\mathcal L}^{\mathsf L} (P) &:= P^{\prime\prime} + 2 P^{\prime} h^{\mathsf L} + P {\alpha}^{\mathsf L}, &
 {\mathcal L}^{\mathsf R} (P) &:=P^{\prime\prime} + 2 h^{\mathsf R} P^{\prime} + {\alpha}^{\mathsf R} P.
 \end{align*}
\end{defi}

 Observe that
\begin{align*} 
{\mathcal L}^{\mathsf L} (P) &= \pmb \ell^{\mathsf L} (P)- P \, {\mathcal M} (h^{\mathsf L}) + P {\alpha}^{\mathsf L},& 
{\mathcal L}^{\mathsf R} (P) &=\pmb \ell^{\mathsf R} (P - \mathcal M (h^{\mathsf R}) P + {\alpha}^{\mathsf R} P.
\end{align*}

We have the following characterization.

\begin{teo} \label{teo:novo}
The following conditions are equivalent:

\hangindent=.9cm \hangafter=1
{\noindent}$\phantom{ol}${\rm i)}
${\mathcal L}^{\mathsf R}=\big({\mathcal L}^{\mathsf L} \big)^*$ with respect to the matrix of weights 
$ W (z)$.

\hangindent=.9cm \hangafter=1
{\noindent}$\phantom{ol}${\rm ii)}
The matrix of weights $ W (z)$ satisfies the matrix Pearson equation~\eqref{eq:Pearson} with the boundary conditions~\eqref{eq:bconditions} as well as fulfills the constraint
\begin{gather}
\label{eq:escondida}
\big( {\alpha}^{\mathsf L} - \, {\mathcal M} (h^{\mathsf L}) \big) W = W \big( {\alpha}^{\mathsf R} - \, {\mathcal M} (h^{\mathsf R}) \big).
\end{gather}

\hangindent=.9cm \hangafter=1
{\noindent}$\phantom{ol}${\rm iii)}
The matrix of weights $ W (z)$ satisfies the boundary conditions~\eqref{eq:bconditions} as well as 
\begin{gather}
\label{eq:rhoalphal}
{ W}^{\prime \prime} -2 \big( h^{\mathsf L} W\big)^{ \prime} 
+ \alpha^{\mathsf L} W = W \alpha^{\mathsf R} , \\
\label{eq:rhoalphar}
{ W}^{\prime \prime} -2 \big( W h^{\mathsf R} \big)^{ \prime} 
+ W \alpha^{\mathsf R} = \alpha^{\mathsf L} W .
\end{gather}
\end{teo}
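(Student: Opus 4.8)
The plan is to prove the cycle i)~$\Rightarrow$~iii)~$\Rightarrow$~ii)~$\Rightarrow$~i). The implications iii)~$\Rightarrow$~ii) and ii)~$\Rightarrow$~i) are short, so I would dispatch them first and concentrate the effort on i)~$\Rightarrow$~iii).

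For iii)~$\Rightarrow$~ii): adding~\eqref{eq:rhoalphal} and~\eqref{eq:rhoalphar} cancels the $\alpha$-terms and yields $W''=\big(h^{\mathsf L}W+Wh^{\mathsf R}\big)'$; since~\eqref{eq:bconditions} forces $W$, and hence $W'$, to vanish at $\partial\gamma$, the integration constant is zero and $W$ satisfies the Pearson equation~\eqref{eq:Pearson}. Lemma~\ref{prop:eqrho} then returns~\eqref{eq:rhoml}, and subtracting it from~\eqref{eq:rhoalphal} produces the constraint~\eqref{eq:escondida}; this is ii). For ii)~$\Rightarrow$~i): since $W$ satisfies~\eqref{eq:Pearson} and~\eqref{eq:bconditions}, Proposition~\ref{prop:aa} gives $\pmb\ell^{\mathsf R}=(\pmb\ell^{\mathsf L})^*$. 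Using the identities $\mathcal L^{\mathsf L}(P)=\pmb\ell^{\mathsf L}(P)+P\big(\alpha^{\mathsf L}-\mathcal M(h^{\mathsf L})\big)$ and $\mathcal L^{\mathsf R}(P)=\pmb\ell^{\mathsf R}(P)+\big(\alpha^{\mathsf R}-\mathcal M(h^{\mathsf R})\big)P$ recorded just before the theorem, one writes $\langle\mathcal L^{\mathsf L}(P),\tilde P\rangle_W=\langle\pmb\ell^{\mathsf L}(P),\tilde P\rangle_W+\tfrac1{2\pi\ii}\int_\gamma P\big(\alpha^{\mathsf L}-\mathcal M(h^{\mathsf L})\big)W\tilde P\,\d z$; the first summand equals $\langle P,\pmb\ell^{\mathsf R}(\tilde P)\rangle_W$ by adjointness, and~\eqref{eq:escondida} lets one replace $\big(\alpha^{\mathsf L}-\mathcal M(h^{\mathsf L})\big)W$ by $W\big(\alpha^{\mathsf R}-\mathcal M(h^{\mathsf R})\big)$ in the integral, turning it into $\langle P,\big(\alpha^{\mathsf R}-\mathcal M(h^{\mathsf R})\big)\tilde P\rangle_W$. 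Summing gives $\langle P,\mathcal L^{\mathsf R}(\tilde P)\rangle_W$, i.e. i).

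For i)~$\Rightarrow$~iii): one computes $\langle\mathcal L^{\mathsf L}(P),\tilde P\rangle_W-\langle P,\mathcal L^{\mathsf R}(\tilde P)\rangle_W=\tfrac1{2\pi\ii}\int_\gamma\big(P''W\tilde P+2P'h^{\mathsf L}W\tilde P-PW\tilde P''-2PWh^{\mathsf R}\tilde P'+P(\alpha^{\mathsf L}W-W\alpha^{\mathsf R})\tilde P\big)\,\d z$ and integrates by parts, as in the proof of Proposition~\ref{prop:aa} but keeping the endpoint contributions, rewriting it as $\tfrac1{2\pi\ii}\big[P'W\tilde P-P(W'-2h^{\mathsf L}W)\tilde P-PW\tilde P'\big]_{\partial\gamma}+\tfrac1{2\pi\ii}\int_\gamma P\big(\Phi\tilde P+\Psi\tilde P'\big)\,\d z$, with $\Phi:=W''-2(h^{\mathsf L}W)'+\alpha^{\mathsf L}W-W\alpha^{\mathsf R}$ and $\Psi:=2\big(W'-h^{\mathsf L}W-Wh^{\mathsf R}\big)$. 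Assumption i) says this vanishes for all matrix polynomials $P,\tilde P$. Testing against $P$ vanishing to second order at the endpoints of $\gamma$ kills the boundary pairing, so $\int_\gamma P\big(\Phi\tilde P+\Psi\tilde P'\big)\,\d z=0$ there; taking $\tilde P=I$, then $\tilde P=zI$, and letting $P$ run over $z^m$ times the square of the polynomial vanishing at the endpoints, one gets all polynomial moments of $\Phi$ and of $\Psi$ against $\gamma$ equal to zero, whence $\Phi\equiv0$ and $\Psi\equiv0$ (entire, hence everywhere) by the usual moment/Cauchy-transform vanishing argument — $\Psi\equiv0$ is the Pearson equation~\eqref{eq:Pearson} and $\Phi\equiv0$ is~\eqref{eq:rhoalphal}. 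With $\Phi=\Psi=0$ the bulk integral drops out and the identity reduces to $\big[P'W\tilde P-P(W'-2h^{\mathsf L}W)\tilde P-PW\tilde P'\big]_{\partial\gamma}=0$ for all $P,\tilde P$; prescribing the values and first derivatives of $P,\tilde P$ freely at each endpoint forces $W|_{\partial\gamma}=0$ and $W'|_{\partial\gamma}=0$, i.e. the boundary conditions~\eqref{eq:bconditions}. Finally Lemma~\ref{prop:eqrho} applied to~\eqref{eq:Pearson} and~\eqref{eq:bconditions} gives~\eqref{eq:rhoml}, subtracting it from~\eqref{eq:rhoalphal} gives~\eqref{eq:escondida}, and adding~\eqref{eq:escondida} to~\eqref{eq:rhomr} gives~\eqref{eq:rhoalphar}, which completes iii).

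I expect the main obstacle to be the i)~$\Rightarrow$~iii) step: decoupling the boundary pairing from the bulk integral so that the boundary conditions and the differential identities can be extracted separately, and justifying that the vanishing of all polynomial moments of $\Phi$ and $\Psi$ against $\gamma$ forces these matrices of entire functions to vanish identically. This is where the regularity of $W$ and the geometry of $\gamma$ really enter — in particular the case of endpoints at infinity, where~\eqref{eq:bconditions} must be read as a decay statement on $W$ and the endpoint contributions vanish automatically rather than through the choice of test polynomial.
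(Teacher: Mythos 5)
Your proposal is correct, and the legs iii)~$\Rightarrow$~ii)~$\Rightarrow$~i) essentially coincide with what the paper does: the paper also reduces the adjointness of ${\mathcal L}^{\mathsf L},{\mathcal L}^{\mathsf R}$ to that of the zeroth--order corrections via Proposition~\ref{prop:aa} (hence to~\eqref{eq:escondida}), and also obtains~\eqref{eq:Pearson} and~\eqref{eq:escondida} by adding and subtracting~\eqref{eq:rhoalphal}--\eqref{eq:rhoalphar}. Where you genuinely diverge is the direction i)~$\Rightarrow$~iii). The paper treats i)~$\Leftrightarrow$~ii) as an immediate consequence of Proposition~\ref{prop:aa}, but that proposition \emph{presupposes} that $W$ satisfies~\eqref{eq:Pearson} and~\eqref{eq:bconditions}, which are part of what ii) asserts; read literally, the paper's i)~$\Rightarrow$~ii) is therefore circular, or at best proves the equivalence only under the standing hypothesis that the Pearson equation and boundary conditions already hold. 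Your direct integration by parts — isolating the bulk densities $\Phi=W''-2(h^{\mathsf L}W)'+\alpha^{\mathsf L}W-W\alpha^{\mathsf R}$ and $\Psi=2(W'-h^{\mathsf L}W-Wh^{\mathsf R})$ together with the endpoint pairing, and then extracting $\Psi\equiv 0$, $\Phi\equiv 0$ and the boundary conditions by Hermite interpolation of the test polynomials — actually derives these hypotheses from i), which is what the theorem claims. The price is the density step you flag: vanishing of all polynomial moments of $\Phi,\Psi$ along $\gamma$ forces them to vanish, which for H\"older data on a curve with connected complement follows from the vanishing of the Cauchy transform near infinity plus analytic continuation and Plemelj. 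Two small cautions: $\Phi$ and $\Psi$ involve $W$ and are defined only on $\gamma$, so ``entire, hence everywhere'' should read ``identically zero on $\gamma$'' (which is all that iii) asserts); and when the endpoints are at infinity the interpolation argument for the boundary conditions is unavailable and~\eqref{eq:bconditions} must indeed be built into the decay assumptions, exactly as you note.
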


\begin{proof}
Following the ideas in the proof of Proposition~\ref{prop:aa}
\begin{gather*}
\langle {\mathcal L}^{\mathsf L} ( P ) , \tilde P \rangle_ W = \langle P , {\mathcal L}^{\mathsf R} (\tilde P ) \rangle_ W
\end{gather*}
if and only if
\begin{gather*}
\langle- P \, {\mathcal M} (h^{\mathsf L}) + P {\alpha}^{\mathsf L} , \tilde P \rangle_ W = \langle P , - \mathcal M (h^{\mathsf R}) \tilde P + {\alpha}^{\mathsf R} \tilde P \rangle_ W
\end{gather*}
that is~\eqref{eq:escondida} takes place, and so i) is equivalent to ii).

To prove that i) is equivalent to iii) observe that, adding~\eqref{eq:rhoalphal} and~\eqref{eq:rhoalphar}, the following~holds
\begin{gather*}
{ W}^{\prime \prime} = \big( h^{\mathsf L} W \big)^{ \prime} +
\big( W h^{\mathsf R} \big)^{ \prime} \, ,
\end{gather*} 
which transforms~\eqref{eq:Pearson} if we integrate requesting boundary conditions~\eqref{eq:bconditions}.
Moreover, if we subtract~\eqref{eq:rhoalphal} and~\eqref{eq:rhoalphar} we arrive directly to~\eqref{eq:escondida}.
\end{proof}

\begin{rem}
 For the symmetric or Hermitian reductions we find that
 \begin{align*}
 \mathcal L^{\mathsf R} (P)&= \big( \mathcal L^{\mathsf L}(P^\top)\big)^\top, & \text{symmetric},\\
 \mathcal L^{\mathsf R} (P)&= \big( \mathcal L^{\mathsf L}(P^\dagger)\big)^\dagger, & \text{Hermitian},
 \end{align*}
 where in the last case we take $x\in\R$.
 \\ 
Moreover, the following are equivalent conditions
 
\begin{enumerate}
 \item 
Equations 
 \begin{align}\label{eq:adjoint_reduced}
\begin{aligned}
 \mathcal L^* (P)&=( \mathcal L(P^\top))^\top,  \qquad \qquad \qquad \text {symmetric},\\
 \mathcal L^* (P)&=( \mathcal L(P^\dagger))^\dagger,  \qquad\qquad\qquad \text {Hermitian};
\end{aligned}
 \end{align}
are satisfied by any matrix polynomial $P$ , where $ \mathcal L:= \mathcal L^{\mathsf L}$.
 
 \item The matrix of weights $ W (z)$ satisfies the matrix Pearson equation~\eqref{eq:Pearson:_reduced} with the boundary conditions
  \begin{align}\label{eq:boundaryc_reduced}
 W|_{\partial\gamma}&=0_N, &\big( W^{\prime} - 2h W\big)\big|_{\partial \gamma}&=0_N, 
 \end{align}
 as well as fulfills the constraint
 \begin{align*}
 \big( {\alpha}- \, {\mathcal M} (h) \big) W &= W \big( {\alpha}^\top - {\mathcal M} (h^\top) \big), & \text{symmetric},\\
 \big( {\alpha}- \, {\mathcal M} (h) \big) W &= W \big( {\alpha}^\dagger - {\mathcal M} ((h(\bar z))^\dagger) \big), & \text{Hermitian},
 \end{align*}

\item 
The matrix of weights $ W (z)$ satisfies the boundary conditions~\eqref{eq:boundaryc_reduced}
as well as 
\begin{align}\label{eq:second_order_reduced}
\begin{aligned}
 { W}^{\prime \prime} -2 \big( h W\big)^{ \prime} 
 + \alpha W &= W \alpha^\top,&\text{symmetric},\\
 { W}^{\prime \prime} -2 \big( h W\big)^{ \prime} 
 + \alpha W &= W \alpha^\dagger,&\text{Hermitian}.
\end{aligned}
 \end{align}
\end{enumerate}
\end{rem}
\subsection{Eigenvalue problems}




Now we discuss a result that links our results based on the Riemann--Hilbert problem with previous seminal results by Grünbaum and Durán~\cite{duran_3,duran2004,duran_1,duran_2}.
The next theorem shows when the polynomials and associated functions of second kind are eigenfunctions of a second order operator.

\begin{teo}[Eigenvalue problems for Hermite matrix orthogonal polynomials] \label{teo:nuevo}
Let $h^\mathsf L(z)$ and $h^\mathsf R(z)$ be of degree one matrix polynomials, i.e.
\begin{align*}
h^{\mathsf L}(z) & = A^{\mathsf L} z + B^{\mathsf L} , &
h^{\mathsf R}(z)=A^{\mathsf R} z +B^{\mathsf R} , & &
A^{\mathsf L}, A^{\mathsf R}, B^{\mathsf L}, B^{\mathsf R} \in \mathbb C^{N \times N} ,
\end{align*} 
with $A^{\mathsf L}, A^{\mathsf R}$ definite negative, and $ W(z)$ a matrix of weights a solution of~\eqref{eq:rhoalphal},~\eqref{eq:rhoalphar} subject to the boundary conditions~\eqref{eq:bconditions}.
Then, the following conditions are equivalent:

\hangindent=.9cm \hangafter=1
{\noindent}$\phantom{ol}${\rm i)} \label{1} 
The operators ${\mathcal L}^{\mathsf L}$ and ${\mathcal L}^{\mathsf R} $ are adjoint operators with respect to the matrix of weights $ W(z)$,
i.e. ${\mathcal L}^{\mathsf R}=\big({\mathcal L}^{\mathsf L}\big)^*$.

\hangindent=.9cm \hangafter=1
{\noindent}$\phantom{ol}${\rm ii)} \label{2} 
The biorthogonal polynomial sequences with respect to $ W(z)$, say
$\big\{ P_n^{\mathsf L}(z) \big\}_{n\in\N}$, $ \big\{ P_n^{\mathsf R}(z) \big\}_{n\in\N}$, 
are eigenfunctions of ${\mathcal L}^{\mathsf L}$ and ${\mathcal L}^{\mathsf R}$, i.e. there exists $N \times N $ matrices, $\lambda_n^{\mathsf L}$, $\lambda_n^{\mathsf R}$ such that
\begin{align} \label{eq:eigensystems}
{\mathcal L}^{\mathsf L} (P_n^{\mathsf L})
 &= \lambda_n^{\mathsf L} P_n^{\mathsf L}, &
 {\mathcal L}^{\mathsf R} (P_n^{\mathsf R}) &= P_n^{\mathsf R} \lambda_n^{\mathsf R},
\end{align}
with $\lambda_n^{\mathsf L} C_n^{-1} = C_n^{-1} \lambda_n^{\mathsf R}$, $n \in \mathbb N$.

\hangindent=.9cm \hangafter=1
{\noindent}$\phantom{ol}${\rm iii)} \label{3} 
The functions of second kind, $ \big\{ Q_n^\mathsf L(z) \big\}_{n \in \mathbb N}$ and $ \big\{ Q_n^\mathsf R(z) \big\}_{n \in \mathbb N}$, associated with the biorthogonal polynomials, $\big\{ P_n^\mathsf L (z) \big\}_{n \in \mathbb N}$ 
and $\big\{ P_n^\mathsf R (z) \big\}_{n \in \mathbb N}$,
fulfill the second order differential~equations,
\begin{align} \label{eq:eigensystemql}
\big(Q_n^{\mathsf L} \big)^{\prime \prime}(z) 
- 2\big(Q_n^{\mathsf L} \big)^{ \prime}(z) \, h^{\mathsf R} (z)
+ Q_n^{\mathsf L} (z) \, (\alpha^{\mathsf R} - 2 A^{\mathsf R} ) 
 &= \lambda_n^{\mathsf L} \, Q_n^{\mathsf L} (z),
 \\
\label{eq:eigensystemqr}
\big(Q_n^{\mathsf R} \big)^{\prime \prime}(z) 
- 2 h^{\mathsf L} (z) \big(Q_n^{\mathsf R} \big)^{ \prime}(z) 
 +
(\alpha^{\mathsf L} - 2 A^{\mathsf L} ) \, Q_n^{\mathsf R} (z) 
 &= 
Q_n^{\mathsf R} \, \lambda_n^{\mathsf R}.
\end{align}
\end{teo}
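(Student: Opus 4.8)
The plan is to run the cycle of implications i) $\Rightarrow$ ii) $\Rightarrow$ iii) $\Rightarrow$ i). Note first that the standing hypotheses on $W(z)$ are exactly condition iii) of Theorem~\ref{teo:novo}, so $W$ already obeys the Pearson equation~\eqref{eq:Pearson} and the hidden constraint~\eqref{eq:escondida}; moreover, since its factors $W^{\mathsf L},W^{\mathsf R}$ solve first order linear differential systems, the scalar functions $\det W^{\mathsf L}$, $\det W^{\mathsf R}$ solve scalar linear equations and hence do not vanish identically (they would otherwise contradict the regularity of $W$), so $W(z)$ is invertible for all $z\in\C$. These facts will be used freely.

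For i) $\Rightarrow$ ii) I would exploit that, because $h^{\mathsf L}$ and $h^{\mathsf R}$ have degree one, $\mathcal L^{\mathsf L}$ and $\mathcal L^{\mathsf R}$ preserve the degree filtration of $\C^{N\times N}[z]$: inspecting top coefficients, $\mathcal L^{\mathsf L}(P_n^{\mathsf L})$ has degree $\le n$ with leading coefficient $2nA^{\mathsf L}+\alpha^{\mathsf L}$, and $\mathcal L^{\mathsf R}(P_n^{\mathsf R})$ has degree $\le n$ with leading coefficient $2nA^{\mathsf R}+\alpha^{\mathsf R}$. For any $Q$ with $\deg Q<n$, adjointness i) and left orthogonality give $\langle\mathcal L^{\mathsf L}(P_n^{\mathsf L}),Q\rangle_W=\langle P_n^{\mathsf L},\mathcal L^{\mathsf R}(Q)\rangle_W=0$ since $\deg\mathcal L^{\mathsf R}(Q)<n$; expanding $\mathcal L^{\mathsf L}(P_n^{\mathsf L})$ in the left basis and pairing with $P_m^{\mathsf R}$ forces all components with index $<n$ to vanish, so $\mathcal L^{\mathsf L}(P_n^{\mathsf L})=\lambda_n^{\mathsf L}P_n^{\mathsf L}$ with $\lambda_n^{\mathsf L}=2nA^{\mathsf L}+\alpha^{\mathsf L}$; symmetrically (using right orthogonality) $\mathcal L^{\mathsf R}(P_n^{\mathsf R})=P_n^{\mathsf R}\lambda_n^{\mathsf R}$ with $\lambda_n^{\mathsf R}=2nA^{\mathsf R}+\alpha^{\mathsf R}$. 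Computing $\langle\mathcal L^{\mathsf L}(P_n^{\mathsf L}),P_n^{\mathsf R}\rangle_W$ once using $\mathcal L^{\mathsf L}(P_n^{\mathsf L})=\lambda_n^{\mathsf L}P_n^{\mathsf L}$ and once using i) followed by $\mathcal L^{\mathsf R}(P_n^{\mathsf R})=P_n^{\mathsf R}\lambda_n^{\mathsf R}$ yields $\lambda_n^{\mathsf L}C_n^{-1}=C_n^{-1}\lambda_n^{\mathsf R}$.

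For ii) $\Rightarrow$ iii) I would set $F_n^{\mathsf L}(z):=\big(Q_n^{\mathsf L}\big)''(z)-2\big(Q_n^{\mathsf L}\big)'(z)h^{\mathsf R}(z)+Q_n^{\mathsf L}(z)\big(\alpha^{\mathsf R}-2A^{\mathsf R}\big)-\lambda_n^{\mathsf L}Q_n^{\mathsf L}(z)$, which is holomorphic on $\C\setminus\gamma$ and, by~\eqref{eq:secondkindlaurent}, is $\operatorname{O}(z^{-n-1})$ at infinity. Using the jump relation~\eqref{eq:inverseformula}, $(Q_n^{\mathsf L})_+-(Q_n^{\mathsf L})_-=P_n^{\mathsf L}W$, the jump of $F_n^{\mathsf L}$ equals $(P_n^{\mathsf L}W)''-2(P_n^{\mathsf L}W)'h^{\mathsf R}+P_n^{\mathsf L}W(\alpha^{\mathsf R}-2A^{\mathsf R})-\lambda_n^{\mathsf L}P_n^{\mathsf L}W$; expanding by Leibniz, substituting $W'=h^{\mathsf L}W+Wh^{\mathsf R}$ and $W''=\mathcal M(h^{\mathsf L})W+W\mathcal M(h^{\mathsf R})+2h^{\mathsf L}Wh^{\mathsf R}$ (as in the proof of Lemma~\ref{prop:eqrho}), using $\mathcal L^{\mathsf L}(P_n^{\mathsf L})=\lambda_n^{\mathsf L}P_n^{\mathsf L}$ to eliminate $(P_n^{\mathsf L})''W+2(P_n^{\mathsf L})'h^{\mathsf L}W$, and then the constraint~\eqref{eq:escondida} to rewrite $P_n^{\mathsf L}(\mathcal M(h^{\mathsf L})-\alpha^{\mathsf L})W=P_n^{\mathsf L}W(\mathcal M(h^{\mathsf R})-\alpha^{\mathsf R})$, the jump collapses to $2P_n^{\mathsf L}W\big(\mathcal M(h^{\mathsf R})-(h^{\mathsf R})^2-A^{\mathsf R}\big)=2P_n^{\mathsf L}W\big((h^{\mathsf R})'-A^{\mathsf R}\big)=0$, the last equality being exactly where degree one enters. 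Hence $F_n^{\mathsf L}$ continues to an entire function vanishing at infinity, so $F_n^{\mathsf L}\equiv0$, i.e.~\eqref{eq:eigensystemql}; \eqref{eq:eigensystemqr} follows by the mirror argument from~\eqref{eq:inverseformulaR} and~\eqref{eq:secondkindlaurentR}.

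Finally, for iii) $\Rightarrow$ i) I would reverse this last computation: the jump of the identically vanishing analogue of $F_n^{\mathsf L}$ built from~\eqref{eq:eigensystemql} is zero on $\gamma$, which says $\big(\mathcal L^{\mathsf L}(P_n^{\mathsf L})-\lambda_n^{\mathsf L}P_n^{\mathsf L}\big)W=0$; since $W(z)$ is invertible and the factors on the left are polynomial, $\mathcal L^{\mathsf L}(P_n^{\mathsf L})=\lambda_n^{\mathsf L}P_n^{\mathsf L}$, and from~\eqref{eq:eigensystemqr} likewise $\mathcal L^{\mathsf R}(P_n^{\mathsf R})=P_n^{\mathsf R}\lambda_n^{\mathsf R}$; the coefficient of $z^{-n-1}$ in~\eqref{eq:eigensystemqr}, read off via~\eqref{eq:secondkindlaurentR}, gives $\lambda_n^{\mathsf L}C_n^{-1}=C_n^{-1}\lambda_n^{\mathsf R}$. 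Expanding arbitrary matrix polynomials in the biorthogonal bases and invoking this relation together with biorthogonality then gives $\langle\mathcal L^{\mathsf L}(P),\tilde P\rangle_W=\langle P,\mathcal L^{\mathsf R}(\tilde P)\rangle_W$, i.e. $\mathcal L^{\mathsf R}=(\mathcal L^{\mathsf L})^*$. The main obstacle is the bookkeeping in the jump cancellation of ii) $\Rightarrow$ iii): one has to track the non-commuting factors carefully and, in particular, use that $\mathcal M(-h^{\mathsf R})=\mathcal M(h^{\mathsf R})-2A^{\mathsf R}$ (which is what forces the shift $\alpha^{\mathsf R}-2A^{\mathsf R}$ in~\eqref{eq:eigensystemql}) together with~\eqref{eq:escondida} to trade a left multiplication for a right multiplication across $W$.
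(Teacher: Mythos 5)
Your proposal is correct, and the endpoints of the argument (i)~$\Leftrightarrow$~ii) via biorthogonality, degree preservation and leading coefficients; the identification $\lambda_n^{\mathsf L}=2nA^{\mathsf L}+\alpha^{\mathsf L}$, $\lambda_n^{\mathsf R}=2nA^{\mathsf R}+\alpha^{\mathsf R}$) coincide with the paper's. Where you genuinely diverge is in the passage between ii) and iii). The paper works at the level of the Riemann--Hilbert output: it starts from the second order equations~\eqref{eq:secondorderpl}--\eqref{eq:secondorderqr}, multiplies the polynomial identity by $W(z')/(z-z')$, and pushes the Cauchy transform through each term using a battery of auxiliary integral identities (e.g.\ $\int_\gamma \frac{\mathcal M(h^{\mathsf L})(z')}{z'-z}P_n^{\mathsf L}W\,\d z'=\mathcal M(h^{\mathsf L})(z)Q_n^{\mathsf L}(z)$ for $n\ge 2$, and~\eqref{eq:igualdadepolinomial}), with the reverse direction done by differentiating the Cauchy integral representation of $Q_n^{\mathsf L}$ and integrating by parts. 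You instead apply a Liouville-type argument to the residual $F_n^{\mathsf L}$: Plemelj gives its jump as $(\mathcal L^{\mathsf L}(P_n^{\mathsf L})-\lambda_n^{\mathsf L}P_n^{\mathsf L})W$ after the Pearson equation and the constraint~\eqref{eq:escondida} are used to trade left for right multiplications across $W$ (your cancellation $\mathcal M(h^{\mathsf R})-(h^{\mathsf R})^2-A^{\mathsf R}=(h^{\mathsf R})'-A^{\mathsf R}=0$ checks out and is exactly where degree one enters), and the decay $O(z^{-n-1})$ then forces $F_n^{\mathsf L}\equiv 0$; the converse is the same computation read backwards plus invertibility of $W$. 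Your route is more economical (it bypasses the structure matrices and~\eqref{eq:secondorderql} entirely) and makes the two directions manifestly symmetric, at the price of two technical points the paper's integral route avoids: you need $W$ to continue holomorphically across $\gamma$ (true here because $h^{\mathsf L},h^{\mathsf R}$ are polynomial, so the Pearson equation has entire solutions) and you need the behaviour of $F_n^{\mathsf L}$ at the endpoints of $\gamma$ and at infinity along $\gamma$ to justify Morera/Liouville --- a standard but unstated step. Also, your invertibility claim should be sharpened: by the Jacobi formula $\det W^{\mathsf L}$ is a constant times $\Exp{\int\operatorname{tr}h^{\mathsf L}}$, hence nowhere zero once it is not identically zero, which is what you actually use. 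None of these affect correctness.
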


\begin{proof}

 \textbf{ii) implies i).} If $n \not = m$
\begin{gather*}
\langle {\mathcal L}^{\mathsf L} ( P_n^{\mathsf L}(z) ) \, , \, P_m^{\mathsf R}(z) \rangle_ W = 
\lambda_n^{\mathsf L} \langle P_n^{\mathsf L}(z) \, , \, P_m^{\mathsf R}(z) \rangle_ W 
= 0_N \, , \\
\langle P_n^{\mathsf L}(z) \, , \, {\mathcal L}^{\mathsf R} ( P_m^{\mathsf R}(z) ) \rangle_ W
= \langle P_n^{\mathsf L}(z) \, , \, P_m^{\mathsf R}(z) \rangle_ W \lambda_m^{\mathsf R}
= 0_N \, ;
\end{gather*}
and for $n =m$
\begin{align*}
\langle {\mathcal L}^{\mathsf L} ( P_n^{\mathsf L}(z) ) \, , \, P_n^{\mathsf R}(z) \rangle_ W & = \lambda_n^{\mathsf L} C_n^{-1} \, , &
\langle P_n^{\mathsf L}(z) \, , \, {\mathcal L}^{\mathsf R} ( P_n^{\mathsf R}(z) ) \rangle_ W & = C_n^{-1} \lambda_n^{\mathsf R} \, , & n \in \mathbb N,
\end{align*}
which implies that $\langle {\mathcal L}^{\mathsf L} ( P_n^{\mathsf L}(z) ) \, , \, P_m^{\mathsf R}(z) \rangle_ W = \langle P_n^{\mathsf L}(z) \, , \, {\mathcal L}^{\mathsf R} ( P_m^{\mathsf R}(z) ) \rangle_ W $, $n , m \in \mathbb N$.

\textbf{i) implies ii).} Let us note that the space of matrix polynomials of a given degree is invariant under the action of the operators $ \mathcal L^{\mathsf L} $ and $\mathcal L^{\mathsf R} $;
hence
\begin{align*}
{\mathcal L}^{\mathsf L }(P_n^{\mathsf L} )= \sum_{k=0}^n {\lambda}_{n,k}^{\mathsf L }P_k^{\mathsf L} .
\end{align*}
Now, taking into account the biorthogonality of the sequences $P_n^{\mathsf L} $ and $ P_n^{\mathsf R} $ with respect to~$ W$ and using that the operators ${\mathcal L}^{\mathsf L } $ and $ {\mathcal L}^{\mathsf R }$ are adjoint operators we have
\begin{align*}
{\lambda}_{n,k}^{\mathsf L } C_k^{-1} &= \langle {\mathcal L}^{\mathsf L }(P_n^{\mathsf L} ), P_k^{\mathsf R} \rangle_{ W} = 
\langle P_n^{\mathsf L} , {\mathcal L}^{\mathsf R }(P_k^{\mathsf R}) \rangle_{ W} = C_n^{-1} {\lambda}_{n,k}^{\mathsf R }\delta_{n,k}, &
n , m &\in \N ,
\end{align*}
so it holds that
$
{\mathcal L}^{\mathsf L }(P_n^{\mathsf L} )= {\lambda}_{n}^{\mathsf L }P_k^{\mathsf L} 
$
and also 
$
{\mathcal L}^{\mathsf R }(P_n^{\mathsf R} )= {\lambda}_{n}^{\mathsf R }P_k^{\mathsf R} 
$
where $ {\lambda}_{n}^{\mathsf L } C_n^{-1} = C_n^{-1} {\lambda}_{n}^{\mathsf R} $.

\textbf{ ii) implies iii) }We return back to equations~\eqref{eq:secondorderpl} and~\eqref{eq:eigensystems} and see that
\begin{gather*}
\big[ {\mathcal M }(h^{\mathsf L} (z') ) , P_n^{\mathsf L} (z') \big] 
+ \mathsf H^{\mathsf L}_{1,1,n} (z') P_n^{\mathsf L} (z') 
- \mathsf H^{\mathsf L}_{1,2,n} (z') P_{n-1}^{\mathsf L} (z') 
 = - P_n^{\mathsf L} (z') \, \alpha^{\mathsf L} + \lambda_n^{\mathsf L} \, P_n^{\mathsf L} (z') \, .
\end{gather*}
Now, multiplying this equation on the right by $ W (z') / (z-z')$ and integrating along $\gamma$, taking into account the boundary conditions, we get
\begin{multline*}
{\mathcal M }(h^{\mathsf L} (z) ) \, Q_n^{\mathsf L} (z) - Q_n^{\mathsf L} (z) \, {\mathcal M }( - h^{\mathsf R} (z) ) 
+ \mathsf H^{\mathsf L}_{1,1,n} (z) Q_n^{\mathsf L} (z) 
- \mathsf H^{\mathsf L}_{1,2,n} (z) Q_{n-1}^{\mathsf L} (z)
 \\ = Q_n^{\mathsf L} (z) \, (2 A^{\mathsf R} - \alpha^{\mathsf R} ) + \lambda_n^{\mathsf L} \, Q_n^{\mathsf L} (z) \, .
\end{multline*}
Now, from~\eqref{eq:secondorderql} we get~\eqref{eq:eigensystemql}.
We have proved that if $\big\{ P_n^{\mathsf L} \big\}_{n\in\N}$ satisfies a second order linear differential equation the associated functions of second kind also does.

 We have that
\begin{align*}
\int_\gamma \frac{\mathcal M (h^{\mathsf L}) (z') } {z'-z} P^{\mathsf L}_n (z') W (z') \d z'
 & = \int_\gamma \frac{(A^{\mathsf L})^2 (z')^2 + \{ A^{\mathsf L} , B^{\mathsf L} \} z' + A^{\mathsf L} + (B^{\mathsf L})^2 } {z'-z} P^{\mathsf L}_n (z') W (z') \d z',
\end{align*}
with the anticommutator notation $\{A,B\}=AB+BA$.
Now, as
\begin{align*}
\int_\gamma \frac{ (z')^2} {z'-z} P^{\mathsf L}_n (z') W (z') \d z'
 & = 
 \int_\gamma \frac{ (z')^2 - z^2} {z'-z} P^{\mathsf L}_n (z') W (z') \d z'
+ z^2 Q^{\mathsf L}_n (z) 
 \\
 & = 
 \int_\gamma ({z' + z}) P^{\mathsf L}_n (z') W (z') \d z'
+ z^2 Q^{\mathsf L}_n (z) \, , 
\end{align*}
and, in the same way,
\begin{align*}
\int_\gamma \frac{ z' } {z'-z} P^{\mathsf L}_n (z') W (z') \d z'
 & = 
\int_\gamma \frac{ z' -z } {z'-z} P^{\mathsf L}_n (z') W (z') \d z'
+ z Q^{\mathsf L}_n (z) 
 \\
& = 
\int_\gamma P^{\mathsf L}_n (z') W (z') \d z'
+ z Q^{\mathsf L}_n (z) 
 \, ,
\end{align*}
we finally obtain
\begin{align*}
\int_\gamma \frac{\mathcal M (h^{\mathsf L}) (z') } {z'-z} P^{\mathsf L}_n (z') W (z') \d z'& = \mathcal M (h^{\mathsf L}) \, Q_n^{\mathsf L} (z), & n \geq 2,
\end{align*}
where we have used the orthogonality conditions for $\big\{ P_n^{\mathsf L} \big\}_{n\in\N}$. 
We also have
\begin{align*}
\int_\gamma P^{\mathsf L}_n (z') \frac{\mathcal M (h^{\mathsf L}) (z') - \alpha^{\mathsf L} } {z'-z} W (z') \d z'
 & = 
\int_\gamma 
P^{\mathsf L}_n (z') W (z') \frac{\mathcal M (h^{\mathsf R} ) (z') - \alpha^{\mathsf R} }{z'-z} \d z'%
 \\ 
 & = 
Q_n^{\mathsf L} (z) \, (\mathcal M (h^{\mathsf R}) (z) - \alpha^{\mathsf R})
, & n \geq 2 .
\end{align*}
Using the same ideas we prove that
\begin{align} \label{eq:igualdadepolinomial}
\int_\gamma \frac{\mathsf H^{\mathsf L}_{1,j,n} (z')}{z'-z} P_{n-j+1}^{\mathsf L} (z') W (z') \d z'& = \mathsf H^{\mathsf L}_{1,j,n} (z) Q_{n-j+1}^{\mathsf L} (z),& n &\geq 1, \, j = 1,2 \, .
\end{align}
In fact, by definition~\eqref{eq:Miura_bispectrality_L} we know that the matrix polynomials $\mathsf H^{\mathsf L}_{1,j,n} (z')$ are of degree at most one, i.e.
\begin{gather*}
\mathsf H^{\mathsf L}_{1,j,n} (z') = \mathsf H^{\mathsf L,0}_{1,j,n} z' + \mathsf H^{\mathsf L,1}_{1,j,n} \, , \ \ \
\mathsf H^{\mathsf L,0}_{1,j,n} , \mathsf H^{\mathsf L,1}_{1,j,n} \in \mathbb C^{N \times N} \, .
\end{gather*}
Summing and subtracting in~\eqref{eq:igualdadepolinomial} $\mathsf H^{\mathsf L}_{1,j,n} (z)$ we get in the left hand side
\begin{multline*}
\int_\gamma \frac{\mathsf H^{\mathsf L}_{1,j,n} (z')}{z'-z} P_{n-j+1}^{\mathsf L} (z') W (z') \d z'\\
 = 
\int_\gamma \frac{\mathsf H^{\mathsf L}_{1,j,n} (z') - \mathsf H^{\mathsf L}_{1,j,n} (z)}{z'-z} P_{n-j+1}^{\mathsf L} (z') W (z') \d z'
+ \mathsf H^{\mathsf L}_{1,j,n} (z) Q_{n-j+1}^{\mathsf L} (z) \, ; 
\end{multline*}
hence, as
\begin{gather*}
\frac{\mathsf H^{\mathsf L}_{1,j,n} (z') - \mathsf H^{\mathsf L}_{1,j,n} (z)}{z'-z}
= \mathsf H^{\mathsf L,0}_{1,j,n} \, ,
\end{gather*}
we arrive to
\begin{gather*}
\int_\gamma \frac{\mathsf H^{\mathsf L}_{1,j,n} (z')}{z'-z} P_{n-j+1}^{\mathsf L} (z') W (z') \d z'
 = 
\mathsf H^{\mathsf L,0}_{1,j,n}
\int_\gamma P_{n-j+1}^{\mathsf L} (z') W (z') \d z'
+ \mathsf H^{\mathsf L}_{1,j,n} (z) Q_{n-j+1}^{\mathsf L} (z) \, ,
\end{gather*}
and by the orthogonality of $\big\{ P_{n-j+1}^{\mathsf L} (z) \big\}_{n \in \N} $ with respect to $ W(z)$ we get for $j =1,2$, and for all $n = 1, 2, \ldots $, that~\eqref{eq:igualdadepolinomial} holds true.

From~\eqref{eq:secondorderpr} and taking into account that $\mathcal L^{\mathsf R} (P_n^{\mathsf R}) = P_n^{\mathsf R} \, \lambda_n^{\mathsf R} $ we get
\begin{gather*} 
\big[ P_n^{\mathsf R} (z') , \mathcal M(h^{\mathsf R}) (z') \big]
+ P_n^{\mathsf R} (z') \mathsf H^{\mathsf R}_{1,1,n} (z')
- P_{n-1}^{\mathsf R} (z') \mathsf H^{\mathsf R}_{2,1,n} (z')
 = - \alpha^{\mathsf R} \, P_n^{\mathsf R} (z') + P_n^{\mathsf R} (z') \, \lambda_n^{\mathsf R} \, .
\end{gather*}
Now, multiplying this equation on the left by $ W (z') / (z-z')$ and integrate (using the boundary conditions) over $\gamma$, we get
\begin{multline*} 
Q_n^{\mathsf R} (z) \, \mathcal M(h^{\mathsf R} ) (z)
- \mathcal M(-h^{\mathsf L} ) (z) Q_n^{\mathsf R} (z)
+ Q_n^{\mathsf R} (z) \mathsf H^{\mathsf R}_{1,1,n} (z)
- Q_{n-1}^{\mathsf R} (z) \mathsf H^{\mathsf R}_{2,1,n} (z)
 \\ 
 = (2 A^{\mathsf L} - \alpha^{\mathsf L}) \, Q_n^{\mathsf R} + Q_n^{\mathsf R} \, \lambda_n^{\mathsf R} \, ,
\end{multline*}
and so, from~\eqref{eq:secondorderqr} we arrive to~\eqref{eq:eigensystemqr}.

\textbf{ iii) implies ii)}. 
Taking derivatives with respect to $z$ we get, after integration by parts and using the boundary conditions
\begin{align*}
 (Q_n^\mathsf L)^{\prime} (z)& = \int_\gamma \frac{P_n^\mathsf L (z') W (z')}{(z' -z)^2} \d z'
 \, , \\
(Q_n^\mathsf L)^{\prime\prime} (z) &= 2 \int_\gamma \frac{P_n^\mathsf L (z') W (z') }{(z' -z)^3} \d z'= \int_\gamma \frac{(P_n^\mathsf L (z') W (z'))^{\prime\prime}}{z' -z} \d z'.
\end{align*}
Moreover,
\begin{align*}
-2 (Q_n^\mathsf L)^{\prime} (z) h^{\mathsf R}(z) 
 & = 
2 \int_\gamma P_n^\mathsf L (z') W (z') \frac{h^{\mathsf R} (z') - h^{\mathsf R} (z)}{(z' -z)^2} \d z'
- 2 \int_\gamma P_n^\mathsf L (z') W (z') 
\frac{ h^{\mathsf R} (z')}{(z' -z)^2} \d z'\\
 & = 2 Q_n^\mathsf L (z) A^{\mathsf R} - 2 \int_\gamma \frac{(P_n^\mathsf L (z') W (z') h^{\mathsf R} (z') )^{\prime}}{z' -z} \d z'.
\end{align*}
Now, we plug all this information into~\eqref{eq:eigensystemql} and deduce that
\begin{align*}
\int_\gamma \frac{(P_n^{\mathsf L})^{\prime\prime} 
 W
+ 2 (P_n^{\mathsf L})^{\prime} 
( W^{\prime} - W h^{\mathsf R} ) + P_n^{\mathsf L} 
( W^{\prime\prime} - 2 ( W h^{\mathsf R})^{\prime}) + W \alpha^{\mathsf R}) }{z'-z} \, \d z'
 = \lambda_n^\mathsf L \int_\gamma \frac{ P_n^{\mathsf L}
 W 
}{z'-z} 
\d z'\, ;
\end{align*}
by the hypothesis over $ W$ we get
\begin{gather*}
\int_\gamma \frac{(P_n^{\mathsf L})^{\prime\prime} (z')+ 2 (P_n^{\mathsf L})^{\prime} (z') h^{\mathsf L} (z') + P_n^{\mathsf L} \alpha^\mathsf L - \lambda_n^\mathsf L P_n^{\mathsf L} }{z'-z} \, W (z') \, \d z'
= 0_N.
\end{gather*}
Hence, we get that $\big\{ P_n^\mathsf L \big\}_{n \in \N}$ satisfies~\eqref{eq:eigensystems}.
Using analogous arguments it can be proven that the equation~\eqref{eq:eigensystemqr} for $\big\{ Q_n^{\mathsf R} \big\}_{n\in\N} $ implies that $\big\{ P_n^\mathsf R \big\}_{n\in\N}$ satisfies~\eqref{eq:eigensystems}.
\end{proof}


The interpretation in terms of adjoint operators, inherits from the Riemann--Hilbert problem the characterization for the $\big\{Q_n^\mathsf L \big\}_{n \in \N}$ and 
$\big\{Q_n^\mathsf R \big\}_{n \in \N}$. Moreover, Theorems~\ref{teo:novo} we see that $W$ in Theorem~\ref{teo:nuevo} can be taken as a solution of a Pearson Sylvester differential equation like~\eqref{eq:Pearson} and satisfies~\eqref{eq:escondida}.

\begin{rem} For the symmetric or Hermitian reductions we take
$ h(z) = A z + B, $
 with $A$ definite negative, and $ W(z)$ a matrix of weights a solution of~\eqref{eq:second_order_reduced} subject to the boundary conditions~\eqref{eq:boundaryc_reduced}.
 Then, the following conditions are equivalent:
 
 \hangindent=.9cm \hangafter=1
 {\noindent}$\phantom{ol}${\rm i)} \label{1} 
Equation~\eqref{eq:adjoint_reduced} is satisfied.
 
 \hangindent=.9cm \hangafter=1
 {\noindent}$\phantom{ol}${\rm ii)} \label{2} 
 The matrix orthogonal polynomials with respect to $ W(z)$
 are eigenfunctions of ${\mathcal L}$.
 
\hangindent=.9cm \hangafter=1
{\noindent}$\phantom{ol}${\rm iii)} \label{3} 
The functions of second kind, $ \big\{ Q_n(z) \big\}_{n \in \mathbb N}$, associated with the matrix orthogonal polynomials, $\big\{ P_n (z) \big\}_{n \in \mathbb N}$ fulfill the second order differential~equations,
 \begin{align*}
 \big(Q_n\big)^{\prime \prime}(z) 
 - 2\big(Q_n\big)^{ \prime}(z) \, (h (z))^\top
 + Q_n (z) \, (\alpha^\top - 2 A^\top ) 
 &= \lambda_n\, Q_n (z), &\text{symmetric},\\
  \big(Q_n\big)^{\prime \prime}(z) 
 - 2\big(Q_n\big)^{ \prime}(z) \, (h (\bar z))^\dagger
 + Q_n (z) \, (\alpha^\dagger - 2 A^\dagger ) 
 &= \lambda_n\, Q_n (z), &\text{Hermitian}.
 \end{align*}
 
\end{rem}

The equivalences, described in the previous remark, excluding the one for the second kind functions (which is new), coincide with those of~\cite{duran2004}.
Therefore, these results could understood as an extension of those by Durán and Grünbaum to the non Hermitian orthogonality scenario.

\section{Nonlinear 
difference equations for the recursion coefficients} \label{sec:6}

Using the Riemann--Hilbert approach we will derive in this section nonlinear matrix difference equations fullfilled by the recursion coefficients. We will consider three different possibilities for the Pearson equations satisfied by the matrix of weights.

\subsection{Nonlinear difference equations for Hermite matrix polynomials}

We now explore the most simplest case when
$\max(h^{\mathsf L}_n (z) ,h^{\mathsf R}_n (z) )=1$ in full generality.
We take 
\begin{align*}
h^{\mathsf L}(z)&= A^{\mathsf L} z + B^{\mathsf L}, &
h^{\mathsf R}(z)=A^{\mathsf R} z +B^{\mathsf R},
\end{align*}
for arbitrary matrices $A^{\mathsf L}, B^{\mathsf L}, A^{\mathsf R}, B^{\mathsf R}\in\C^{N\times N}$, with $A^{\mathsf L}, A^{\mathsf R}$ definite negative matrices.
Thus, the matrix of weights $ W(z)$ is a solution of the following Pearson equation (a Sylvester linear differential equation)
\begin{align*}
 W^{\prime} (z)= (A^{\mathsf L} z + B^{\mathsf L}) W(z) + W(z) (A^{\mathsf R} z + B^{\mathsf R}).
\end{align*}
For simplicity we take $\gamma=\R$.
Hence, the structure matrices have, cf.~\eqref{eq:MSP} and~\eqref{eq:MSPR}, the following form
\begin{align}\label{eq:structure_Hermite}
M^\mathsf L_n (z) & = {\mathcal A}^\mathsf L z + \mathcal K_n^\mathsf L, &
\mathcal A^\mathsf L & 
 = 
\left[ \begin{smallmatrix}
A^{\mathsf L} & 0_N \\
0_N & -A^{\mathsf R} \end{smallmatrix} \right]
, &
\mathcal K_n^\mathsf L 
& 
 =
\left[ \begin{smallmatrix}
B^{\mathsf L} + \big[ p_{\mathsf L , n}^1 , A^{\mathsf L} \big] 
 & C_n^{-1} A^{\mathsf R} + A^{\mathsf L} C_n^{-1} \\
- C_{n-1} A^{\mathsf L} - A^{\mathsf R} C_{n-1} 
 & -B^{\mathsf R} -\big[ q_{\mathsf L , n-1}^1 , A^{\mathsf R} \big]
\end{smallmatrix} \right] ,
\end{align}
%
The Silvester differential system~\eqref{eq:estrutura1} for the left fundamental matrix is
\begin{align*}
\big(Y^\mathsf L_n(z)\big)^{\prime}+ 
\left[ Y^\mathsf L_n (z) , 
\left[\begin{smallmatrix}
A^{\mathsf L} z + B^{\mathsf L} & 0_N \\
0_N & -A^{\mathsf R} z - B^{\mathsf R}
\end{smallmatrix} \right] \right]
 & =
\left[ \begin{smallmatrix}
\big[ p_{\mathsf L , n}^1 , A^{\mathsf L} \big] & C_n^{-1} A^{\mathsf R} + A^{\mathsf L} C_n^{-1} \\
-C_{n-1} A^{\mathsf L} - A^{\mathsf R} C_{n-1} & -\big[ q_{\mathsf L , n-1}^1 , A^{\mathsf R} \big]
\end{smallmatrix} \right] \, Y_n (z), & n \in \mathbb N ,
\end{align*}
that is, for all $n \in \mathbb N$,
\begin{gather}
\label{eq:Hermite1}
(P^{\mathsf L}_n)^{\prime} + \Big[ P^{\mathsf L}_n , A^{\mathsf L} z + B^{\mathsf L} \big]
=
\big[ p_{\mathsf L,n}^1 , A^{\mathsf L} \big] P^{\mathsf L}_n - \big( C_n^{-1} A^{\mathsf R} +A^{\mathsf L} C_n^{-1} \big) C_{n-1} P^{\mathsf L}_{n-1} ,\\ \label{eq:Hermite2}
C_{n-1} (Q^{\mathsf L}_{n-1})^{\prime} - \Big[ C_{n-1} Q^{\mathsf L}_{n-1} , A^{\mathsf R} z + B^{\mathsf R} \big]
=
\big( C_{n-1} A^{\mathsf L} +A^{\mathsf R} C_{n-1} \big) Q^{\mathsf L}_n - \big[ q_{n-1}^1 , A^{\mathsf R} \big] C_{n-1} Q^{\mathsf L}_{n-1},
\\
\label{eq:Hermite3}
\begin{multlined}[t][.85\textwidth]
C_{n-1} (P^{\mathsf L}_{n-1})^{\prime} + C_{n-1} P_{n-1} \big( A^{\mathsf L} z + B^{\mathsf L} \big) +\big( A^{\mathsf R} z + B^{\mathsf R} \big) \, C_{n-1} P_{n-1}^{\mathsf L}
\\ =
\big( C_{n-1} A^{\mathsf L} +A^{\mathsf R} C_{n-1} \big) P^{\mathsf L}_{n} -
\big[ q_{{\mathsf L},n-1}^1 , A^{\mathsf R} \big] C_{n-1} P^{\mathsf L}_{n-1}, 
\end{multlined}
\\ \label{eq:Hermite4}
(Q^{\mathsf L}_{n})^{\prime} - Q^{\mathsf L}_{n} \big( A^{\mathsf R} z + B^{\mathsf R} \big) - \big( A^{\mathsf L} z + B^{\mathsf L} \big) Q^{\mathsf L}_{n} =
\big[ p_{n}^1 , A^{\mathsf L} \big] Q^{\mathsf L}_{n} -
\big( C_n^{-1} A^{\mathsf R} +A^{\mathsf L} C_n^{-1} \big) C_{n-1} Q^{\mathsf L}_{n-1} .
\end{gather}
Taking the $(n-1)$-th $z$ power of the~\eqref{eq:Hermite1}, the $-n$-th of~\eqref{eq:Hermite2}, the $-(n-1)$-th of~\eqref{eq:Hermite3} and the $-(n+1)$-th of~\eqref{eq:Hermite4} we get, for all $n \in \mathbb N$,
\begin{align*} 
n I_N + \big[ p_{\mathsf L,n}^1 , B^{\mathsf L} \big] + \big[ p_{\mathsf L,n}^2 , A^{\mathsf L} \big]
&=
\big[ p_{\mathsf L,n}^1 , A^{\mathsf L} \big] \, p_n^1 - \big( C_n^{-1} A^{\mathsf R} + A^{\mathsf L} C_n^{-1} \big) C_{n-1} ,
 \\
n I_N + \big[ q_{n-1}^1 , B^{\mathsf R} \big] + \big[ q_{\mathsf L,n-1}^2 , A^{\mathsf R} \big]
&=-
\big( C_{n-1} A^{\mathsf L} +A^{\mathsf R} C_{n-1} \big) C_{n}^{-1}
+\big[ q_{\mathsf L,n-1}^1 , A^{\mathsf R} \big] q_{\mathsf L,n-1}^1 ,
 \\
C_{n-1} B^{\mathsf L} +B^{\mathsf R} C_{n-1}
+ C_{n-1} \big[ p_{\mathsf L,n-1}^1 , A^{\mathsf L} \big]
&=
- \big( C_{n-1} A^{\mathsf L} +A^{\mathsf R} C_{n-1} \big) \beta^\mathsf L_{n-1}
-
\big[ q_{\mathsf L,n-1}^1 , A^{\mathsf R} \big] C_{n-1},
 \\
B^{\mathsf R} C_{n} +C_{n} B^{\mathsf L}
+ \big[ q_{\mathsf L,n}^1 , A^{\mathsf R} \big] C_{n}
&=
-C_{n} \big[ p_{\mathsf L,n}^1 , A^{\mathsf L} \big]
- \big( A^{\mathsf R} C_{n} + C_{n} A^{\mathsf L} \big) \beta^\mathsf L_{n}.
\end{align*}
After some cleaning we reckon that the system is, for all $n \in \mathbb N$, equivalent to
\begin{gather*}
\begin{cases}
\begin{multlined}
I- \Bigg[ \beta^\mathsf L_n , B^{\mathsf L} - \Big[ \sum_{k=0}^{n-1}\beta^\mathsf L_k , A^{\mathsf L} \Big] + A^{\mathsf L} \beta^\mathsf L_n \Bigg]
\\ \phantom{olaolaolaolaolaola} =
C_{n}^{-1} C_{n-1} A^{\mathsf L}
- C_{n+1}^{-1} A^{\mathsf R} C_{n}
- A^{\mathsf L} C_{n+1}^{-1} C_{n}
+ C_n^{-1} A^{\mathsf R} \, C_{n-1},
\end{multlined}
\\
\begin{multlined}
C_{n-1} B^{\mathsf L} +B^{\mathsf R} C_{n-1}
- C_{n-1} \Big[ \sum_{k=0}^{n-2}\beta^\mathsf L_k , A^{\mathsf L} \Big]
\\ \phantom{olaolaolaolaol} =
- \big( C_{n-1} A^{\mathsf L} +A^{\mathsf R} C_{n-1} \big) \beta^\mathsf L_{n-1}
-
\Big[ \sum_{k=0}^{n-1}C_k\beta^\mathsf L_k(C_k)^{-1} , A^{\mathsf R} \Big] C_{n-1}.
\end{multlined}
\end{cases}
\end{gather*}

\subsection{A matrix extension of the alt-dPI}

We now discuss the case
$\max(h^{\mathsf L}_n (z) ,h^{\mathsf R}_n (z) )=2$, but we perform a strong simplification as
we take $h^\mathsf R=0_N$ and $ h^\mathsf L= \lambda + \mu z + \nu z^2 $, with $\lambda,\mu,\nu\in\C^{N\times N}$ arbitrary matrices but for $\nu$ being negative definite nonsingular matrix.
Thus, the Pearson equation will be
\begin{align}\label{eq:Pearson_altdPI}
 W^{\prime} (z) = (\lambda + \mu z + \nu z^2 ) W(z).
\end{align} 
We obviously drop off the notation that distinguish left and right polynomials and only describe the results for the left case. The integrals are taken along $\gamma$, a smooth curve for which we have a \emph{simple} Riemann--Hilbert problem as depicted in the following diagram:
\begin{center}
\begin{tikzpicture}
\begin{axis}[axis lines=middle,grid=both, width=0.69\textwidth
,axis equal,
 xmax=2,ymin=-4,
 xmin=-.1, ymax=4,
 xticklabel,yticklabel,disabledatascaling,xlabel=$x$,ylabel=$y$,every axis x label/.style={
  at={(ticklabel* cs:1)},
  anchor=south west,
 },
 every axis y label/.style={
  at={(ticklabel* cs:1.0)},
  anchor=south west,
 },grid style={line width=.1pt, draw=Bittersweet!10},
 major grid style={line width=.2pt,draw=Bittersweet!50},
 minor tick num=4,
axis line style={latex'-latex'},Bittersweet] 
\node[anchor = north east,Bittersweet] at (axis cs: 5,4) {$\mathbb C$} ;
\addplot[NavyBlue,samples=80, ultra thick,domain=-2:2,decoration={
  markings,
  mark=between positions .4 and .5 step 10em with {\arrow [scale=1.1]{latex'}}
 }, postaction=decorate]({cosh(x)},{sqrt(3)*sinh(x)}) ;%
 \addplot[NavyBlue,samples=80, thick,domain=0:3,dashed]({(x)},{sqrt(3)*(x)}) ;
 \addplot[NavyBlue,samples=80, thick,domain=0:3,dashed]({(x)},{-sqrt(3)*(x)}) ;
 \draw[->,>=latex',semithick] (0:1cm) arc (0:60:1cm) node[pos=0.5,sloped,above]{$\frac{\pi}{3}$};
\draw[thick,black]  (axis cs:2,3) node[right ] {$\gamma$} ;
\end{axis}
\draw (5.5,-0.5) node
{\begin{minipage}{.85\textwidth} 
\begin{center}
{\bfseries \small Branch of hiperbola $3x^2-y^2=3$$\phantom{olaolaol}$}
\end{center}
\end{minipage}
};
\end{tikzpicture}
\end{center}
The structure matrix, cf.~\eqref{eq:MSP}, is a second order polynomial $M_n (z) = M_n^0 z^2 + M_n^1 z + M_n^2$~with
\begin{align*}
M_n^0 &= 
\begin{bmatrix} 
\nu & 0_N \\ 
0_N & 0_N
\end{bmatrix}, 
\phantom{olaolaola} 
M_n^1 
 = 
\begin{bmatrix} 
\mu - \big[ \nu , p_n^1 \big] & \nu C_n^{-1} \\[.05cm] 
- C_{n-1} \nu & \underline{0} 
\end{bmatrix}, \\
M_n^2 &= 
\begin{bmatrix} 
\lambda - \big[ \beta , p_n^1 \big] - \big[ \nu , p_n^2 \big] + \nu \big( p_n^1 \big)^2 - p_n^1 \nu \, p_n^1 + \nu C_n^{-1} C_{n-1} & \big( \mu - \big[ \nu , p_n^1 \big] + \gamma \beta_n \big) C_n^{-1} \\[.05cm] 
- C_{n-1} \, \big( \mu + p_{n-1}^1 \nu - \nu p_{n}^1 \big) & 
- C_{n-1} \nu \, C_n^{-1}
\end{bmatrix} .
\end{align*}

\begin{pro}[Matrix alt-dPI system]
The recursion coefficients $\beta_n,\gamma_n$ of the matrix orthogonal polynomials with matrix of weights a solution of the Pearson equation~\eqref{eq:Pearson_altdPI} are subject to the following system of equation, for all $n \in \mathbb N$,
\begin{gather} \label{eq:alt-dPI_1}
\Big( \mu + \Big[ \nu , \sum\limits_{k=0}^{n-1} \beta_k \Big] + 
\gamma (\beta_n + \beta_{n+1} ) \Big) \gamma_{n+1}=-(n+1) I, \\
\label{eq:alt-dPI_2}
\begin{multlined}[t][.85\textwidth]
\lambda + \gamma \big( \gamma_n + \gamma_{n+1} + \beta_n^2 \big) - \mu \beta_n + \Big[ \mu , \sum\limits_{k=0}^{n-1}\beta_k \Big] \big(I_N+ \beta_n\big) \\+ \Big[ \nu , \sum\limits_{m=1}^{n-1}\gamma_m-\sum\limits_{
0\leq k<m\leq n-1}\beta_m\beta_k \Big] 
+ \Big[ \nu , \sum\limits_{k=0}^{n-1}\beta_k \Big] \sum\limits_{k=0}^{n-1}\beta_k = 0_N .
\end{multlined}
\end{gather}
\end{pro}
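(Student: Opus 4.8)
The plan is to repeat, for the present data, the computation carried out above for the Hermite case: specialize the Sylvester differential system for the left fundamental matrix, scalarize it, and read off the recursion relations by matching powers of $z$. Since here $h^{\mathsf R}=0_N$ and $h^{\mathsf L}(z)=\lambda+\mu z+\nu z^2$, equation~\eqref{eq:estrutura1} becomes
\begin{gather*}
\big(Y_n(z)\big)^{\prime}=M_n(z)\,Y_n(z)-Y_n(z)\begin{bmatrix}\lambda+\mu z+\nu z^2 & 0_N\\ 0_N & 0_N\end{bmatrix},
\end{gather*}
with $M_n(z)=M_n^0z^2+M_n^1z+M_n^2$ the explicit quadratic polynomial recorded above. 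Writing out the four block equations --- the analogues of~\eqref{eq:Hermite1}--\eqref{eq:Hermite4} --- one obtains an inhomogeneous linear differential system for $P_n,P_{n-1}$ (with the term $P_nh^{\mathsf L}$) and, because $h^{\mathsf R}=0_N$, a homogeneous one for $Q_n,Q_{n-1}$. Into these I would insert the monic expansion $P_n(z)=Iz^n+p_n^1z^{n-1}+p_n^2z^{n-2}+\cdots$ and the expansion $Q_n(z)=-C_n^{-1}\big(Iz^{-n-1}+q_n^1z^{-n-2}+\cdots\big)$ of~\eqref{eq:secondkindlaurent}; equivalently one may simply match powers of $z$ in the purely algebraic zero-curvature identity~\eqref{eq:zerocurvature1}, whose ingredients $M_n(z)$ and $T_n(z)$ are already known polynomials in $z$, which avoids re-expanding $P_n$ and $Q_n$.

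Matching the appropriate coefficients --- as in the Hermite case, the relation carrying the affine term in $n$ (here $-(n+1)I$) comes from differentiating the leading monomial of one of the solutions --- produces raw identities involving $p_n^1$, $p_n^2$, $q_n^1$ and the ratios $C_n^{-1}C_{n-1}$, $C_nC_{n+1}^{-1}$. The last step is to eliminate these in favour of $\beta_n$ and $\gamma_n$ by means of the first integrals of the three term recurrence~\eqref{eq:ttrr}: $p_n^1=-\sum_{k=0}^{n-1}\beta_k$ from $p_n^1-p_{n+1}^1=\beta_n$; the expression of $p_n^2$ in terms of $\sum_k\gamma_k$ together with the double sum $\sum_{0\le k<m\le n-1}\beta_m\beta_k$, obtained by iterating $p_n^2-p_{n+1}^2=\beta_np_n^1+\gamma_n$; $q_{n-1}^1=\sum_{k=0}^{n-1}C_k\beta_kC_k^{-1}$ from $q_n^1-q_{n-1}^1=C_n\beta_nC_n^{-1}$; and $C_n^{-1}C_{n-1}=\gamma_n$. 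After these substitutions the commutators telescope into $[\nu,\sum_k\beta_k]$ and $[\mu,\sum_k\beta_k]$, and --- after an index shift in the first identity --- one recovers exactly~\eqref{eq:alt-dPI_1} and~\eqref{eq:alt-dPI_2}.

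I expect the main obstacle to be this elimination rather than anything structural. Because $h^{\mathsf L}$ is quadratic, the structure matrix $M_n$ already depends on the two coefficients $p_n^1,p_n^2$ --- against only $p_n^1$ in the Hermite case --- and matching the relevant power drags in the higher coefficients $p_n^3$ (or the analogous $q_n^2,q_n^3$), which must be removed by iterating the coupled recursions for $p_n^1,p_n^2,p_n^3$; one then has to check that all the resulting multiple sums genuinely collapse to the single and double sums appearing in~\eqref{eq:alt-dPI_1}--\eqref{eq:alt-dPI_2}, and this collapse is precisely what produces the rather intricate right hand side of~\eqref{eq:alt-dPI_2}. A secondary point, already present in the Hermite discussion, is to check that the overdetermined family of power-matched equations is consistent and reduces to exactly the two stated relations. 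The hypothesis that $\nu$ is nonsingular and negative definite plays no role in this algebra; it is needed only to ensure that a solution $W$ of~\eqref{eq:Pearson_altdPI} decays along the hyperbola branch $\gamma$, so that the Riemann--Hilbert problem of Theorem~\ref{teo:LRHP} is well posed and the integrals defining the moments and the second kind functions (together with the integrations by parts implicit in the structure equations) converge.
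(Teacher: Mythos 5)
Your proposal is correct and follows essentially the same route as the paper: specialize the Sylvester differential system to $h^{\mathsf R}=0_N$, insert the asymptotic expansions of the second kind functions, read off the $z^{-n-1}$ coefficients, and eliminate $p_n^1,p_n^2,q_{n-1}^1$ and $C_n^{-1}C_{n-1}$ in favour of $\beta_n,\gamma_n$ by telescoping the recursions, which is exactly how the single and double sums in~\eqref{eq:alt-dPI_2} arise. The only cosmetic difference is that the paper extracts both relations solely from the two $Q$-block equations (the homogeneous ones, since $h^{\mathsf R}=0_N$) rather than from all four block equations.
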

\begin{proof}
Given the asymptotics about $\infty $, 
\begin{align*}
-C_n Q_n (z) 
=I_Nz^{-n-1}+ q_n^1 z^{-n-2} + \cdots ,
\end{align*} 
we read the coefficient of $z^{-n-1}$ coming from 
\begin{gather*}
C_{n-1} Q_{n-1}^{\prime} (z) 
= - M_{2,1}^n (z) Q_n (z) + M_{2,2}^n (z) \, C_{n-1} Q_{n-1} (z) \, , 
\end{gather*}
with 
$ M_{2,1}^n = - C_{n-1}\nu z - C_{n-1} \big( \mu + p_{n-1}^1 \nu - \nu p_{n}^1 \big) \, , \ \ \
M_{2,2}^n = - C_{n-1} \nu C_n^{-1} \, $,
we get~\eqref{eq:alt-dPI_1}; and
from
\begin{gather*}
Q_n^{\prime}(z) 
= M_{1,1}^n \, Q_n (z) - M_{1,2}^n (z) \, C_{n-1} \, Q_{n-1} (z) \, , 
\end{gather*}
with 
\begin{gather*}
M_{1,1}^n = \nu z^2 + \big( \mu - \big[ \nu , p_n^1 \big] \big) z + \big( \lambda - \big[ \mu , p_n^1 \big] - \big[ \nu , p_n^2 \big] + \nu \big( p_n^1 \big)^2 
+ \nu C_n^{-1} C_{n-1} - p_n^1 \nu \, p_n^1 \big) \\
M_{1,2}^n = \nu C_n^{-1} z + \big( \mu - \big[ \nu , p_n^1 \big] + \nu \beta_n \big) C_n^{-1} \, ;
\end{gather*}
we deduce~\eqref{eq:alt-dPI_2} from the $ z^{-n-1} $-coefficient.
\end{proof}

Another form of writing this result is 
\begin{pro}[Matrix alt-dPI system]
Given matrix orthogonal polynomials with matrix of weights $ W(z)$ supported on $\gamma$, a solution of the Pearson equation~\eqref{eq:Pearson_altdPI}, the recursion coefficients $\gamma_n$ can be expressed directly in terms of the recursion coefficients $\beta_n$, for all $n \in \mathbb N$,
\begin{align*}
\gamma_{n+1}=-(n+1) \Big( \beta + \Big[ \gamma , \sum\limits_{k=0}^{n-1} \beta_k \Big] + \gamma (\beta_n + \beta_{n+1} ) \Big)^{-1}.
\end{align*}
The coefficients $\beta_n$ fulfill, for all $n \in \mathbb N$, the following non-Abelian alt-dPI,
 \begin{multline*}
\lambda + \nu \big( \gamma_n + \gamma_{n+1} + \beta_n^2 \big) - \mu \beta_n + \Big[ \beta , \sum\limits_{k=0}^{n-1}\beta_k \Big] \big(I_N+ \beta_n\big) \\+ \big[ \nu , \sum\limits_{m=1}^{n-1}\gamma_m-\sum\limits_{
 0\leq k<m\leq n-1}\beta_m\beta_k \big] 
+ \Big[ \nu , \sum\limits_{k=0}^{n-1}\beta_k \Big] \sum\limits_{k=0}^{n-1}\beta_k = 0_N .
\end{multline*}
\end{pro}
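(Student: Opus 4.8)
The plan is to derive the two stated identities directly from the Sylvester differential system for the second kind functions, in the same way as in the preceding Proposition, and then to recast the first identity by inversion. The only genuinely laborious ingredient is the translation of the polynomial coefficients $p_n^1,p_n^2,q_{n-1}^1$ into the recursion coefficients $\beta_n,\gamma_n$ via telescoping of the three term recurrence.

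First I would record the consequences of the three term recurrence. From $\beta_n^{\mathsf L}=p_{\mathsf L,n}^1-p_{\mathsf L,n+1}^1$ and $p_{\mathsf L,0}^1=0$ we get $p_{\mathsf L,n}^1=-\sum_{k=0}^{n-1}\beta_k^{\mathsf L}$; from the recursion $p_{\mathsf L,n}^2-p_{\mathsf L,n+1}^2=\beta_n^{\mathsf L}p_{\mathsf L,n}^1+C_n^{-1}C_{n-1}$ we obtain $p_{\mathsf L,n}^2=\sum_{0\le k<m\le n-1}\beta_m^{\mathsf L}\beta_k^{\mathsf L}-\sum_{k=0}^{n-1}\gamma_k^{\mathsf L}$, and similarly $q_{\mathsf L,n-1}^1=\sum_{k=0}^{n-1}\beta_k^{\mathsf R}=\sum_{k=0}^{n-1}C_k\beta_k^{\mathsf L}C_k^{-1}$. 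Together with $\gamma_{n+1}=C_{n+1}^{-1}C_n$, these are exactly the substitutions that convert the raw coefficient identities into the stated sums.

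Next, since $h^{\mathsf R}=0_N$, the relevant part of the Sylvester system \eqref{eq:structure_PR} reduces to $C_{n-1}(Q_{n-1}^{\mathsf L})'=-M_{2,1}^n Q_n^{\mathsf L}+M_{2,2}^n C_{n-1}Q_{n-1}^{\mathsf L}$ and $(Q_n^{\mathsf L})'=M_{1,1}^n Q_n^{\mathsf L}-M_{1,2}^n C_{n-1}Q_{n-1}^{\mathsf L}$, with the block entries of $M_n(z)=M_n^0 z^2+M_n^1 z+M_n^2$ as displayed above; here the Pearson equation \eqref{eq:Pearson_altdPI} is precisely what guarantees, through the Proposition computing the structure matrices and \eqref{eq:MSP}, that $M_n$ is a polynomial of degree two with those blocks. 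Inserting $-C_nQ_n^{\mathsf L}(z)=I z^{-n-1}+q_n^1 z^{-n-2}+\cdots$ and collecting the coefficient of $z^{-n-1}$ in the first equation yields the first relation, and in the second equation the second relation; this is exactly the content of the preceding Proposition, i.e.\ \eqref{eq:alt-dPI_1} and \eqref{eq:alt-dPI_2}, which I would simply invoke.

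Finally, for the reformulation, the factor $\mu+\big[\nu,\sum_{k=0}^{n-1}\beta_k\big]+\nu(\beta_n+\beta_{n+1})$ occurring in \eqref{eq:alt-dPI_1} is invertible: its product with $\gamma_{n+1}=C_{n+1}^{-1}C_n$ equals $-(n+1)I$, which is nonsingular, while $\gamma_{n+1}$ is itself nonsingular because each $C_n$ is; hence one may solve for $\gamma_{n+1}$ and obtain the first display. The second display is \eqref{eq:alt-dPI_2}. The main obstacle, and the only step requiring real work, is the bookkeeping in the third paragraph: rewriting $p_n^1$, $p_n^2$, $q_{n-1}^1$ and the quadratic combinations $\nu(p_n^1)^2-p_n^1\nu\,p_n^1$ and $\nu C_n^{-1}C_{n-1}$ hidden inside $M_{1,1}^n$ in terms of the recursion coefficients, and in particular reorganising the resulting double sum so that the commutator term emerges as $\big[\nu,\sum_{m=1}^{n-1}\gamma_m-\sum_{0\le k<m\le n-1}\beta_m\beta_k\big]$; this demands care with the ordering in the products $\beta_m\beta_k$ because of non-commutativity, but is otherwise routine.
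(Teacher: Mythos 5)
Your proposal is correct and follows the paper's own route: both simply invoke the preceding proposition (equations~\eqref{eq:alt-dPI_1} and~\eqref{eq:alt-dPI_2}) and then solve the first of these for $\gamma_{n+1}$ by inverting the bracketed factor, the second display being \eqref{eq:alt-dPI_2} verbatim. Your added remark that invertibility of that factor follows from the nonsingularity of $\gamma_{n+1}=C_{n+1}^{-1}C_n$ together with $-(n+1)I$ being nonsingular is a small but welcome refinement that the paper leaves implicit.
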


\begin{proof}
From~\eqref{eq:alt-dPI_1} we get the $\gamma_{n}$ in terms of $\beta_n$, 
plugged this relation into the second one gives the following nonlinear equation for the matrices $\beta_n$.
\end{proof}

If we assume that $\nu =-I$ as expected strong simplifications occur.
In the first place we find that
\begin{align*}
 \gamma_{n+1}=-(n+1)(\mu - \beta_n - \beta_{n+1})^{-1} ,
\end{align*}
and, secondly, we derive the following simplified version of a non-Abelian alt-dPI equation
 \begin{align*}
\lambda - \beta_n^2
+ n (\beta - \beta_{n-1} +\beta_{n})^{-1}
+ (n+1)(\mu - \beta_n - \beta_{n+1})^{-1} 
- \mu \beta_n
=-\Big[ \mu , \sum\limits_{k=0}^{n-1}\beta_k \Big] \big(I_N+ \beta_n\big) .
 \end{align*}
Moreover, when we choose $\nu=-I$ and $\mu=0_N$ the non local terms disappear and the equation simplifies further to 
\begin{align*}
  - n(\beta_{n-1} + \beta_{n} )^{-1}
 -(n+1)(\beta_n + \beta_{n+1} )^{-1}
 +\beta_n^2=\lambda.
\end{align*}
Let us remind the reader how the alt-dPI equation appeared for the first time. Going back to the scalar context, in Magnus' work~\cite{magnus:_1}, associated with the weight functions solution of the Pearson equation
$ W^{\prime} (z) = \big( z^2 + t \big) W (z)$, we can find the following {scalar alternate discrete Painlevé I } system
\begin{align*}
\gamma_n + \gamma_{n+1} + \beta_n^2 + t &= 0, \\
n + \gamma_n \, \big( \beta_n + \beta_{n-1} \big) &= 0,
\end{align*}
which can be written as
\begin{align*}
{-\frac{n}{\beta_n+\beta_{n-1}}-\frac{n+1}{\beta_n+\beta_{n+1}}+\beta_n^2+t=0}.
\end{align*}

\subsection{The matrix dPI system}

We now increase further the degree of the polynomials appearing in the Pearson equations. We consider the case with $\max(h^{\mathsf L}_n (z) ,h^{\mathsf R}_n (z) )=3$, but we perform a strong simplification  
we take $h^\mathsf R=0_N$ and $ h^\mathsf L= \mu z + \nu z^3 $, with $\mu,\nu\in\C^{N\times N}$ arbitrary matrices but for $\nu$ being negative definite nonsingular matrix. Now we take $\gamma=\R$. Observe that we have non taken the more general possible polynomial of degree three, but an odd one, with well defined parity on $z$, this simplifies widely the computations.

The associated Pearson type equation for a matrix of weights of Freud type:
\begin{align}\label{eq:Pearson-dPI}
 W^{\prime} (z) = (\mu z + \nu z^3 ) W(z)
\end{align}
The struture matrix, cf.~\eqref{eq:MSP}, is a third order polynomial, that we write as follows 
\begin{align*}
 M_n (z) = M_n^0 z^3 + M_n^1 z^2 + M_n^2 z + M_n^3
\end{align*}
with 
\begin{align*}
M_n^0 &= 
\begin{bmatrix} 
\nu &0_N\\ 0_N& 0_N
\end{bmatrix} \, ,
 & 
M_n^1 &= 
\begin{bmatrix} 
0_N & \mu \, C_n^{-1} \\ - C_{n-1} \mu & 0_N 
\end{bmatrix} \, , \\
M_n^2 &= 
\begin{bmatrix} 
\nu + [p_n^2 , \nu ] + \mu C_n^{-1} C_{n-1} & 0_N \\
0_N & -C_{n-1} \nu C_n^{-1}
\end{bmatrix} \, , 
 &
M_n^3 &= 
\begin{bmatrix} 
0_N & \xi_n 
C_n^{-1} \\
- C_{n-1} \xi_{n-1}
 & 0_N
\end{bmatrix} 
\, , 
\end{align*}
where $\xi_n = \mu + [ p_n^2 , \nu ] + \nu ( C_n^{-1} C_{n-1} + C_{n+1}^{-1} C_{n})$, $n \in \mathbb N$.

With this at hand we find.
\begin{pro}[Matrix dPI equation]
The recursion coefficients $\gamma_n$ of the matrix orthogonal polynomials with matrix of weights satisfying the Pearson equation~\eqref{eq:Pearson-dPI} fulfill the following non-Abelian dPI equation
\begin{align*}
 \Big( \mu + \nu ( \gamma_{n+2} + \gamma_{n+1}+\gamma_{n}) + \big[ \nu, \sum_{k=1}^{n-1} \gamma_k \big] \Big) \gamma_{n+1} 
 & = -(n+1) I  , & n \in \mathbb N .
\end{align*}
\end{pro}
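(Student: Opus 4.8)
The plan is to imitate the proof of the matrix alt-dPI system above, now feeding in the cubic structure matrix. The first thing I would observe is that the Pearson equation~\eqref{eq:Pearson-dPI} is invariant under $z\mapsto-z$, so the matrix of weights is even; consequently $\beta_n=0_N$ for all $n$ and the left polynomials and their second kind functions acquire a definite parity, which forces $p^1_{\mathsf L,n}=q^1_{\mathsf L,n}=0_N$ and the vanishing of all odd-labelled coefficients in the expansions near $\infty$. This is precisely the regime in which the displayed formulas for $M_n(z)=M_n^0z^3+M_n^1z^2+M_n^2z+M_n^3$ are valid, and from them I would record the two blocks needed below, namely $M^{\mathsf L}_{2,1,n}(z)=-C_{n-1}\nu z^2-C_{n-1}\xi_{n-1}$ and $M^{\mathsf L}_{2,2,n}(z)=-C_{n-1}\nu C_n^{-1}z$, with $\xi_{n-1}=\mu+[p^2_{\mathsf L,n-1},\nu]+\nu(C_{n-1}^{-1}C_{n-2}+C_n^{-1}C_{n-1})$.

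Next I would specialise the second line of the Sylvester differential system~\eqref{eq:structure_PR} to $h^{\mathsf R}=0_N$ and multiply it on the left by $C_{n-1}$, obtaining
\begin{gather*}
C_{n-1}\big(Q^{\mathsf L}_{n-1}\big)^{\prime}(z)=-M^{\mathsf L}_{2,1,n}(z)\,Q^{\mathsf L}_n(z)+M^{\mathsf L}_{2,2,n}(z)\,C_{n-1}Q^{\mathsf L}_{n-1}(z),
\end{gather*}
insert the asymptotic expansions~\eqref{eq:secondkindlaurent} (with the odd coefficients set to zero), and extract the coefficient of $z^{-n-1}$. Since $M^{\mathsf L}_{2,1,n}$ contributes only through its $z^2$ and constant terms and $M^{\mathsf L}_{2,2,n}$ only through its linear term, the identity collapses to the manageable form
\begin{gather*}
nI=-C_{n-1}\nu C_n^{-1}\big(q^2_{\mathsf L,n}-q^2_{\mathsf L,n-1}\big)-C_{n-1}\xi_{n-1}C_n^{-1}.
\end{gather*}

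To finish I would invoke the telescoping relations recorded after the asymptotics of $S^{\mathsf L}_n$: from $p^2_{\mathsf L,n}-p^2_{\mathsf L,n+1}=C_n^{-1}C_{n-1}=\gamma_n$ with $p^2_{\mathsf L,1}=0_N$ one gets $p^2_{\mathsf L,n}=-\sum_{k=1}^{n-1}\gamma_k$, hence $[p^2_{\mathsf L,n-1},\nu]=[\nu,\sum_{k=1}^{n-2}\gamma_k]$; and from $q^2_{\mathsf L,n}-q^2_{\mathsf L,n-1}=\beta^{\mathsf R}_nq^1_{\mathsf L,n}+C_nC_{n+1}^{-1}=C_nC_{n+1}^{-1}$ the first term above equals $-C_{n-1}\nu C_{n+1}^{-1}$. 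Multiplying the identity on the left by $C_{n-1}^{-1}$ and on the right by $C_n$, and using $C_{n-1}^{-1}C_{n-2}=\gamma_{n-1}$, $C_n^{-1}C_{n-1}=\gamma_n$, $C_{n+1}^{-1}C_n=\gamma_{n+1}$, $C_{n-1}^{-1}C_n=\gamma_n^{-1}$, it rearranges to
\begin{gather*}
\Big(\mu+\nu(\gamma_{n-1}+\gamma_n+\gamma_{n+1})+\big[\nu,{\textstyle\sum_{k=1}^{n-2}}\gamma_k\big]\Big)\gamma_n=-nI,
\end{gather*}
and the relabelling $n\mapsto n+1$ is the asserted equation. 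The substantive points are, first, the parity argument legitimising $\beta_n=0_N$ and the simplified form of $M_n(z)$, and, above all, the bookkeeping in this last step: tracking the telescoped sums and the noncommutative left/right multiplications by the $C_n$ so that the outcome reassembles exactly into $\gamma_k=C_k^{-1}C_{k-1}$ and the stated non-local commutator term.
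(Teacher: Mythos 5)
Your proof is correct and follows the same route as the paper's (one-line) argument: extract the coefficient of $z^{-n-1}$ from the second row of the Sylvester system for the second kind functions, using the parity of the weight (hence $\beta_n=0_N$, $p^1_{\mathsf L,n}=q^1_{\mathsf L,n}=0_N$) and the telescoping relations $p^2_{\mathsf L,n}=-\sum_{k=1}^{n-1}\gamma_k$, $q^2_{\mathsf L,n}-q^2_{\mathsf L,n-1}=C_nC_{n+1}^{-1}$. One remark: you take the $z^2$-coefficient of the $(2,1)$ block of $M_n$ to be $-C_{n-1}\nu$, whereas the paper's displayed $M_n^1$ reads $-C_{n-1}\mu$; a direct computation from~\eqref{eq:MSP} (the $z^2$ term comes from $[S_n^{(1)},\operatorname{diag}(\nu,0_N)]$) confirms your value, the paper's display having swapped $\mu$ and $\nu$ there, and indeed only with $-C_{n-1}\nu$ does the combination with $M_{2,2,n}$ collapse to the local term $\nu\gamma_{n+1}$ appearing in the stated equation.
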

\begin{proof}
Compare the coefficients of $ z^{-n-1} $ in the ODE for the second kind functions we get directly (without additional computations)
the MdPI equations for the three term relation coefficients of $\big\{ P_n (z)\big\}_{n\in \N_0}$.
\end{proof}

Notice the appearance again of non local terms, that disappear if we take $\nu=-I$ and the matrix dPI reads
\begin{align*}
\gamma_{n+2} & = { n } \gamma_{n}^{-1} - \gamma_{n}-\gamma_{n-1}-\mu, & n \in \mathbb N ,
\end{align*}
which was derived in the matrix context for the first time in~\cite{CM} and the confinement of singularities for this relation was proven in~\cite{CMT,CM}, see also~\cite{GIM}.
 In 1995, Alphonse~P. Magnus~\cite{magnus:_1} for the Freud weight satisfying the Pearson equation $ W^{\prime} (z) = - \big( z^3 + 2t z \big) \, W (z) \, $ presented the following {scalar discrete Painlevé I equation}
\begin{align*}
 {\gamma_{n}\big( \gamma_{n-1} + \gamma_{n}+ \gamma_{n+1} \big) + 2 t \gamma_{n} = n}.
\end{align*}

\end{document}